\documentclass[10pt, reqno]{amsart}

\usepackage{amsmath,amssymb,amsthm,amsfonts,verbatim}
\usepackage{microtype}
\usepackage[all,2cell]{xy}
\usepackage{mathtools}
\usepackage{graphicx}
\usepackage{pinlabel}
\usepackage{hyperref}
\usepackage{mathrsfs}
\usepackage{color}
\usepackage{enumerate}
\usepackage{cite}
\usepackage{subcaption}
\usepackage{tikz}
\usetikzlibrary{cd}
\tikzcdset{every label/.append style = {font = \small}}

\CompileMatrices

\usepackage[top=1.2in,bottom=1.8in,left=1.2in,right=1.2in]{geometry}

\usepackage{hyperref} 
\hypersetup{          
	colorlinks=true, breaklinks, linkcolor=[rgb]{0.15 0 0.8}, filecolor=magenta, urlcolor=blue,
	citecolor=magenta, linktoc=all, }
\usepackage[nameinlink]{cleveref}

\theoremstyle{plain}
\newtheorem{theorem}{Theorem}[section]
\newtheorem{maintheorem}{Theorem}

\newtheorem{maincor}[maintheorem]{Corollary}

\newtheorem{proposition}[theorem]{Proposition}
\newtheorem{lemma}[theorem]{Lemma}

\newtheorem{corollary}[theorem]{Corollary}

\theoremstyle{definition}
\newtheorem{definition}[theorem]{Definition}

\newtheorem{construction}[theorem]{Construction}

\newtheorem{remark}[theorem]{Remark}


\newcommand{\nc}{\newcommand}
\nc{\dmo}{\DeclareMathOperator}

\nc{\Q}{\mathbb{Q}}
\nc{\F}{\mathbb{F}}
\nc{\R}{\mathbb{R}}
\nc{\Z}{\mathbb{Z}}
\nc{\C}{\mathbb{C}}
\nc{\N}{\mathbb{N}}
\nc{\Ell}{\mathcal{L}}
\nc{\M}{\mathcal{M}}
\nc{\K}{\mathcal{K}}
\nc{\I}{\mathcal{I}}
\nc{\T}{\mathcal T}
\nc{\U}{\mathcal U}
\nc{\disk}{\mathbb{D}}
\nc{\hyp}{\mathbb{H}}

\nc{\CP}{\mathbb{CP}}
\nc{\cS}{\mathcal{S}}
\dmo{\Mod}{Mod}
\dmo{\PMod}{PMod}
\dmo{\LMod}{LMod}
\dmo{\Diff}{Diff}
\dmo{\Homeo}{Homeo}
\dmo{\dist}{dist}
\dmo\BDiff{BDiff}
\dmo\SO{SO}
\dmo\Hom{Hom}
\dmo\SL{SL}
\dmo\Sp{Sp}
\dmo\rank{rank}
\dmo\sig{sig}
\dmo\Out{Out}
\dmo\Aut{Aut}
\dmo\Inn{Inn}
\dmo\GL{GL}
\dmo\PSL{PSL}
\dmo\BHomeo{BHomeo}
\dmo\EHomeo{EHomeo}
\dmo\EDiff{EDiff}
\dmo\Disc{Disc}
\nc\Sig{\Sigma}
\dmo\Teich{Teich}
\dmo\Fix{Fix}
\nc{\pair}[1]{\langle #1 \rangle}
\nc{\abs}[1]{\left| #1 \right|}
\nc{\action}{\circlearrowright}
\nc{\norm}[1]{\left | \left | #1 \right | \right |}
\nc{\abcd}[4]{\left(\begin{array}{cc} #1 & #2 \\ #3 & #4 \end{array}\right)}
\nc{\into}\hookrightarrow
\dmo{\Isom}{Isom}
\nc{\normal}{\vartriangleleft}
\dmo{\Vol}{Vol}
\dmo{\im}{Im}
\dmo{\Push}{Push}
\dmo{\Conf}{Conf}
\dmo{\PConf}{PConf}
\dmo{\id}{id}
\dmo{\Jac}{Jac}
\dmo{\Pic}{Pic}
\dmo{\Stab}{Stab}
\dmo{\Arf}{Arf}
\dmo{\End}{End}
\dmo{\Gal}{Gal}
\dmo{\lcm}{lcm}
\dmo{\ab}{ab}
\dmo{\opp}{op}
\dmo{\SU}{SU}
\dmo{\OT}{\Omega \mathcal{T}}
\dmo{\OM}{\Omega \mathcal{M}}
\dmo{\spin}{spin}
\dmo{\even}{even}
\dmo{\odd}{odd}
\dmo{\comp}{\mathcal{H}}
\dmo{\Mgk}{\mathcal{M}_{g, \underline{\kappa}}}
\dmo{\orb}{orb}
\dmo{\AJ}{AJ}
\dmo{\Ck}{\mathsf{C}(\underline{\kappa})}
\dmo{\Int}{Int}
\dmo{\pr}{pr}
\dmo{\lab}{lab}
\dmo{\Sym}{Sym}

\nc{\Span}[1]{\operatorname{Span}(#1)}

\renewcommand{\epsilon}{\varepsilon}
\renewcommand{\tilde}{\widetilde}
\renewcommand{\le}{\leqslant}
\nc{\coloneq}{\mathrel{\mathop:}\mkern-1.2mu=}
\nc{\margin}[1]{\marginpar{\scriptsize #1}}
\nc{\para}[1]{\medskip\noindent\textbf{#1.}}
\nc{\red}[1]{\textcolor{red}{#1}}
\nc{\blue}[1]{\textcolor{blue}{#1}}
\nc{\proofof}[1]{\noindent {\em Proof (of #1).}}

\nc{\lb}{[}
\nc{\rb}{]}



\newcommand{\Si}{\Sigma}

\newcommand{\ld}{\Lambda_{\calD}}
\newcommand{\calD}{\mathcal{D}}


\title{Vanishing cycles, plane curve singularities, and framed mapping class groups}

\author{Pablo Portilla Cuadrado and Nick Salter}
\email{pablo.portilla@cimat.mx}
\email{nks@math.columbia.edu}
\thanks{NS is supported by NSF Award No. DMS-1703181.}
\thanks{PPC is supported by CONACYT project wtih No. 286447.}
\address{PPC: CIMAT, De Jalisco s/n, Valenciana, 36023 Guanajuato, Gto., Mexico }
\address{NS: Department of Mathematics, Columbia University, 2990 Broadway, New York, NY 10027}
\date{May 13, 2020}

\begin{document}
\maketitle	
\begin{abstract}
Let $f$ be an isolated plane curve singularity with Milnor fiber of genus at least $5$. For all such $f$, we give (a) an intrinsic description of the geometric monodromy group that does not invoke the notion of the versal deformation space, and (b) an easy criterion to decide if a given simple closed curve in the Milnor fiber is a vanishing cycle or not. With the lone exception of singularities of type $A_n$ and $D_n$, we find that both are determined completely by a canonical framing of the Milnor fiber induced by the Hamiltonian vector field associated to $f$. As a corollary we answer a question of Sullivan concerning the injectivity of monodromy groups for all singularities having Milnor fiber of genus at least $7$.
\end{abstract}
	
	\section{Introduction}
	
Let $f: \C^2 \to \C$ denote an isolated plane curve singularity and $\Sigma(f)$ the Milnor fiber over some point. A basic principle in singularity theory is to study $f$ by way of its {\em versal deformation space} $V_f \cong \C^\mu$, the parameter space of all deformations of $f$ up to topological equivalence (see \Cref{subsection:versal}). From this point of view, two of the most basic invariants of $f$ are the set of {\em vanishing cycles} and the {\em geometric monodromy group}. A simple closed curve $c \subset \Sigma(f)$ is a {\em vanishing cycle} if there is some deformation $\tilde f$ of $f$ with $\tilde f^{-1}(0)$ a {\em nodal} curve such that $c$ is contracted to a point when transported to $\tilde f^{-1}(0)$. The {\em geometric monodromy group} $\Gamma_f$ is a subgroup of the mapping class group $\Mod(\Sigma(f))$ of the Milnor fiber. Modulo some technicalities, this is defined as the monodromy group of the ``universal family of Milnor fibers'' parameterized by the complement $V_f \setminus \Disc$ of the ``discriminant locus'' in $V_f$. The two notions are connected by the {\em Picard--Lefschetz formula}, which states that the monodromy around a nodal deformation $\tilde f \in \Disc \subset V_f$ is a {\em Dehn twist} $T_c \in \Mod(\Sigma(f))$ around the vanishing cycle $c$ corresponding to that deformation.
	
	In spite of the fundamental importance of these notions, to date, they have not been completely understood. Wajnryb \cite{wajnryb} has investigated the {\em homological} monodromy group of plane curve singularities, arriving at a nearly complete answer in this approximate setting, and using his result to completely describe the {\em homology classes} supporting vanishing cycles. Using the theory of {\em divides}, A'Campo has given an algorithm which builds an explicit model for the Milnor fiber $\Sigma(f)$ equipped with a ``distinguished basis'' of finitely many vanishing cycles. This allows for a description of $\Gamma_f$ as the group generated by the associated Dehn twists, and the set of vanishing cycles as the orbit of any single one under $\Gamma_f$. This is however still not the end of the story, since (a) for a singularity with large Milnor number, A'Campo's algorithm grows impractical, (b) it gives no intrinsic characterization of $\Gamma_f$ as a subgroup of $\Mod(\Sigma(f))$ and (c) it does not address the {\em decidability problem} for vanishing cycles: given some simple closed curve $c \subset \Sigma(f)$,  A'Campo's result gives no method for determining if $c$ is a vanishing cycle if it is not {\em a priori} represented as such. 
	
	
	We find that unless $f$ is of type $A_n$ or $D_n$, both $\Gamma_f$ and the set of vanishing cycles are governed completely by a {\em framing} of $\Sigma(f)$. Up to isotopy, a framing is specified by a non-vanishing vector field, and when $f$ is an isolated plane curve singularity, the Hamiltonian vector field $\xi_f$ associated to $f$ is a canonical such choice (see \Cref{subsection:canonicalframing}). In the recent paper \cite{strata3}, A. Calderon and the second author developed the basic theory of {\em framed mapping class groups}. By definition, if $S$ is a surface and $\phi$ is a framing, the framed mapping class group $\Mod(S)[\phi]$ is the {\em stabilizer} of $\phi$ under the action of $\Mod(S)$ on the set of isotopy classes of framings of $S$. See \Cref{section:framings} for an introduction to this theory. Our first main result describes the geometric monodromy group as the full stabilizer of the ``Hamiltonian relative framing'' (a {\em relative} framing is an isotopy class of framing rel. boundary; again see \Cref{section:framings} for details). 
		
	\begin{maintheorem}\label[theorem]{theorem:main}
		Let $f: \C^2 \to \C$ be an isolated plane curve singularity. Suppose that $\Sigma(f)$ has genus $g \ge 5$. If $f$ is not of type $A_n$ or $D_n$, then
		\[
		\Gamma_f = \Mod(\Sigma(f))[\phi],
		\]
		where $\phi$ is the relative framing of $\Sigma(f)$ determined by the Hamiltonian vector field $\xi_f$. 
	\end{maintheorem}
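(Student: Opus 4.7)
The plan is to establish the two containments $\Gamma_f \subseteq \Mod(\Sigma(f))[\phi]$ and $\Mod(\Sigma(f))[\phi] \subseteq \Gamma_f$ separately, with the second being the substantive direction.

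For $\Gamma_f \subseteq \Mod(\Sigma(f))[\phi]$, the Picard--Lefschetz formula tells us that $\Gamma_f$ is generated by Dehn twists $T_c$ around vanishing cycles. A Dehn twist $T_c$ lies in $\Mod(\Sigma(f))[\phi]$ precisely when $c$ has winding number zero with respect to $\phi$, so it suffices to check that every vanishing cycle has zero winding number against $\phi$. The heuristic reason is that, in a local Lefschetz model around a nodal degeneration, a vanishing cycle is realized by a small round equator of $\{\abs{z}^2 - \abs{w}^2 = \varepsilon\}$, and one can compute directly from the local Hamiltonian model that this equator is null-homotopic in the unit tangent bundle of a disk neighborhood. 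One would still need to verify that this local computation assembles coherently into the global Hamiltonian framing $\phi$ on $\Sigma(f)$.

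For the reverse containment, the plan is to invoke the main generation theorem of Calderon and the second author from \cite{strata3}, which states (roughly) that on a surface of sufficiently large genus, the stabilizer in $\Mod(S)$ of a framing is generated by Dehn twists along simple closed curves of winding number zero whose intersection pattern realizes a suitably rich configuration (an $E$-arboreal spanning network, or similar). It therefore suffices to exhibit inside $\Sigma(f)$ a collection of vanishing cycles whose intersection graph realizes this configuration and which is compatible with the framing $\phi$. The natural supply is A'Campo's theory of divides, which produces an explicit distinguished basis of vanishing cycles for any isolated plane curve singularity, together with a combinatorial description of their intersections.

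The main obstacle I expect is to establish, uniformly over all singularities not of type $A_n$ or $D_n$, that A'Campo's distinguished basis contains a sub-collection realizing the admissibility hypothesis of the generation theorem. The exclusion of $A_n$ and $D_n$ is natural at this level: on the Dynkin-diagram level, neither contains a subdiagram of $E$-type, reflecting a combinatorial poverty of their divides that obstructs the needed configuration (and one expects that for these two families the stabilizer $\Mod(\Sigma(f))[\phi]$ is strictly larger than $\Gamma_f$, so that an additional invariant is required). For the exceptional singularities $E_6, E_7, E_8$ and other small cases, one should argue by direct inspection of the divide; for the general non-ADE case, a structural argument is needed, presumably using the Newton polygon or the resolution graph, to guarantee that an $E$-type sub-configuration is always present. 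A secondary subtlety to track carefully is that the winding numbers of the curves in the chosen configuration must agree with those prescribed by the theorem with respect to the Hamiltonian framing $\phi$, rather than some abstract framing of $\Sigma(f)$; in particular the easy containment of the first paragraph must be invoked curve by curve to certify admissibility.
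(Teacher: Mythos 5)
Your overall strategy coincides with the paper's: the containment $\Gamma_f \le \Mod(\Sigma(f))[\phi]$ is the easy half, and the reverse containment is obtained by feeding a configuration of distinguished vanishing cycles from an A'Campo divide into the generation theorem of \cite{strata3}. But there is a genuine gap in how you propose to invoke that theorem. Its hypothesis (\Cref{theorem:Egens}) is not merely that $\Sigma(f)$ contain an $E$-arboreal configuration of admissible vanishing cycles; it requires an $h$-assemblage of type $E$ (\Cref{definition:assemblage}): a genus-$\ge 5$ core carrying an $E$-arboreal spanning configuration, together with an \emph{ordering} of the remaining curves so that each successive curve meets the subsurface built so far in a single arc (i.e.\ each attachment is a stabilization by a single $1$-handle) and so that the final subsurface is all of $\Sigma(f)$. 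A distinguished vanishing cycle chosen at the wrong moment can meet the current subsurface in two or more arcs (see the curve $a_{v_1}$ in \Cref{figure:legalproof2}), in which case its attachment is not a stabilization and the theorem does not apply. Producing a valid attaching sequence is half of the paper's technical work: the condition ``meets $\Sigma(\mathcal C)$ in one arc'' is translated into planar graph theory on the augmented intersection graph (\Cref{lemma:legal}), and one proves that a legal vertex always exists so the process can be iterated to exhaustion (\Cref{lemma:attaching}). Your sketch does not identify this issue at all.

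Two smaller points. First, for the easy containment the paper does not argue curve-by-curve through a local Lefschetz model; it shows directly (\Cref{lemma:framedmonodromy}) that parallel transport preserves the relative winding number function, by continuity of an integer-valued invariant, taking care to transport a system of legal basepoints so that winding numbers of \emph{arcs}, not just closed curves, are controlled — this is needed because $\phi$ is a framing rel boundary. Admissibility of vanishing cycles is then a corollary (\Cref{corollary:vcadmiss}) rather than an input; your route could be made to work, but the local-to-global step you flag and the rel-boundary bookkeeping are exactly where the care is required. Second, the structural invariant the paper uses to guarantee an $E$-arboreal core is the multiplicity, not the Newton polygon or resolution graph: for multiplicity at least $5$, a divide with an ordinary singularity at the origin can be perturbed so that the distinguished vanishing cycles exhibit a $(1,2,6)$ tripod (\Cref{prop:generating_tree_mult_5}), while multiplicities $3$ and $4$ are handled by case analysis using admissible isotopies of divides and a ``triangle toggling'' change of basis (\Cref{corollary:lowmult}).
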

	For an analysis of the $A_n$ and $D_n$ singularities, see \Cref{theorem:ADVC} discussed just below. We remark that the restriction $g \ge 5$ is an artifact inherited from \cite{strata3};  there are exactly six topological types of singularity not addressed by \Cref{theorem:main} or \Cref{theorem:ADVC}. 
	
	As a corollary, we obtain the following precise characterization of which simple closed curves on the Milnor fiber are vanishing cycles. To describe this, we observe that the framing $\phi$ gives rise to a ``winding number function'' on simple closed curves: if $c \subset \Sigma(f)$ is an oriented, $C^1$-embedded simple closed curve, then one measures the winding number of the forward-pointing tangent vector of $c$ relative to $\xi_f$ (see \Cref{subsection:framings}). We say that a simple closed curve $c$ is {\em admissible} for $\phi$ if this winding number is zero.
	
	\begin{maintheorem}\label[theorem]{theorem:VC}
		Let $f: \C^2 \to \C$ be an isolated plane curve singularity. Suppose that $\Sigma(f)$ has genus $g \ge 5$ and that $f$ is not of type $A_n$ or $D_n$. Then a nonseparating simple closed $c \subset \Sigma(f)$ is a vanishing cycle if and only if $c$ is admissible for $\phi$.
	\end{maintheorem}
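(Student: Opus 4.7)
The plan is to combine Theorem~\ref{theorem:main} with two orbit-transitivity statements: (i) the set of vanishing cycles in $\Sigma(f)$ forms a single orbit under the action of $\Gamma_f$, and (ii) for $g \ge 5$, the set of isotopy classes of nonseparating admissible simple closed curves forms a single orbit under the action of $\Mod(\Sigma(f))[\phi]$. Granting (i) and (ii), the theorem reduces to producing a single simple closed curve $c_0$ that is simultaneously a vanishing cycle and admissible: then by Theorem~\ref{theorem:main} the $\Gamma_f$-orbit of $c_0$ and the $\Mod(\Sigma(f))[\phi]$-orbit of $c_0$ must coincide, identifying the set of vanishing cycles with the set of nonseparating admissible simple closed curves.

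Statement (i) is classical and follows from A'Campo's theory of divides: any vanishing cycle can be completed to a distinguished basis, and the action of $\Gamma_f$ realizes all permutations of such a basis via the braid-type relations implicit in the construction. Statement (ii) is essentially a change-of-coordinates principle for framed mapping class groups developed in \cite{strata3}; since the proof of Theorem~\ref{theorem:main} already rests on this framework, it is natural to appeal to the same underlying machinery here. With both facts in hand, the logical skeleton of the proof becomes almost tautological, modulo the identification of a single distinguished admissible vanishing cycle.

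The main obstacle is therefore the existence of an admissible vanishing cycle $c_0$. My plan is to argue this using A'Campo's explicit divide model for $\Sigma(f)$: given a real morsification of $f$, each element of the distinguished basis of vanishing cycles is described combinatorially in terms of the divide, and the Hamiltonian vector field $\xi_f$ can be tracked in these coordinates, so that the winding number of a given basis element reduces to a combinatorial quantity which one can verify vanishes for at least one curve. Alternatively, one could argue more abstractly: a vanishing cycle bounds a Lagrangian thimble in the total space of a nodal degeneration of $f$, and $\xi_f$ extends across a neighborhood of this thimble up to controlled perturbation, forcing the winding number along $c_0$ to vanish. Either route reduces the global claim to a local computation, after which the two orbit-transitivity principles do the rest of the work.
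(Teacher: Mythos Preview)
Your outline is essentially the same as the paper's: reduce to Theorem~\ref{theorem:main}, use the classical fact that vanishing cycles form a single $\Gamma_f$-orbit, use transitivity of $\Mod(\Sigma(f))[\phi]$ on nonseparating admissible curves, and intersect the two orbits. The paper establishes the transitivity (your (ii)) not by a direct citation to \cite{strata3} but by invoking a chain-connectivity lemma for admissible curves (any two such curves are joined by a chain $a=a_1,\dots,a_k=b$ with $i(a_i,a_{i+1})=1$, so the product of the $T_{a_i}T_{a_{i+1}}$ carries $a$ to $b$); this amounts to the same thing.

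Where you go astray is in locating the ``main obstacle.'' You propose to verify that \emph{some} vanishing cycle is admissible by either a combinatorial winding-number computation in the divide model or a Lagrangian-thimble argument. Neither is needed, and the paper does neither: \Cref{corollary:vcadmiss} already shows that \emph{every} vanishing cycle is admissible, as an immediate consequence of \Cref{lemma:framedmonodromy} (the geometric monodromy group preserves the relative framing $\phi$) together with the twist-linearity formula (\Cref{remark:admissible}): if $T_c \in \Gamma_f \le \Mod(\Sigma(f))[\phi]$ with $c$ nonseparating, then $c$ must be admissible. This simultaneously gives the forward direction of the theorem and supplies the common orbit representative you were seeking, with no local computation at all. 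Your proposed route would presumably work, but it replaces a two-line argument with something much heavier.
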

	
\para{Effectiveness} We emphasize that \Cref{theorem:main,theorem:VC} give effective and simple procedures to determine (a) if a mapping class $f \in \Mod(\Sigma(f))$ arises in the geometric monodromy group and (b) if a simple closed curve $c \subset \Sigma(f)$ is a vanishing cycle. Both of these can be determined from the winding number function $\phi$ mentioned above. The winding number function determines the isotopy class of the framing, and so to check if $f \in \Mod(\Sigma(f))[\phi]$, it suffices to check that $f \cdot \phi = \phi$. This is effectively computable: $\phi$ is in fact nothing more than an element of $H^1(UT\Sigma(f);\Z)$ (here, $UT$ denotes the unit tangent bundle), and so is determined by its values on finitely many curves. Likewise, checking if $c \subset \Sigma(f)$ is a vanishing cycle is extremely simple: one merely computes the winding number, which amounts to expressing $c$ equipped with its forward tangent vector as a cycle in $H_1(UT\Sigma(f);\Z)$, and evaluating $\phi$ on this. From a more analytic point of view, this is equivalent to computing the line integral mentioned above.

\para{Non-injectivity of the monodromy} A question of Sullivan (as recorded by A'Campo \cite{NorGI}) asks when the monodromy representation for an isolated plane curve singularity is {\em faithful}. Work of Perron--Vannier \cite{PV} establishes that this is the case for singularities of type $A$ and $D$, while Wajnryb \cite{Waj2} showed that this is {\em false} for the singularities $E_6,E_7, E_8$ (we should mention here also the work of Labruere \cite{Lab}, which studies a similar non-injectivity phenomenon for ``mapping class group representations'' of abstract Artin groups). Following Wajnryb's work, interest in the injectivity question for general singularities appears to have subsided. As a corollary of \Cref{theorem:main}, we find that {\em only} the singularities of type $A$ and $D$ have injective monodromy representations when the genus of the Milnor fiber is at least $7$.

\begin{maincor}\label{corollary:noninj}
Let $f: \C^2 \to \C$ be an isolated plane curve singularity. Suppose that $\Sigma(f)$ has genus $g \ge 7$ and that $f$ is not of type $A_n$ or $D_n$. Let $F:\C^2 \oplus V_f \to \C$ be a representative of the versal deformation of $f$ where $V_f \subset \C^\mu$ is a small ball in the base space of the versal deformation of $f$. Let $\Disc \subset V_f$ denote the discriminant locus, that is the points $v \in V_f$ such that $0$ is a critical value of $F(\cdot, v)$ in a small ball $B_\epsilon \subset \C^2$. Then the geometric monodromy representation
\[
\rho: V_f \setminus \Disc \to \Mod(\Sigma(f))
\]
is non-injective. 
\end{maincor}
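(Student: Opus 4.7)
The plan is as follows. By Theorem~A, the image of $\rho$ coincides with $\Gamma_f = \Mod(\Sigma(f))[\phi]$, so $\rho$ is already surjective onto its image; non-injectivity of $\rho$ therefore amounts to showing that $\rho$ fails to be a group isomorphism onto $\Gamma_f$. I would establish this by exhibiting a concrete nontrivial element of $\ker \rho$.

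The source $\pi_1(V_f \setminus \Disc)$ is generated by Picard--Lefschetz meridians, each of which $\rho$ sends to a Dehn twist about a vanishing cycle in a distinguished basis (e.g.\ from A'Campo's divide construction). By Theorems~A and~B the vanishing cycles form a single orbit under $\Gamma_f$, so every meridian has the same image in the abelianization of $\Gamma_f$, and the induced map $H_1(V_f \setminus \Disc; \Z) \to H_1(\Gamma_f; \Z)$ is between cyclic groups; hence abelianization alone cannot witness a kernel element. Instead I would exploit the relations among Dehn twists in $\Mod(\Sigma(f))$ that do \emph{not} follow from braid relations --- the lantern, chain, and hyperelliptic-type relations. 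Each of these can be supported entirely on admissible simple closed curves once the surface has sufficient genus, and the hypothesis $g \geq 7$ is precisely enough to embed such a configuration on $\Sigma(f)$. Lifting such a relation through the surjection $\rho$ produces a word $w$ in the meridional generators with $\rho(w) = 1$, which is our candidate kernel element.

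The central difficulty is then to verify that this $w$ is in fact nontrivial in $\pi_1(V_f \setminus \Disc)$. One would argue this using the Artin-type presentation of $\pi_1(V_f \setminus \Disc)$ afforded by the divide, together with a Coxeter-like homomorphism on which $w$ can be seen to have nontrivial image, in direct analogy with Wajnryb's argument in \cite{Waj2} for $E_6, E_7, E_8$. The strengthened hypothesis $g \geq 7$ (as compared with the $g \geq 5$ of Theorem~A) has two roles: it ensures enough topological room on $\Sigma(f)$ to embed a relation-supporting configuration of admissible curves, and it excludes the simple exceptional singularities $E_6, E_7, E_8$ (whose Milnor fibers have genera $3, 3, 4$ respectively) which are already treated in \cite{Waj2}.
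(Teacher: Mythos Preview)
Your proposal contains a genuine error at a crucial juncture. You write that the induced map on abelianizations ``is between cyclic groups; hence abelianization alone cannot witness a kernel element,'' and on this basis you abandon the abelianization approach. But this reasoning is faulty: a surjection of cyclic groups certainly can have nontrivial kernel, and in fact this is exactly what happens here. The point is that $\pi_1(V_f \setminus \Disc)$ has an \emph{infinite} cyclic quotient (the winding number around the discriminant hypersurface, obtained by integrating $d\log D$ for a local defining equation $D$ of $\Disc$), whereas by a result of Randal--Williams \cite[Corollary 3.2]{RW}, for $g \ge 7$ one has $H_1(\Mod(\Sigma(f))[\phi];\Z) \cong \Z/24\Z$. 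A surjection from a group with an infinite cyclic quotient onto a group with finite abelianization cannot be injective. This is precisely the paper's argument, and it is where the hypothesis $g \ge 7$ actually enters --- not for embedding lantern configurations or excluding $E_6, E_7, E_8$.

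Your alternative route via lantern/chain relations is not wrong in spirit, but as written it is incomplete: you produce a word $w$ in meridians with $\rho(w)=1$, and then assert that nontriviality of $w$ in $\pi_1(V_f\setminus\Disc)$ ``would'' follow from an Artin-type presentation and a Coxeter-like homomorphism, ``in analogy with Wajnryb.'' This is the entire content of the claim, and you have not carried it out; for a general singularity there is no off-the-shelf presentation of $\pi_1(V_f\setminus\Disc)$ to which Wajnryb's $E_n$ argument transfers. In fact the simplest way to certify $w\neq 1$ is again via the winding number: a lantern or chain relation is not balanced in the meridional generators (e.g.\ the lantern has four twists on one side and three on the other), so $w$ has nonzero image under the winding-number homomorphism. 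But once you notice this, you are back to the abelianization argument you dismissed.
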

\begin{proof}
By \Cref{theorem:main}, the image of $\rho$ is the framed mapping class group $\Mod(\Sigma(f))[\phi]$. According to \cite[Corollary 3.2]{RW}, $H_1(\Mod(\Sigma(f))[\phi]; \Z) \cong \Z/24\Z$ as long as $\Sigma(f)$ has genus $g \ge 7$. It therefore suffices to show that $\pi_1(V_f \setminus \Disc)$ admits an infinite cyclic quotient. This can be constructed as follows: $\Disc$ is a complex analytic set of complex codimension $1$, so after possibly shrinking $V_f$, we can assume that $\Disc$ is the vanishing locus of a holomorphic function $D: V_f \to \C$. Thus $d \log(D)$ is a holomorphic $1$-form on $V_f \setminus \Disc$ measuring the ``winding number'' around $\Disc$. Integration of $d \log(D)$ provides a quotient $\pi_1(V_f \setminus \Disc) \to 2 \pi i\Z$. Let $\Delta \subset V_f$ be a holomorphic disc intersecting a smooth point of $\Disc$ transversely; then by the residue theorem, $\int_{\partial \Delta}d \log(D) = 2 \pi i$, showing nontriviality.
\end{proof}

\para{The action on relative homology} In \cite{framedMCG}, A. Calderon and the second author determine the action of the framed mapping class group $\Mod(\Sigma(f))[\phi]$ on the {\em relative} homology group $H_1(\Sigma(f), \partial \Sigma(f); \Z)$. Using this result and \Cref{theorem:main}, we can extend the work of Wajnryb \cite{wajnryb} mentioned above and obtain a complete description of the homological monodromy group of a plane curve singularity. In \cite{framedMCG}, the authors construct a {\em crossed homomorphism}

	\[
	\Theta_\phi: \Aut(H_1(\Sigma(f), \partial \Sigma(f);\Z)) \to H^1(\Sigma(f); \Z/2\Z)
	\] 
	that is closely related to the theory of spin structures. By combining \Cref{theorem:main} and \cite[Theorem B]{framedMCG}, we obtain the following corollary.
	
	\begin{corollary}
	Let $f: \C^2 \to \C$ be an isolated plane curve singularity. Suppose that $\Sigma(f)$ has genus $g \ge 5$. If $f$ is not of type $A_n$ or $D_n$, then the ``relative homological monodromy group'' of $f$ is the subgroup $\ker(\Theta_\phi)\le \Aut(H_1(\Sigma(f), \partial \Sigma(f);\Z))$. 
	\end{corollary}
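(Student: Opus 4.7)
The strategy is to show this corollary is essentially a formal consequence of combining the main theorem of the present paper with the external input \cite[Theorem B]{framedMCG}, which is already flagged in the surrounding text. The relative homological monodromy group is, by definition, the image of the geometric monodromy group $\Gamma_f$ under the natural representation $\Mod(\Sigma(f)) \to \Aut(H_1(\Sigma(f), \partial \Sigma(f); \Z))$, so the whole content reduces to identifying this image as $\ker(\Theta_\phi)$.

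I would organize the argument in three short steps. First, unpack the definition: the relative homological monodromy group is the composition of the geometric monodromy representation $\rho: \pi_1(V_f \setminus \Disc) \to \Mod(\Sigma(f))$ with the action of the mapping class group on $H_1(\Sigma(f), \partial \Sigma(f); \Z)$. Its image is therefore the image of $\Gamma_f$ under the latter action. Second, invoke \Cref{theorem:main}: under the hypotheses $g \ge 5$ and $f$ not of type $A_n$ or $D_n$, we have the equality $\Gamma_f = \Mod(\Sigma(f))[\phi]$ of subgroups of $\Mod(\Sigma(f))$, where $\phi$ is the Hamiltonian relative framing. Substituting, the relative homological monodromy group equals the image of $\Mod(\Sigma(f))[\phi]$ acting on $H_1(\Sigma(f), \partial \Sigma(f); \Z)$. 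Third, apply \cite[Theorem B]{framedMCG}, which asserts precisely that this image is $\ker(\Theta_\phi)$; concatenating the three equalities gives the statement.

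There is essentially no obstacle beyond verifying that the hypotheses on $\phi$ needed in \cite[Theorem B]{framedMCG} are met by the Hamiltonian framing. In particular, one should check that the genus bound used there is compatible with (or subsumed by) $g \ge 5$, and that $\phi$ is a relative framing in the sense used in \cite{framedMCG} — both of which are already implicit in the statement and discussion of \Cref{theorem:main}. Since the crossed homomorphism $\Theta_\phi$ is intrinsic to $\phi$, no further computation is required. The corollary is thus a clean two-line deduction once the two main inputs are in place, and the write-up need only cite \Cref{theorem:main} and \cite[Theorem B]{framedMCG} and explain the composition of homomorphisms.
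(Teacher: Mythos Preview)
Your proposal is correct and mirrors exactly what the paper does: the corollary is stated without a separate proof, as an immediate consequence of combining \Cref{theorem:main} (giving $\Gamma_f = \Mod(\Sigma(f))[\phi]$) with \cite[Theorem B]{framedMCG} (identifying the image of $\Mod(\Sigma(f))[\phi]$ on relative homology as $\ker(\Theta_\phi)$). Your three-step unpacking is an accurate expansion of the one-line justification the paper gives in the surrounding text.
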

For a more complete discussion of the crossed homomorphism $\Theta_\phi$ and its kernel, we refer the reader to \cite{framedMCG}.

\para{\boldmath Types $A_n$ and $D_n$: hyperellipticity} Up to this point in the discussion we have been excluding the singularities $A_n$ and $D_n$ from consideration. The need for this can be explained topologically as follows. As can readily be seen from the equations, the Milnor fibers for $A_n$ and $D_n$ are {\em hyperelliptic} - they admit involutions $\iota: \Sigma(f) \to \Sigma(f)$ that act as $-1$ on homology. Hyperelliptic phenomena in the theory of mapping class groups always require special consideration, but are usually more tractable. We carry out the necessary analysis in \Cref{section:AD}. Although we suspect that these results are probably known to experts, we include them for the sake of completeness and since they do not require too much effort. We find that again the vanishing cycles are characterized by topological data, but in this case, it is the involution $\iota$, not the framing, which exerts control. To formulate the result, we observe that there is a canonical map of Milnor fibers $p: \Sigma(D_n) \to \Sigma(A_{n-1})$ obtained by capping off one of the boundary components (see \Cref{section:AD}).\\

\noindent \Cref{theorem:ADVC} {\em A non-separating simple closed curve $c \subset \Sigma(A_n)$ is a vanishing cycle if and only if $\iota(c)$ is isotopic to $c$. Likewise, such $c \subset \Sigma(D_n)$ is a vanishing cycle if and only if $p(c)$ is isotopic to $\iota(p(c))$.}\\

\para{Outline of the argument} In light of the Picard--Lefschetz formula, the geometric monodromy group encodes the information of which curves are vanishing cycles; consequently \Cref{theorem:VC} is a straightforward corollary of \Cref{theorem:main}. A first crucial observation in the direction of \Cref{theorem:main} is that the presence of the Hamiltonian vector field $\xi_f$ constrains the geometric monodromy to lie in the associated framed mapping class group. We make the necessary remarks to this effect in \Cref{lemma:framedmonodromy}. Following this, the basic approach to \Cref{theorem:main} is to follow A'Campo's algorithm mentioned above for constructing a model of the Milnor fiber equipped with a distinguished basis of vanishing cycles out of the data of a divide. The theory of framed mapping class groups as developed in \cite{strata3} provides a set of tools to show that a finite collection of Dehn twists (necessarily about {\em admissible} curves) generates the associated framed mapping class group. 

Surfaces with framings that occur ``in nature'' can have a vast diversity of appearances (e.g. as Seifert surfaces for a broad array of knots and links). The work of \cite{strata3} provides generating sets for framed mapping class groups that are effectively ``coordinate free'' in that they do not require one to implicitly rely on an identification of the surface in question with a fixed ``reference'' surface. Rather, \cite{strata3} gives a {\em criterion} for a collection of Dehn twists to generate the framed mapping class group, formulated in terms of the notion of an {\em assemblage} (\Cref{definition:assemblage}). 

An assemblage is an ordered collection of simple closed curves; in particular, it determines a topological filtration $\dots  \subset S_j \subset S_{j+1} \subset \dots$ of the surface by taking regular neighborhoods of increasing subcollections. The {\em core} $S_0$ of an assemblage is required to be built on a subconfiguration $\mathcal C_0$ with some special properties (it must be an ``$E$-arboreal spanning configuration'' of genus $h \ge 5$, c.f. \Cref{definition:config}). Subsequent surfaces $S_{j+1}$ are obtained from $S_j$ by {\em stabilization}: attaching a single $1$-handle. Thus the corresponding curve $c_{j+1}$ is required to enter and exit $S_j$ exactly once, but no further constraints on the intersection of $c_{j+1}$ with curves $c_i (i \le j)$ are imposed. Theorem B of \cite{strata3}, recorded here as \Cref{theorem:Egens}, states that {\em any} assemblage built by repeated stabilization on an $E$-arboreal spanning configuration of genus $h \ge 5$, generates the corresponding framed mapping class group (the framing is characterized by the condition that all curves in the assemblage be admissible).

To prove \Cref{theorem:main}, we must therefore give a description of the Milnor fiber $\Sigma(f)$ as an assemblage of vanishing cycles. Our starting point is A'Campo's model for $\Sigma(f)$ constructed from the data of a {\em divide} of the singularity (c.f. \Cref{section:divides}). This depicts $\Sigma(f)$ as the Seifert surface for a link, and furthermore equips $\Sigma(f)$ with a finite collection $\mathcal C = \{c_1,\dots, c_\mu\}$ of distinguished vanishing cycles. The configuration $\mathcal C$ is not yet an assemblage: there is no specified sequence of attachments, and no core on which $\mathcal C$ restricts to an $E$-arboreal spanning configuration. Solving these two problems forms the heart of the paper.

Construction of the core subsurface takes place in \Cref{section:base,sec:sporadic}. In favorable circumstances (technically, when the multiplicity\footnote{Recall that the {\em multiplicity} of a singularity is defined as the multiplicity of the exceptional divisor appearing in the total transform of the singular fiber under a blowup at the origin. More concretely, the multiplicity is the minimum degree of a monomial appearing in the equation defining $f$.} of the singularity is at least $5$), one can construct a divide for which there is an evident $E$-arboreal spanning configuration consisting entirely of distinguished vanishing cycles. This makes use of the theory of ``divides with ordinary singularities'' as elaborated in Castellini's thesis \cite{Cast}. We discuss this in \Cref{section:base}. The low-multiplicity setting is substantially harder to analyze, and is postponed to the final \Cref{sec:sporadic}. Here, the core subsurfaces are still regular neighborhoods of subsets of distinguished vanishing cycles, but the curves forming the $E$-arboreal spanning configuration are obtained by further manipulating these curves by Dehn twists. 

The remaining problem is to specify a sequence of attachments of the remaining vanishing cycles that is a stabilization (handle attachment) at each step. We treat this in \Cref{section:stabilize}, based on an analysis of the intersection diagram of vanishing cycles associated to the divide. We find that the intersection properties of a distinguished curve relative to the subsurface spanned by some other collection is expressible entirely in terms of the planar graph theory of the intersection diagram. This implies the existence of an ordering on the distinguished vanishing cycles such that the result forms an assemblage on a suitable core. 

As a final complement to these results, we explain how to understand vanishing cycles for the $A_n$ and $D_n$ singularities in \Cref{section:AD}, culminating in \Cref{theorem:ADVC}.

The sections \Cref{section:singularity,section:divides,section:framings} provide the necessary background on singularity theory, the theory of divides, and framed mapping class groups, respectively. 	

\para{Acknowledgements} The second author would like to extend his deep gratitude to Xavier G\'omez Mont for extending an invitation to CIMAT, where this project was begun, for his wonderful hospitality during the visit, and for a very useful discussion of the Hamiltonian vector field of a singularity. Both authors would like to thank CIMAT for hosting a very productive working environment. They are also grateful to Norbert A'Campo for his input on a draft.
	
	\section{Singularity theory}\label{section:singularity}
	
	Let $f:\C^2 \to \C$ be a complex analytic map with an isolated singularity at the origin. That is, $f(0,0) = 0$ and for $\norm{(x,y)}$ suitably small, the partial derivatives of $f$ vanish simultaneously only at $(x,y) = (0,0)$. In this section we collect various results, most of them classical, regarding the theory of plane curve singularities. 
	
	\subsection{Basic notions}
	
	The algebra of convergent power series $\C\{x,y\}$ is a unique factorization domain and so, up to multiplication by a unit, $f$ can be uniquely expressed as $f=f_1\cdot \cdots \cdot f_b$ where each $f_i$ is an irreducible convergent power series. Each $f_i$ is called a {\em branch} of $f$ and if $b=1$ we say that $f$ is {\em irreducible}.
	
	\para{The Milnor fibration} In \cite{Mil}, Milnor proved that for a holomorphic map $f:\C^n \to \C$ with an isolated singularity at the origin, $$\frac{f}{|f|}: S^{2n-1}_\epsilon \setminus K \to S^1$$ is a locally trivial fibration. Here $S_\epsilon^{2n-1} \subset \C^n$ denotes a suitably small sphere and the {\em link} $K$ is defined by $K:= f^{-1}(0) \cap  S_\epsilon^{2n-1}$. If we denote by $D_\delta \subset \C$ a small disk of radius $\delta$ centered at $0$ and by $B_\epsilon \subset \C^n$ a ball of radius $B_\epsilon$, it  follows from Ehresmann's fibration lemma that
	\begin{equation}\label{eq:milnor_fibration}
	f_{|_{f^{-1}(\partial D_\delta) \cap B_\epsilon}}: f^{-1}(\partial D_\delta) \cap B_\epsilon \to \partial D_\delta
	\end{equation} is also a locally trivial fibration for $\epsilon$ small enough and $\delta$ small with respect to $\epsilon$. Moreover, Milnor proved in \cite{Mil} that these fibrations are equivalent. For the matter of the present work, we consider the second fibration, specialized to the case $n = 2$.
		
	Fix a fiber of $f_{|_{f^{-1}(\partial D_\delta) \cap B_\epsilon}}$ over any point of $\partial D_\delta$. We denote this fiber by $\Si(f)$ and we call it {\em the Milnor fiber} of $f$. It is a connected oriented compact surface with non-empty boundary. The Milnor fiber has $b$ boundary components, where $b$ is the number of branches of $f$. Its first Betti number $b_1(\Si(f))$ coincides with $\dim_\C \C\{x,y\}/(\partial f/ \partial x, \partial f/ \partial y)$ and any of these quantities is called {\em the Milnor number of } $f$. This will be denoted $\mu_f$.
	
	\para{Puiseux pairs and intersection multiplicities} There is a vast literature on different collections of complete topological invariants of an isolated plane curve singularity. These are numerical invariants that classify isolated plane curve singularities up to a topological change of coordinates in $\C^2$. We recommend \cite{Bri} as an exhaustive and classical reference on the topic of plane algebraic curves. For the purposes of this paper we choose the Puiseux pairs of each branch of $f$ and the pairwise intersection multiplicities of the branches as our complete set of topological invariants. Next, we give a brief summary of these two.
	
	A finite sequence of pairs of numbers $(p_1,q_1),\ldots,(p_k,q_k)$ is a sequence of {\em essential Puiseux pairs} if and only if 
	\begin{equation}\label{eq:puiseux_ineq}
	\begin{split}
	& 2 \leq p_i < q_i,  \\
	& q_i/(p_1p_2\cdots p_ i) < q_{i+1}/(p_1p_2\cdots p_{i+1}) \text{ and }\\
	&\gcd(q_i, p_1\cdots p_i)= 1
	\end{split}
	\end{equation}
	for all $i =1, \ldots,k$. There is a one-to-one correspondence between sequences of essential Puiseux pairs and topological types of {\em irreducible} plane curve singularities.	
	
	There is a second set of numerical invariants that are more suitable for computation: {\em Newton pairs}. These are a finite sequence of coprime numbers $(p_i, \lambda_i)$ that can be computed from the Puiseux pairs by the recursive formula \begin{equation}\label{eq:newton_pairs}
	\begin{split}
	& \lambda_1:=q_1,  \\
	& \lambda_{i+1}:= q_{i+1} -q_i p_{i+1} + \lambda_i p_{i+1} p_ i 
	\end{split}
	\end{equation}
	
	Newton pairs provide a closed formula for the Milnor number of an irreducible plane curve singularity. Defining $p_{k+1}:=1$, the formula reads
	\begin{equation}\label{eq:milnor_number_irr}
	\mu_f = \sum_{i=1}^k \left( \prod_{j=i}^{k+1}p_{j+1}\right)(p_i-1)(\lambda_i-1).
	\end{equation}
	The above formula follows easily from the construction of the Milnor fiber given in \cite{Nor3}.
	
	Now suppose that $f= f_1\cdots f_b$ is a reducible singularity. Here, more data than the Puiseux or Newton pairs of each branch is required to specify the topological type. One approach is via {\em intersection multiplicities}. For each pair $i,j \in \{1,\ldots,b\}, i \neq j$, we define the {\em intersection multiplicity} between the branches $f_i$ and $f_j$ as $$\nu_{ij}:=\dim_{\C}\C \{x,y\}/(f_i,f_j).$$ This is the usual algebro-geometric definition of local intersection multiplicity, i.e. the dimension as a $\C$ vector space of the local ring at $0$ modulo the ideal generated by $f_i$ and $f_j$.
	
	It follows from \cite[Theorem 10.5]{Mil} and Remark 10.10 therein, that a formula for $\mu_f$ can be given in terms of the Milnor numbers of each branch, the intersection multiplicities and the number of branches. Denoting by $\mu_i$ the Milnor number of the branch $f_i$,  the formula is given by:
	\begin{equation}\label{eq:milnor_total}
	\mu_f = \sum_{i=1}^b\mu_i + 2\sum_{i<j}\nu_{ij} - b + 1.
	\end{equation}
	
	\subsection{The versal deformation space and the geometric monodromy group}\label{subsection:versal}
	\ 
	
	\para{The versal deformation space} We briefly recall here the notion of the {\em versal deformation space} of an isolated singularity; see \cite[Chapter 3]{ArnII} for more details. Recall the algebra 
	\[
	A_f = \C\{x,y\} / (\partial f/ \partial x, \partial f/\partial y);
	\]
	previously we defined the Milnor number of $f$ as the dimension of $A_f$. Let $g_1, \dots, g_\mu \in \C[x,y]$ project to a basis of $A_f$. For $\lambda = (\lambda_1, \dots, \lambda_\mu) \in \C^\mu$, define the function $f_\lambda$ by
	\[
	f_\lambda = f + \sum_{i = 1}^\mu \lambda_i g_i.
	\]
	The {\em versal deformation space} of $f$ is the collection of functions $f_\lambda$ for $\lambda \in \C^\mu$. The {\em discriminant locus} is the subset
	\[
	\mbox{Disc}  = \{\lambda \in \C^\mu \mid f_\lambda^{-1}(0) \mbox{ is not smooth}\}.
	\]
	It can be shown that $\mbox{Disc}$ is an algebraic hypersurface. The discriminant locus is stratified according to the topological type of the singularity of the corresponding curve; the top-dimensional stratum parameterizes curves with a single node. The tautological family 
	\begin{equation}\label{eq:taut}
	X_f = \{(\lambda, (x,y)) \mid (x,y) \in f_\lambda^{-1}(0),\ \lambda \not \in \mbox{Disc}\}
	\end{equation}
	has the structure of a smooth surface bundle with base $V_f \setminus \mbox{Disc}$. Moreover, by intersecting $X_f$ and  $V_f \setminus \mbox{Disc}$ with closed polydisks small enough, we get that the fibers of this fibration are diffeomorphic to the Milnor fiber $\Si(f)$ of \cref{eq:milnor_fibration}. We fix a point in $V_f \setminus \mbox{Disc}$ and we denote, also by $\Si(f)$, the fiber with boundary lying over it.
	
	\para{The geometric monodromy group} To define the geometric monodromy group, we first recall that the {\em mapping class group} of a surface $S$ with boundary is given as
	\[
	\Mod(S) := \pi_0(\Diff^+(S, \partial S)).
	\]
	That is, a mapping class is an isotopy class of diffeomorphisms of $S$, where the isotopies are required to fix the boundary {\em pointwise}. Now let $f:\C^2 \to \C$ be an isolated plane
	curve singularity with Milnor fiber $\Sigma(f)$. 
	
	\begin{definition}\label{definition:geom_monodromy_group}
		The {\em geometric monodromy group} is the image in $\Mod(\Si(f))$ of the monodromy representation of the universal family $X_f$ of \cref{eq:taut}.
	\end{definition}

	\begin{definition}\label{definition:vanishing_cycle}
	A {\em vanishing cycle}  is a simple closed curve $c \subset \Si(f)$ that gets contracted to a point when transported to the nodal curve lying over a smooth point of $\mbox{Disc}$.
\end{definition}

	For any linear form $\ell:\C^2 \to \C$ generic with respect to $f$ and $\epsilon>0$ small enough, the map $$\tilde{f}:=f+\epsilon \ell: \C^2 \to \C$$ only has Morse-type singularities and the corresponding critical values $c_1, \ldots, c_\mu$ are all distinct and close to $0\in \C$. The holomorphic map $\tilde{f}$ is usually called a {\em morsification} of $f$.
	
%
	
	We observe that \Cref{definition:geom_monodromy_group} only depends on the topological type of $f$ since the versal deformation is a topological invariant of the singularity. It can be proven, however, that the geometric monodromy group can be defined from {\em any} morsification $\tilde{f}$ as the image in $\Mod(\Si(f))$ of the fundamental group of the disk punctured at the critical values of $\tilde{f}$. A proof of this fact
	is contained in the proof of \cite[Theorem 3.1, pg. 70]{ArnII}.

	\section{Divides} \label{section:divides}
	
	In this section we introduce the basic notions about divides that we use throughout the text. We can think of a divide as a combinatorial tool to study all topological invariants of a plane curve singularity. In particular it gives a precise description of the topology of the Milnor fiber and of the geometric monodromy of the singularity.
	
	\subsection{The divide of a singularity}
	\begin{definition}\label{def:divide}
		A {\em divide} $\calD$ is the image of a generic relative immersion of a finite number of copies of the interval $[-1,1]$ in a real disk $B$ such that the only singularities are double crossings and the image of the intervals meets the boundary of the disk transversely. The image of each copy is a {\em component} of the divide and when the divide consists of just one copy of the interval, we say that it is {\em irreducible}.
	\end{definition}

	In the setting of singularity theory, divides arise as the real points of well-chosen perturbations of the singularity.

	\begin{definition}\label{definition:adapted_morse}
		Let $\calD \subset D^2$ be a divide and let $f:D^2 \to \R$ be a real smooth map. We say that $f$ is a {\em Morse function adapted to } $\calD$ if it satisfies that:
		\begin{enumerate}
			\item The divide is precisely the zero set of $f$, i.e: $f^{-1}(0) = \calD$.
			\item The map $f$ has its saddle points exactly at the double crossings
			of $\calD$.
			\item Each interior region of $\calD$ contains exactly one maximum or one minimum of $f$.
			\item The intersection of the closure of two distinct interior regions with a maximum consists of an (possibly empty) union of saddle points.
		\end{enumerate}
	\end{definition}
	
	We observe that the above definition does not imply that $f$ itself is a Morse function in the usual sense. Our definition requires that the Morse critical
	points be isolated, but the critical values need not (indeed, typically {\em will not}) be isolated. 
	
		Let $f:\C^2 \to \C$ be an isolated plane curve singularity. It is a classical result (for example, it follows from the theory of Puiseux expansion series) that every plane curve singularity is topologically equivalent to a {\em real singularity} (i.e. one defined by an equation with real coefficients). Moreover, it was proven, independently by A'Campo \cite{NorGI} and Gussein-Zade \cite{Gus} that every plane curve singularity is equisingular to a real singularity with local real branches.  Let $B \subset \R^2 \subset \C^2$ be a real disk containing $0$. It follows from \cite[Theorem 4.4]{ArnII} for a real singularity with local real branches, there exists a morsification $f_s$ which is real, that is, the map $f_s$ is a real Morse map, all the critical points and the critical values of $f_s$ are real and all the saddle points are contained in $f^{-1}(0) \cap B$. Then it follows that $f_s^{-1}(0) \cap B$ is a divide and $f_s$ is a Morse function adapted to it. 
		
	\begin{remark}\label{rem:extended_version}
	One can always get a divide for a topological type of a given real singularity with real branches if one is willing to change the equation to an equation with the same topological type. In certain occasions one might prefer to stay with a given real equation. In these cases one has to allow in \Cref{def:divide} for some copies of the circle to be immersed as well. For the most of this work the reader may assume that we are dealing with divides as in \Cref{def:divide}. In \Cref{sec:sporadic} there is one moment where we allow divides with immersed circles because of a particularly nice way of producing such divides for certain singularities.
	\end{remark}

	\subsection{Divides as planar graphs}
		Divides can be viewed as planar graphs. We introduce here the basic ingredients in this point of view, including the {\em intersection graph} (\Cref{construction:dual}) associated to a divide which will be ubiquitous in what follows. We will employ the graph-theoretic perspective in the inductive portions of the proof of \Cref{theorem:main} (\Cref{section:stabilize}).

	\begin{definition}[Planar graph, planar dual]
		A {\em planar graph} is a continuous map $p: \Gamma \to \C$ of a finite $1$-dimensional CW complex $\Gamma$ that is a homeomorphism onto its image. We further require that $p$ be a $C^1$ embedding on the interior of each $1$-cell of $\Gamma$. A {\em face} of a planar graph is a connected component of $\C \setminus p(\Gamma)$. When the embedding is implicit, we drop reference to it and speak of a planar graph $\Gamma$, faces of $\Gamma$, etc. 
		
		The {\em planar dual} of a planar graph $p: \Gamma \to \C$ is the planar graph constructed as follows: enumerate the faces of $\Gamma$ as $F_1, \dots, F_n$. For each face $F_i$, choose a point $p_i$ in the interior. Whenever {\em distinct} faces $F_i, F_j$ are separated by an edge $e$, join the corresponding $p_i, p_j$ by a $C^1$-embedded segment that crosses the interior of $e$ transversely and intersects no other point of $p(\Gamma)$. 
		
		The {\em bounded planar dual} of $\Gamma$ is obtained from the planar dual by removing the vertex corresponding to the unbounded face and all adjacent edges.
	\end{definition}
	
	\begin{figure}[ht] 
		\labellist
		\endlabellist
		\includegraphics[scale=1]{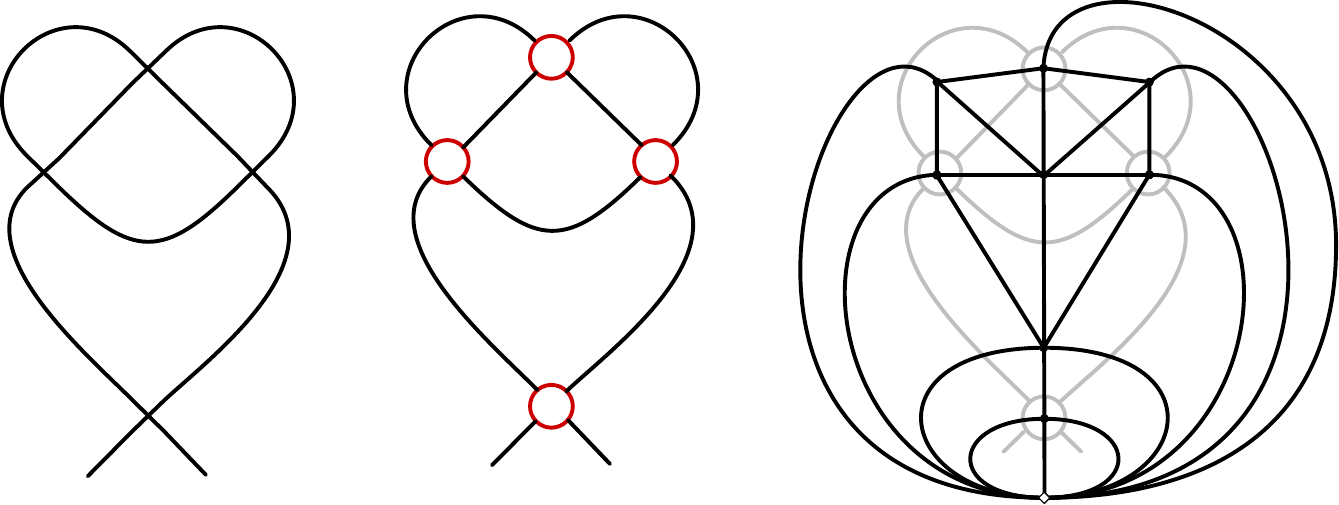}
		\caption{Left to right: the divide $\mathcal D$, the blowup $\tilde{\mathcal D}$, and the augmented intersection graph $\Lambda_{\mathcal D}^+$. The unbounded vertex is the white diamond at the bottom.}
		\label{figure:augmenteddual}
	\end{figure}

	\begin{definition}[(Augmented) intersection graph, (un)bounded vertices]\label{construction:dual}
		Let $\mathcal D$ be a divide. The {\em blowup} $\tilde{\mathcal D}$ of $\mathcal D$ is the planar graph obtained from $\mathcal D$ by replacing each double point in $\mathcal D$ with a small embedded circle. The {\em intersection graph} $\Lambda_{\mathcal D}$ is the bounded planar dual of $\tilde{\mathcal D}$, and the {\em augmented intersection graph} $\Lambda_{\mathcal D}^+$ is the planar dual of $\tilde{\mathcal D}$. We call the vertices of $\Lambda_{\mathcal D} \subset \Lambda_{\mathcal D}^+$ {\em bounded}, and the unique vertex $v_\infty \in \Lambda_{\mathcal D}^+ \setminus \Lambda_{\mathcal D}$ {\em unbounded}. See \Cref{figure:augmenteddual}. 
	\end{definition}
	
	In practice (e.g. in \Cref{figure:divide_o5}), we will not explicitly draw the blowup $\tilde{\mathcal D}$, and instead will pass directly from $\mathcal D$ to $\ld$ by adding a vertex to $\ld$ for each vertex and enclosed region of $\mathcal D$.
	
	The following is a basic property of the augmented divide graph.
	
	\begin{lemma}\label{lemma:dualproperties}
		All bounded faces of $\Lambda_{\mathcal D}^+$ are either triangles or bigons.
	\end{lemma}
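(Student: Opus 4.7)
The plan is to exploit the standard correspondence between bounded faces of the planar dual of a graph and its vertices: a face of the dual surrounding a vertex $v$ is a polygon whose edges correspond to those incident edges of $v$ that separate two distinct faces. Under this correspondence, to prove the lemma it suffices to catalogue the vertices of $\tilde{\mathcal D}$ and verify that each contributes at most three such edges.

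The vertices of $\tilde{\mathcal D}$ come in two types: (i) the endpoints on $\partial B$ where an interval of $\mathcal D$ terminates, and (ii) the four blowup vertices lying on each small circle that replaces a double point of $\mathcal D$. An endpoint is $1$-valent, and since both sides of its unique incident edge lie in the same face of $\tilde{\mathcal D}$, no dual edge emanates from it; such vertices therefore contribute no bounded face of $\Lambda_{\mathcal D}^+$ and are absorbed into the unbounded face.

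The core of the argument is the analysis of blowup vertices. Each such vertex $v$ is $3$-valent, with two incident edges running along the small circle and one outgoing divide edge. Write $F_1$ for the face that is the interior of the small circle, and $F_2, F_3$ for the faces lying on either side of the outgoing edge at $v$. Since $F_1$ is a disk bounded entirely by the small circle, $F_1$ is distinct from both $F_2$ and $F_3$, so the two circle edges always contribute two dual edges, joining $p_{F_1}$ to the vertices corresponding to $F_2$ and $F_3$. If $F_2 \neq F_3$, the outgoing edge contributes a third dual edge and the face of $\Lambda_{\mathcal D}^+$ surrounding $v$ is a triangle with corners $p_{F_1}, p_{F_2}, p_{F_3}$. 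If $F_2 = F_3$, which happens precisely when the outgoing edge is a bridge of $\tilde{\mathcal D}$ (for example when it terminates at an endpoint on $\partial B$), the outgoing edge is not dualized, and the face around $v$ is a bigon bounded by the two parallel dual edges joining $p_{F_1}$ and $p_{F_2}$.

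Since these two cases exhaust the local possibilities at a blowup vertex, and no other vertex of $\tilde{\mathcal D}$ contributes a bounded face, every bounded face of $\Lambda_{\mathcal D}^+$ is a triangle or a bigon. The only real subtlety is handling edges that fail to separate two distinct faces of $\tilde{\mathcal D}$, which is exactly the mechanism producing the bigon case; once this is isolated, the verification is a direct local check using the definition of the planar dual given in the paper.
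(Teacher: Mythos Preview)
Your proof is correct and follows essentially the same route as the paper's: both invoke the correspondence between bounded faces of the planar dual and non-leaf vertices of $\tilde{\mathcal D}$, and then use that every blowup vertex is $3$-valent to conclude that the surrounding dual face has at most three sides. Your criterion for the bigon case (the outgoing edge is a bridge, so its two sides lie in the same face) is in fact slightly sharper than the paper's phrasing in terms of adjacency to a leaf, but the argument is structurally identical.
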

	
	\begin{proof}
		By our definition of the planar dual (ignoring self-loops corresponding to self-adjacent faces), the bounded faces of $\Lambda_{\mathcal D}^+$ are in bijection with the non-leaf vertices of $\tilde{\mathcal D}$. For such $v$, if no vertices adjacent to $v \in \tilde{\mathcal D}$ are leaves, then the number of sides in the corresponding face of $\Lambda_{\mathcal D}^+$ is the valence of $v$, which is $3$. If $v \in \tilde{\mathcal D}$ is adjacent to a leaf, then the corresponding face is instead a bigon. See \Cref{figure:augmenteddual}. 
	\end{proof}

	The following lemma recalls some properties that a divide coming from singularity theory satisfies. In \Cref{figure:divide_counterexamples} we see examples of divides that, because of the following lemma, cannot come from singularity theory.
	
	\begin{lemma}\label{lem:properties_divides_singularities}
		Let $f: \C^2 \to \C$ be an isolated plane curve singularity and let $\calD \subset D^2$ be a divide associated to $f$s. The following two properties hold:
		\begin{enumerate}
			\item \label{prop:i} The associated intersection diagram $\ld$ is connected.
			\item \label{prop:iii} The number of intersections points between any two branches of $\calD$ equals the intersection multiplicity of the corresponding two branches of $f$. Hence the number of intersection points between any two branches of $\calD$ is at least $1$.
		\end{enumerate}
	\end{lemma}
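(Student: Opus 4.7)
I will prove (2) first, using the classical interpretation of intersection multiplicity as a count of intersection points under generic perturbation, and then deduce (1) by combining (2) with connectedness of the Milnor fiber. For part (2), recall that $\nu_{ij} = \dim_{\C} \C\{x,y\}/(f_i, f_j)$ is the local intersection multiplicity at the origin of the complex curves $f_i = 0$ and $f_j = 0$. By conservation of intersection multiplicity under holomorphic deformation, this equals the number of intersection points (counted with multiplicity) of small generic holomorphic perturbations of the pair $(f_i, f_j)$. In the real morsification constructed by A'Campo and Gussein-Zade for a singularity with local real branches, each branch $f_i$ deforms into a real analytic curve whose real zero set in $B$ is the $i$-th component $\calD_i$ of the divide, and by the reality of the morsification all intersection points between distinct branches are real and transverse. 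Thus all $\nu_{ij}$ complex intersections appear as real crossings between $\calD_i$ and $\calD_j$, establishing the formula; the lower bound $\nu_{ij} \geq 1$ then follows from the basic fact that distinct branches of a reduced plane curve singularity meet at the origin with positive multiplicity.

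\textbf{Part (1).} From (2), every pair of branches of $\calD$ meets, so the divide $\calD$ is connected as a $1$-complex in $B$, its blowup $\tilde{\calD}$ is connected, and consequently so is the full planar dual $\ld^+$ (the planar dual of any connected planar graph embedded in $S^2 = \C \cup \{\infty\}$ is itself connected). It remains to show that deleting the unbounded vertex $v_\infty$ does not disconnect $\ld^+$. For this I appeal to A'Campo's description of the Milnor fiber from the divide: $\Si(f)$ is assembled by plumbing disks and bands indexed by the bounded faces and shared edges of $\tilde{\calD}$, so the underlying plumbing graph is precisely $\ld$. Since $\Si(f)$ has the homotopy type of a wedge of $\mu_f$ circles and is in particular connected, $\ld$ must be connected as well.

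\textbf{Main obstacle.} The crux is the last step: extracting connectedness of $\ld$ from connectedness of $\Si(f)$ via A'Campo's plumbing description. While standard in the theory of divides, this requires invoking the geometric construction of the Milnor fiber rather than working purely from the combinatorial definition of $\ld$. A purely planar-dual argument, showing directly that $v_\infty$ is never a cut vertex of $\ld^+$ for divides arising from singularities, would in principle also work, but seems harder to carry out cleanly in full generality without implicitly using the Milnor fiber structure.
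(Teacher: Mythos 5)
The paper does not actually argue this lemma; it proves both items by citation (Gabrielov / Arnold et al.\ for (1), A'Campo for (2)), so your attempt to supply a real proof is a departure by necessity. Your part (2) is essentially the standard argument behind the cited fact and is acceptable, with one caveat: "by the reality of the morsification all intersection points between distinct branches are real" is too quick, since a real deformation of a real curve can perfectly well have conjugate pairs of non-real nodes. What you need is the defining property of the A'Campo--Gusein-Zade morsification that \emph{all} of the nodes of the deformed curve (the full delta invariant's worth, hence in particular all $\nu_{ij}$ intersections of branches $i$ and $j$) are real saddle points lying on $f_s^{-1}(0)\cap B$; granting that, conservation of intersection number gives the count.

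Part (1) has a genuine gap. Your final step asserts that $\Sigma(f)$ is a plumbing "indexed by the bounded faces and shared edges of $\tilde{\calD}$," i.e.\ by the vertices and edges of $\ld$, so that connectedness of $\Sigma(f)$ forces connectedness of $\ld$. This misdescribes A'Campo's construction: the fiber is assembled from one cylinder per \emph{double point} of $\calD$ and one band per \emph{edge} of $\calD$, so its connectivity reflects connectivity of the divide $\calD$ itself, not of $\ld$. A connected divide always yields a connected surface $\Sigma(\calD)$ even when $\ld$ is disconnected --- the left-hand divide in \Cref{figure:divide_counterexamples} is exactly such an example, and your argument as written applies verbatim to it (it uses nothing about the divide beyond its connectivity) and would therefore "prove" the false statement that its intersection diagram is connected. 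Equivalently: a regular neighborhood of the union of the vanishing cycles is in general a proper subsurface of $\Sigma(f)$ whose complementary pieces (disks and annuli) can join distinct components of that neighborhood, so connectivity does not descend from the surface to the configuration. The true content of (1) is the Gabrielov--Lazzeri theorem that the Dynkin diagram of a distinguished basis of an isolated singularity is connected, whose proof genuinely uses the singularity structure (irreducibility of the discriminant and transitivity of the monodromy action on vanishing cycles), not merely the topology of the divide; this is precisely what the paper cites Gabrielov and Arnold--Gusein-Zade--Varchenko for.
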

	
	\begin{proof}
		The proof of (\ref{prop:i}) is the content of \cite[Section 2]{Gab}. A proof of this fact is also contained in \cite[Theorem 3.4]{ArnII}. Item (\ref{prop:iii})  is in \cite[page 6]{Nor1}.
	\end{proof}

	\begin{figure}[ht!]
	\includegraphics[scale=1]{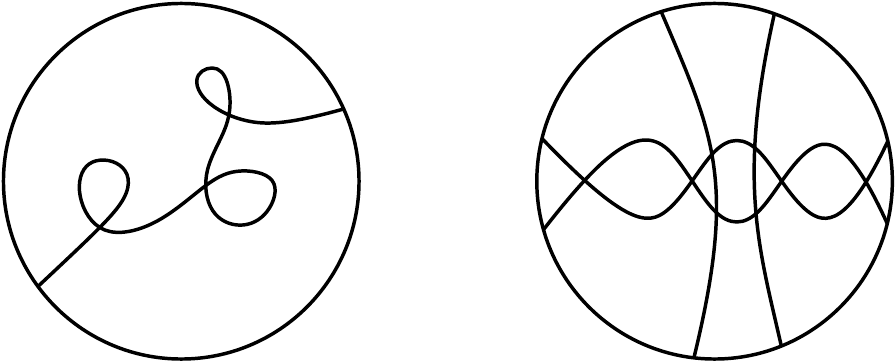}
	\caption{Two examples of divides that cannot come from singularity theory. On the left, a connected divide whose intersection diagram is disconnected. On the right, a connected divide with connected intersection diagram but with two branches not intersecting.}
	\label{figure:divide_counterexamples}
\end{figure}

	\subsection{From the divide to the Milnor fiber}\label{subsection:construction} In \cite{Nor2}, A'Campo shows that the divide can be used to construct a model of the Milnor fiber for which the vanishing cycles distinguished by the divide are easy to understand. We recall the basics of this theory.
	\begin{figure}[ht]
		\labellist
		\small
		\endlabellist
		\includegraphics[scale=1]{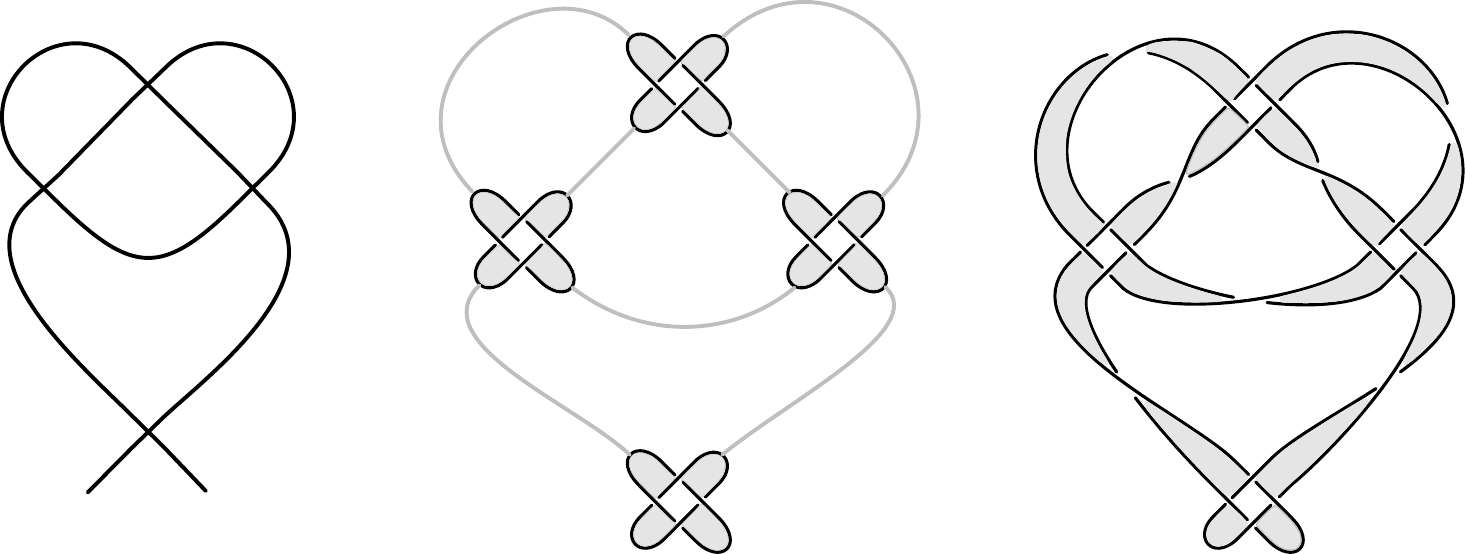}
		\caption{Left: the divide $\mathcal D$. Middle: the first step in the construction of $\Sigma(\mathcal D)$: convert double points in $\mathcal D$ to (boundary-knotted) cylinders. Right: the second and final step in the construction: convert edges in $\mathcal D$ to twisted strips.}
		\label{figure:construction}
	\end{figure}
	
	\begin{figure}[ht]
		\labellist
		\small
		\pinlabel $\Lambda_{\mathcal{D}}^+$ at 20 10
		\pinlabel $\Sigma(\mathcal{D})$ at 210 10
		\endlabellist
		\includegraphics[scale=1]{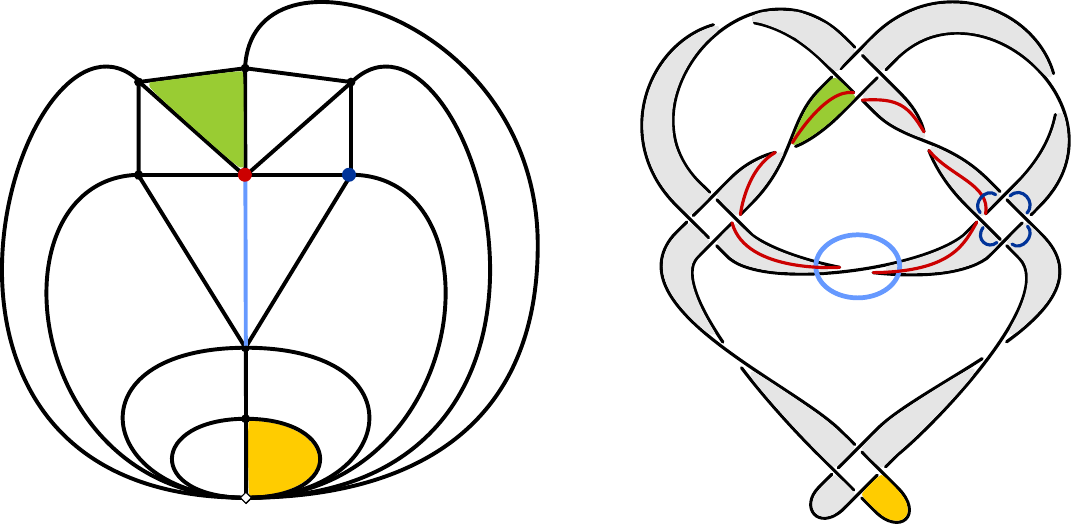}
		\caption{On the left, we have indicated two adjacent vertices, an edge, a triangle, and a bigon of $\Lambda_{\mathcal D}^+$. On the right, we show how these respectively correspond to two intersecting vanishing cycles, a vertical edge, a hexagon, and a square on $\Sigma(\mathcal D)$.}
		\label{figure:legalproof}
	\end{figure}
	
	\begin{construction}[From the divide to the Milnor fiber]\label{construction:milnorfiber}
		Let $\mathcal D$ be a divide. We construct a surface $\Sigma(\mathcal D)$ as follows. Refer to \Cref{figure:construction,figure:legalproof} throughout this discussion. Let $p_1, \dots, p_k$ be an enumeration of the double points in $\mathcal D$. We begin constructing $\Sigma(\mathcal D)$ by taking a disjoint union of cylinders $C_i$ depicted in the middle of \Cref{figure:construction}; note that $C_i$ has four distinguished regions which we identify with the four half-branches of $\mathcal D$ at $p_i$ . For each edge $e \subset \mathcal D$ connecting $p_i$ and $p_j$, we attach the corresponding distinguished regions of the corresponding $C_i$ and $C_j$ by a twisted strip.
		
		Observe that $\Sigma(\mathcal D)$ admits a canonical decomposition into polygonal regions; in fact into squares and hexagons. Each ``half-strip'' corresponding to a half-edge of $\mathcal D$ that connects two saddles is naturally a hexagon with three ``horizontal'' edges determining segments of $\partial(\Sigma(\mathcal D))$ (two lying on the half-strip and one lying on some cylinder), and three ``vertical'' edges lying in the interior of $\Sigma(\mathcal D)$ which connect to other polygonal regions. The half-edges emanating from leaf vertices of $\mathcal D$ instead naturally have the structure of squares with two vertical and two horizontal edges.
		
		The distinguished vanishing cycles for $\mathcal D$ can be identified on $\Sigma(\mathcal D)$ as follows. For $v \in \Lambda_{\mathcal D}$ corresponding to a double point in $\mathcal D$ (i.e. a saddle point), the corresponding vanishing cycle $a_v$ is given by the core of the associated cylinder $C_i \subset \Sigma(\mathcal D)$. If $v \in \Lambda_{\mathcal D}$ instead corresponds to an enclosed region of $\mathcal D$ (i.e. a max or a min), the corresponding $a_v$ is given by following around the strips of $\Sigma(\mathcal D)$ corresponding to the edges of $\mathcal D$ that bound the associated planar region of $\mathcal D$.
	\end{construction}

	For later use, we remark on how the polygonal structure of $\Sigma(\mathcal D)$ is reflected in the augmented intersection graph $\Lambda_{\mathcal D}^+$. The claims follow from an inspection of \Cref{construction:milnorfiber} and \Cref{figure:legalproof}.
	. 
	\begin{lemma}\label{lemma:fiberproperties}
		Let $\mathcal D$ be a divide and $\Sigma(\mathcal D)$ the surface described in \Cref{construction:milnorfiber}. Let $\Lambda_{\mathcal D}^+$ denote the (augmented) intersection graph. For \eqref{item:faces}, recall from \Cref{lemma:dualproperties} that each bounded face of $\Lambda_{\mathcal D}^+$ is either a bigon or a triangle. 
		\begin{enumerate}
			\item\label{item:vertices} The bounded vertices of $\Lambda_{\mathcal D}^+$ are in bijection with the vanishing cycles of $\mathcal D$. Vanishing cycles $a_v, a_w \subset \Sigma(\mathcal D)$ have geometric intersection number $1$ (resp. $0$) if and only if the corresponding vertices $v,w \in \Lambda_{\mathcal D}^+$ are (resp. are not) adjacent. 
			\item The edges of $\Lambda_{\mathcal D}^+$ are in bijection with the vertical edges of $\Sigma(\mathcal D)$. 
			\item\label{item:faces} The bounded faces of $\Lambda_{\mathcal D}^+$ are in bijection with the polygons in $\Sigma(\mathcal D)$. If $F$ has $k$ sides, the corresponding polygon has $k$ vertical edges and $k$ horizontal edges. 
		\end{enumerate}
	\end{lemma}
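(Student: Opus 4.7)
The plan is to verify all three claims by direct inspection of Construction \ref{construction:milnorfiber}, organized around the observation that the construction factors naturally through the blowup $\tilde{\mathcal D}$. The key dictionary is that bounded faces of $\tilde{\mathcal D}$ correspond to vanishing cycles (and to bounded vertices of $\Lambda_{\mathcal D}^+$), edges of $\tilde{\mathcal D}$ correspond to vertical edges of $\Sigma(\mathcal D)$ (and to edges of $\Lambda_{\mathcal D}^+$), and non-leaf vertices of $\tilde{\mathcal D}$ correspond to polygonal regions of $\Sigma(\mathcal D)$ (and to bounded faces of $\Lambda_{\mathcal D}^+$, as recorded in Lemma \ref{lemma:dualproperties}). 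Each of the three statements is then a matter of tracing one of these correspondences and verifying the additional data (intersection, incidence, face size).

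For \eqref{item:vertices}, I would enumerate the bounded vertices of $\Lambda_{\mathcal D}^+$ in two families: those corresponding to the interiors of the small circles introduced at double points of $\mathcal D$, and those corresponding to enclosed regions of $\mathcal D$ already present before blowup. By Construction \ref{construction:milnorfiber}, the first type gives the cores of the cylinders $C_i$ (saddle-type vanishing cycles) and the second gives the cycles traced around the twisted strips bounding an enclosed region (extremal-type vanishing cycles); this exhausts the distinguished vanishing cycles. For the intersection statement I would check each of the three types of adjacency (saddle--saddle, saddle--extremum, extremum--extremum) separately, using Figure \ref{figure:legalproof} as a guide: in each case, adjacent vertices in $\Lambda_{\mathcal D}^+$ correspond precisely to faces of $\tilde{\mathcal D}$ sharing an edge, and the relevant pair of vanishing cycles crosses exactly once in the associated twisted strip or cylinder; non-adjacent vertices correspond to vanishing cycles supported on disjoint subsurfaces, giving zero intersection.

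For \eqref{item:vertices}'s partner \eqref{item:faces} I would invoke Lemma \ref{lemma:dualproperties} to reduce to the two cases of bigons and triangles. A triangular face of $\Lambda_{\mathcal D}^+$ corresponds to a trivalent non-leaf vertex of $\tilde{\mathcal D}$, i.e.\ a vertex lying on a small circle that is not adjacent to a leaf of $\mathcal D$; under Construction \ref{construction:milnorfiber} this is precisely the data of a hexagonal half-strip meeting a cylinder, with three vertical edges (one for each incident edge of $\tilde{\mathcal D}$) and three horizontal edges on $\partial \Sigma(\mathcal D)$. A bigonal face corresponds to a vertex of $\tilde{\mathcal D}$ adjacent to a leaf of $\mathcal D$, which is exactly the square half-strip terminating at a leaf. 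For the middle statement (2), edges of $\Lambda_{\mathcal D}^+$ are in bijection with edges of $\tilde{\mathcal D}$ by definition of the planar dual; each such edge crosses transversely a unique vertical edge of $\Sigma(\mathcal D)$ separating two adjacent polygonal regions, so this is immediate once the polygonal decomposition is understood.

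I do not expect any serious obstacle: the entire lemma is a combinatorial unpacking of the construction, and the content is already essentially displayed in Figures \ref{figure:construction} and \ref{figure:legalproof}. The only place requiring care is the intersection-number portion of \eqref{item:vertices}, where one must check that vanishing cycles associated to non-adjacent vertices really are disjoint (not merely of algebraic intersection zero); this follows because such cycles lie in disjoint unions of cylinders and strips, as can be read off from the factorization of the construction through $\tilde{\mathcal D}$.
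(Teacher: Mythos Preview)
Your proposal is correct and takes essentially the same approach as the paper: the paper's proof consists of the single sentence ``The claims follow from an inspection of \Cref{construction:milnorfiber} and \Cref{figure:legalproof},'' and your proposal is a careful unpacking of exactly that inspection, organized through the blowup $\tilde{\mathcal D}$.
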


	\subsection{Subsurfaces supported by vanishing cycles}\label{subsection:subsurfaces} In the body of the argument, we will frequently consider subsurfaces of $\Sigma(\mathcal D)$ determined by subgraphs of $\Lambda_{\mathcal D}$. If $\mathcal C \subset \Lambda_{\mathcal D}$ is such a subgraph, we let
	\begin{equation}\label{subsurface}
	\Sigma(\mathcal C) \subset \Sigma(\mathcal D)
	\end{equation}
	denote a small regular neighborhood of the union of the vanishing cycles $a_v \subset \Sigma(\mathcal D)$ for $v \in \mathcal C$. There is a natural model for such subsurfaces in terms of the polygonal structure on $\Sigma(\mathcal D)$ discussed above. 
	\begin{construction}\label{construction:subsurface}
		Let $\mathcal C \subset \Lambda_{\mathcal D}$ be a subgraph. A curve $a_v \subset \Sigma(\mathcal C)$ associated to $v \in \mathcal C$ can be described as follows: it alternates between {\em crossing through} vertical edges of $\Sigma(\mathcal D)$, and {\em following} horizontal edges. $\Sigma(\mathcal C)$ can be constructed as a suitable regular neighborhood of the union of all vertical and horizontal edges so encountered as $c \in \mathcal C$ varies. If two distinct such curves $a_v, a_w$ cross through the same hexagonal region $F \subset \Sigma(\mathcal D)$, then such a regular neighborhood can be expanded to include all of $F$. See \Cref{figure:legalproof2}. 
	\end{construction}
	
		\begin{figure}[ht]
		\labellist
		\small
		\pinlabel $\Lambda_{\mathcal{D}}^+$ at 20 10
		\pinlabel $\Sigma(f)$ at 210 10
		\pinlabel $\mathcal{C}$ at 40 90
		\pinlabel $v_1$ at 63 137
		\pinlabel $v_2$ at 63 93
		\pinlabel $v_3$ at 63 35
		\endlabellist
		\includegraphics[scale=1]{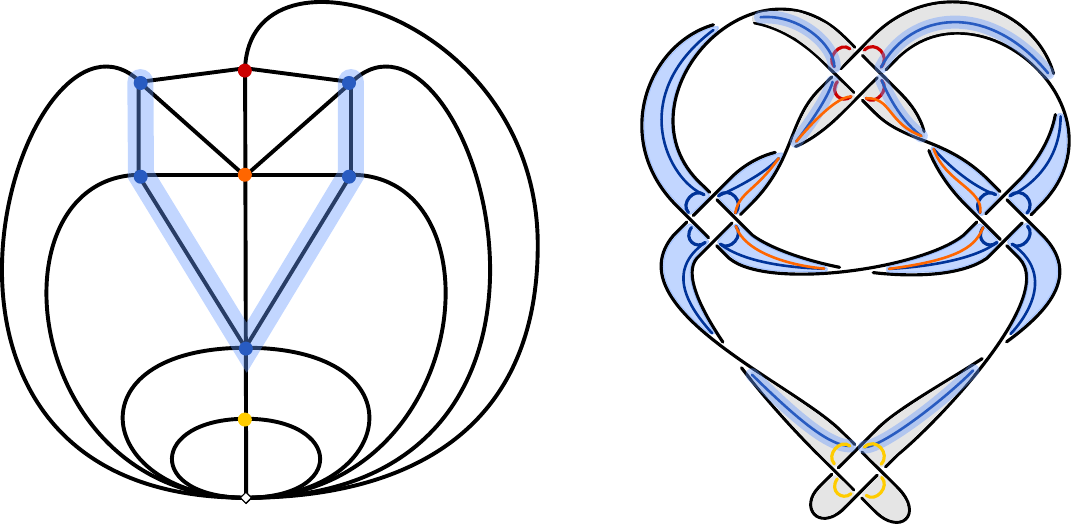}
		\caption{The correspondence between $\mathcal C$ and $\Sigma(\mathcal C)$. The vertices $v_1, v_2, v_3$ {\em not} included in $\mathcal C$ and their corresponding vanishing cycles will be discussed further in \Cref{section:stabilize}.}
		\label{figure:legalproof2}
	\end{figure}

	\section{Framings and the framed mapping class group}\label{section:framings}

	\subsection{Relative framings}\label{subsection:framings}
	We briefly recall here the notion of a {\em relative framing} of a surface; see \cite[Section 2]{strata3} for a more complete discussion. Let $S$ be a compact oriented surface with nonempty boundary. A {\em framing} of $S$ is a trivialization of the tangent bundle of $S$. With a Riemannian metric fixed, framings of $S$ are in correspondence with non-vanishing vector fields on $S$. We say that framings $\phi$ and $\psi$ are {\em isotopic} if the corresponding vector fields are isotopic through non-vanishing vector fields, and are {\em relatively isotopic} if moreover there is such an isotopy that is trivial on $\partial S$. 
	
	\para{(Relative) winding number functions} Suppose that $\phi$ is a framing specified by a non-vanishing vector field $\xi_\phi$. Associated to such data is a {\em winding number function} measuring the holonomy of the forward-pointing tangent vector of simple closed curves on $S$. Precisely, if $\gamma: S^1 \to \mathcal S$ is a $C^1$ embedding, define
	\[
	\phi(\gamma) = \int_{S^1} d\angle (\gamma'(t), \xi_\phi(\gamma(t))) \in \Z.
	\]  
	It is easy to see that $\phi(\gamma)$ is invariant under isotopy of both $\xi_\phi$ and $\gamma$. Letting $\mathcal S$ denote the set of isotopy classes of oriented simple closed curves on $S$, we therefore obtain a function
	\[
	\phi: \mathcal S \to \Z.
	\]
	
	Suppose now that each boundary component $\Delta_i$ of $S$ is equipped with a point $p_i$ such that $\xi_\phi$ is inward-pointing at $p_i$. We call such $p_i$ a {\em legal basepoint}.\footnote{In \cite{strata3}, legal basepoints were required to have $\xi_\phi$ be {\em orthogonally} inward-pointing with respect to some metric, but the treatment we give here is clearly equivalent.} Choose exactly one legal basepoint on each boundary component\footnote{One might be concerned about the possibility that no legal basepoints exist, i.e. that all tangent vectors are outward-pointing. In such a situation, one can modify the framing of the boundary by an isotopy to create legal basepoints. In any case, this only happens when the boundary component has zero winding number, a situation which does not occur for plane curve singularities, c.f. the proof of \Cref{lemma:framedmonodromy}.}. A {\em legal arc} on $S$ is a properly-embedded arc $\alpha: [0,1] \to S$ that begins and ends at distinct legal basepoints, and such that $\alpha$ is tangent to $\xi_\phi$ at both endpoints. The winding number of a legal arc is then necessarily of the form $c + \frac{1}{2}$ for $c \in \Z$, and is invariant up to isotopy through legal arcs ({\em legal isotopies}, for short). Observe also that $\Mod(S)$ acts on the set of legal isotopy classes of legal arcs. 
	
	We let $\mathcal S^+$ be the set obtained from $\mathcal S$ by adding all isotopy classes of oriented legal arcs. Thus, having chosen a system of legal basepoints, a framing $\phi$ gives rise to a {\em relative winding number function}
	\[
	\phi: \mathcal S^+ \to \tfrac{1}{2} \Z.
	\]
	The relative winding number function associated to a framing $\phi$ is clearly invariant under relative isotopies of the framing. Crucially, the converse holds as well. 
	
	\begin{proposition}[c.f. Proposition 2.1, \cite{strata3}]\label{proposition:relwnf}
	Let $S$ be a surface of genus $g \ge 2$, and let $\phi$ and $\psi$ be framings of $S$ that restrict to the same framing of $\partial S$. If the relative winding number functions associated to $\phi$ and $\psi$ are equal, then the framings $\phi$ and $\psi$ are relatively isotopic. 
	\end{proposition}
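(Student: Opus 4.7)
The plan is to identify the set of relative isotopy classes of framings (extending a fixed framing of $\partial S$) as a torsor over $H^1(S, \partial S; \Z)$, and then check that the difference of two relative winding number functions is precisely the pairing against the torsor-difference class. Since simple closed curves and legal arcs generate $H_1(S, \partial S; \Z)$, the hypothesis then forces this difference class to vanish.

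First, I would set up the torsor structure. A framing of $S$ is a section of the unit tangent bundle $UT(S) \to S$, which is a principal $S^1$-bundle. Two sections agreeing on $\partial S$ differ by a map $S \to S^1$ that is constant on $\partial S$, i.e. an element of $[(S, \partial S), (S^1, *)] = H^1(S, \partial S; \Z)$. This is compatible with isotopies rel.\ boundary, so the set of relative isotopy classes of framings extending a given boundary framing is a torsor over $H^1(S, \partial S; \Z)$. Concretely, given $\phi$ and $\psi$ with $\phi|_{\partial S} = \psi|_{\partial S}$, define $\theta : S \to S^1$ by letting $\theta(p)$ be the oriented angle from $\xi_\psi(p)$ to $\xi_\phi(p)$; then $\theta$ is constant on $\partial S$ and its cohomology class $d(\phi,\psi) \in H^1(S, \partial S; \Z)$ is the torsor-difference, vanishing iff $\phi$ and $\psi$ are relatively isotopic.

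Next, I would compare winding numbers. For a $C^1$ oriented simple closed curve $\gamma$, a direct calculation gives
\[
\phi(\gamma) - \psi(\gamma) = \int_{S^1} d\angle(\xi_\psi \circ \gamma, \xi_\phi \circ \gamma) = \deg(\theta \circ \gamma) = \langle d(\phi,\psi), [\gamma] \rangle,
\]
where $[\gamma] \in H_1(S, \partial S; \Z)$ is the induced class. For a legal arc $\alpha$, the two half-integer contributions from the endpoints cancel in the difference (since $\theta$ vanishes at both legal basepoints by the agreement on $\partial S$), so $\theta \circ \alpha$ descends to a map $([0,1], \{0,1\}) \to (S^1, *)$ of integer degree, and the same pairing formula holds.

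The last step is to argue that the classes of oriented simple closed curves together with legal arcs exhaust $H_1(S, \partial S; \Z)$. The closed curves generate the image of $H_1(S;\Z) \to H_1(S, \partial S;\Z)$, and a choice of legal arcs giving a spanning tree among the boundary components surjects onto the cokernel (via the boundary map to $\tilde H_0(\partial S;\Z)$). To reach every legal arc class from these, one concatenates with closed loops to absorb differences; this is where the genus hypothesis $g \ge 2$ enters comfortably, ensuring enough room to realize the required homology classes by \emph{simple} curves and arcs. Granting this, the hypothesis $\phi|_{\mathcal{S}^+} = \psi|_{\mathcal{S}^+}$ kills $d(\phi,\psi)$ on a generating set, hence $d(\phi,\psi) = 0$ and $\phi, \psi$ are relatively isotopic.

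The main obstacle I anticipate is the bookkeeping in the legal-arc calculation: one must verify carefully that the half-integer endpoint contributions to $\phi(\alpha)$ and $\psi(\alpha)$ are defined compatibly so that they cancel, and that the resulting integer really does agree with $\langle d(\phi,\psi), [\alpha] \rangle$ under the Poincaré--Lefschetz pairing. Beyond that, the surjectivity onto $H_1(S,\partial S;\Z)$ from simple curves and legal arcs is standard but needs one to allow rerouting of arcs through the interior (using the simple closed curves already at hand) rather than restricting to an \emph{a priori} fixed finite collection.
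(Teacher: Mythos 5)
The paper does not prove this proposition itself—it is quoted from \cite{strata3} (Proposition 2.1 there)—and the proof given there is essentially the obstruction-theoretic argument you outline: relative framings extending a fixed boundary framing form a torsor over $H^1(S,\partial S;\Z)$, the difference of relative winding number functions is the pairing against the difference class (equivalently, by Poincar\'e--Lefschetz duality, an algebraic intersection number with a class in $H_1(S;\Z)$), and oriented simple closed curves together with legal arcs generate $H_1(S,\partial S;\Z)$, so vanishing of the difference on $\mathcal S^+$ kills the class under the perfect pairing. Your argument is correct and matches that route; the one cosmetic quibble is that the generation step works in every genus, so $g\ge 2$ is not actually where that hypothesis does any work.
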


	\subsection{Framed mapping class groups} We continue with the notation of \Cref{subsection:framings}. We recall that the {\em mapping class group} $\Mod(S)$ is the group of {\em relative} (i.e. boundary-trivial) isotopy classes of orientation-preserving diffeomorphisms of $S$. In particular, there is a well-defined action of $\Mod(S)$ on the set of relative isotopy classes of framings of $S$. For such a framing $\phi$, define the {\em framed mapping class group} to be the stabilizer of $\phi$:
	\[
	\Mod(S)[\phi] = \{f \in \Mod(S) \mid f \cdot \phi = \phi\}.
	\]
	\para{Admissible curves, admissible twists} In the theory of framed mapping class groups, a prominent role is played by the set of Dehn twists that preserve the framing. 
	
	\begin{definition}[Admissible]\label{definition:admissible}
		A simple closed curve $a \subset S$ is {\em admissible} if it is nonseparating and if $\phi(a) = 0$ (necessarily for either choice of orientation). The corresponding Dehn twist $T_a$ preserves $\phi$ (c.f. \cite[Section 2]{strata3}), and is called an {\em admissible twist}. 
	\end{definition}
	\begin{remark}\label{remark:admissible}
	There is a converse to the assertion made in \Cref{definition:admissible} that $T_a$ preserves $\phi$ for $a$ admissible. Namely, let $c \subset S$ be an arbitrary {\em nonseparating} simple closed curve. Then the ``twist-linearity formula'' (c.f. \cite[Lemma 2.4.1]{strata3}) implies that if $c$ is not admissible, there is some $d$ such that $\phi(T_c(d)) \ne \phi(d)$, and hence $T_c$ does not preserve $\phi$.
	\end{remark}

	\para{Generating Mod(S)[$\mathbf{\phi}$]}  When the genus of $S$ is at least $5$, \cite{strata3} establishes that $\Mod(S)[\phi]$ is generated by a finite collection of admissible twists. In order to state the result, we introduce two pieces of terminology: {\em $E$-arboreal spanning configurations} and {\em $h$-assemblages of type $E$}.
	\begin{definition}[$E$-arboreal spanning configuration]\label{definition:config}
	Let $\mathcal C =\{c_1, \dots, c_k\}$ be a collection of curves on $S$, pairwise in minimal position, with the property that each pair of curves intersect in at most one point. Such a configuration determines an intersection graph $\Lambda_{\mathcal C}$. The configuration $\mathcal C$ is said to be {\em arboreal} if $\Lambda_{\mathcal C}$ is a tree, and {\em $E$-arboreal} if moreover $\Lambda_{\mathcal C}$ contains the $E_6$ Dynkin diagram as a subgraph.
	\end{definition}
	For later use, and to illustrate \Cref{definition:config}, we record the particular $E$-arboreal spanning configurations we employ in the paper. We will only encounter configurations with the structure of a ``tripod graph'': that is, a tree with exactly one vertex of valence $3$ and all other vertices of valence $\le 2$. A tripod has three branches; we say that $T$ is {\em the tripod of type $(a,b,c)$} if the branches have lengths $a,b,c$ (here, ``length'' counts the number of vertices of valence $\le 2$; in particular, the tripod of type $(a,b,c)$ has a total of $a+b+c+1$ vertices). 
	
	\begin{lemma}\label{lemma:tripods}
	The tripod graphs of type $(1,4,4), (1,2,6), (2,3,4)$, and $(2,2,5)$ all correspond to surfaces of genus $5$ with one boundary component, and hence are $E$-arboreal spanning configurations on a surface of genus $5$.
	\end{lemma}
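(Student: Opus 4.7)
The plan is to compute the topological type of $\Sigma(\mathcal C)$ for each tripod $\mathcal C$ via two independent calculations: the Euler characteristic from the graph structure, and the number of boundary components from the mod-$2$ adjacency matrix of the intersection tree.

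For the Euler characteristic, if $\mathcal C = \{c_1,\dots,c_n\}$ is a configuration whose intersection graph is a tree with $n-1$ edges (each edge recording a single transverse crossing), then $\bigcup c_i$ admits a CW-structure with $n-1$ vertices and $2(n-1)$ edges, so $\chi(\Sigma(\mathcal C)) = 1 - n$. Writing $\chi = 2 - 2g - B$ for a genus-$g$ surface with $B$ boundary components yields $2g + B = n + 1$. Each of the four tripods in the statement has $n = a + b + c + 1 = 10$, so $2g + B = 11$ in all cases.

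The main step is to show $B = 1$. Since $b_1(\bigcup c_i) = n$ matches $\dim_{\mathbb F_2} H_1(\Sigma(\mathcal C);\mathbb F_2) = 2g + B - 1 = n$, the classes $[c_1],\dots,[c_n]$ form an $\mathbb F_2$-basis. In this basis the mod-$2$ intersection form is represented by the adjacency matrix $A(T)$ (self-intersections vanish on an orientable surface and off-diagonal entries record intersection parity). The kernel of the intersection form on $H_1$ of an oriented surface with boundary equals the image of $H_1(\partial;\mathbb F_2) \to H_1(\Sigma(\mathcal C);\mathbb F_2)$, which has dimension $B - 1$ (the boundary circles span this image modulo the single relation $\sum[\partial_i]=0$). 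Hence $B = 1$ if and only if $A(T)$ is invertible over $\mathbb F_2$. By the Leibniz formula, the only permutations contributing to $\det A(T)$ are fixed-point-free involutions whose transpositions are edges of $T$, i.e., perfect matchings; moreover a tree has at most one perfect matching (the symmetric difference of two would produce an even cycle, impossible in a tree). Thus $\det A(T) \equiv 1 \pmod 2$ exactly when $T$ admits a perfect matching.

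It remains to verify that each of the four tripods admits a perfect matching, which is a routine branch-by-branch check: in $T(1,4,4)$ pair the trivalent vertex with the length-$1$ leaf and then pair each length-$4$ branch into two adjacent pairs; in $T(1,2,6)$ proceed analogously, breaking the length-$6$ branch into three adjacent pairs; in $T(2,3,4)$ the leaf-end of the length-$3$ branch forces a match propagating back so that the trivalent vertex pairs with the first vertex of that branch, while the length-$2$ and length-$4$ branches match internally; in $T(2,2,5)$ both length-$2$ branches match internally, and the length-$5$ branch's leaf-end constraints chain back to force the trivalent vertex to pair with its first vertex. Hence $B = 1$ and $g = 5$ for each tripod. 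The $E$-arboreal condition is immediate: $E_6$ is itself the tripod $T(1,2,2)$, and each of our four tripods has branch lengths at least $(1,2,2)$ after reordering, so contains $E_6$ as a subgraph. The main obstacle is reducing the boundary-component count to the adjacency-matrix criterion; once that linkage is established, all remaining checks are elementary combinatorics.
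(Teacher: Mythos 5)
Your proof is correct, and for the one genuinely non-trivial step it takes a different route from the paper. The paper's own argument is very short: it computes $\chi(\Sigma(\mathcal C)) = -9$ from the ten vertices exactly as you do, and then simply asserts that ``direct inspection'' shows each of the four surfaces has genus $5$ --- i.e.\ it leaves the determination of the number of boundary components to a case-by-case tracing of the boundary of the regular neighborhood. You replace that inspection with an algebraic criterion: the curve classes form an $\mathbb F_2$-basis of $H_1(\Sigma(\mathcal C);\mathbb F_2)$, the mod-$2$ intersection form is the adjacency matrix $A(T)$ of the tree, its radical has dimension $B-1$ by Poincar\'e--Lefschetz duality, and $\det A(T) \bmod 2$ counts perfect matchings of $T$ (of which a tree has at most one). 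This reduces ``$B=1$'' to the existence of a perfect matching, which you verify branch by branch. Your approach is longer but more robust and more informative: it gives a uniform criterion valid for \emph{any} arboreal configuration (and correctly recovers, e.g., the two boundary components of the odd $A_n$ chains), whereas the paper's inspection must be redone picture by picture. The only points you assert without full proof --- that the $[c_i]$ span $H_1$ of the union (justifiable by attaching the circles in a leaf order of the tree and applying Mayer--Vietoris), and that permutations with cycles of length $\ge 3$ cannot contribute to $\det A(T)$ because a tree has no graph cycles --- are both easy and correctly stated, so I see no gap.
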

	\begin{proof}
	A configuration of curves with ten vertices corresponds to an oriented surface with Euler characteristic $-9$. It is a matter of direct inspection to check that for each of the tripod graphs listed above, this surface has genus $5$. Each also contains the tripod graph of type $(1,2,2)$ (i.e. the $E_6$ graph) as a subgraph.
	\end{proof}

We come now to one of the central definitions of the paper, describing the generating criterion employed in \Cref{theorem:Egens}.
	\begin{definition}[$h$-assemblage of type $E$]\label{definition:assemblage}
	Let $\mathcal C = \{c_1, \dots, c_k, c_{k+1}, \dots, c_\ell\}$ be a collection of curves on $S$. Suppose that (1) $\mathcal C_k= \{c_1, \dots, c_k\}$ is an $E$-arboreal spanning configuration on a subsurface $S_k \subset S$ of genus $h$, and (2) for $j \ge k$, let $S_j$ denote a regular neighborhood of the collection $C_j = \{c_1, \dots, c_j\}$; then $c_{j+1} \cap S_{j}$ is a single arc (possibly, but not necessarily, entering and exiting along the same boundary component of $S_j$), and (3) $S_\ell = S$. The subsurface $S_k$ is called the {\em core} of the assemblage, and the ordering of the curves $c_j$ for $j > k$ is called the {\em attaching sequence}.
	\end{definition}
	\begin{remark}
	Two comments concerning assemblages are in order. First, every $E$-arboreal spanning configuration on $S$ is trivially an $h$-assemblage of type $E$, for $h$ the genus of $S$. Secondly, we remark that assemblages are best understood ``constructively'': one begins with an $E$-arboreal spanning configuration on a subsurface, and then attaches any sequence of topological $1$-handles arising as ``half-curves''. Recall that a surface $S^+$ is said to be a {\em stabilization} of $S \subset S^+$ if $S^+$ is obtained from $S$ by attaching a single $1$-handle, i.e. the neighborhood of an arc in some ambient surface. Thus an $h$-assemblage of type $E$ is obtained from an $E$-arboreal spanning configuration on the genus-$h$ subsurface $S_k$ by subsequently performing any sequence of stabilizations. 
	\end{remark}

	\begin{theorem}[c.f. Theorem B.II of \cite{strata3}]\label{theorem:Egens}
	Let $(S, \phi)$ be a framed surface. Let $\mathcal C = \{c_1,\dots, c_\ell\}$ be an $h$-assemblage of type $E$ on $S$ for some $h \ge 5$. If $\phi(c) = 0$ for all $c \in \mathcal C$, then
	\[
	\Mod(S)[\phi] = \pair{ T_c \mid c \in \mathcal C}.
	\]
	\end{theorem}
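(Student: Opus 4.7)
The plan is to proceed by induction on $\ell - k$, the number of stabilization steps needed to construct $S$ from its core $S_k$. The inclusion $\pair{T_c \mid c \in \mathcal C} \subseteq \Mod(S)[\phi]$ is immediate from \Cref{definition:admissible}: since $\phi(c) = 0$ for each $c \in \mathcal C$, the twist $T_c$ preserves $\phi$. The content is the reverse inclusion, and throughout the argument one thinks of a mapping class $f \in \Mod(S)[\phi]$ as something to be decomposed step by step using the given admissible twists.

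The base case $\ell = k$ requires showing that Dehn twists about an $E$-arboreal spanning configuration generate $\Mod(S_k)[\phi]$ when $S_k$ has genus $h \ge 5$. I would approach this via the theory of $r$-spin mapping class groups: the $\Z$-valued winding number function $\phi$ descends to a $\Z/r\Z$-valued spin structure for every $r$, and by \Cref{proposition:relwnf} the stabilizer $\Mod(S_k)[\phi]$ is the intersection over all $r$ of the stabilizers of these quotients. One then invokes the $r$-spin generation theorem (the precursor to \Cref{theorem:Egens} in the strata series), which asserts that twists about an $E$-arboreal configuration generate the $r$-spin stabilizer in genus $\ge 5$. The role of the $E_6$ subdiagram is to produce enough ``change of coordinates'' between admissible curves to realize an arbitrary framing-preserving mapping class as a product of twists supported on the configuration; taking $r$ large enough to detect the difference between any two non-isotopic framings with identical spin reductions recovers the statement for $\phi$ itself.

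For the inductive step, suppose the theorem holds for the $h$-assemblage $\mathcal C_j = \{c_1, \dots, c_j\}$ on the subsurface $S_j$. The curve $c_{j+1}$ meets $S_j$ in a single arc $\alpha$, so $S_{j+1}$ is obtained from $S_j$ by attaching a $1$-handle along $\alpha$. Given $f \in \Mod(S_{j+1})[\phi]$, one invokes a change-of-coordinates principle for admissible arcs: conjugating $T_{c_{j+1}}$ by elements of $\pair{T_{c_1},\dots,T_{c_j}} = \Mod(S_j)[\phi]$ produces twists about any curve obtained from $S_j$ by attaching a legal arc with the same winding number and endpoint data as $\alpha$. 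Using this, one modifies $f$ by a product of such twists to arrange that $f$ fixes $S_j$ setwise and acts trivially on the attached handle, at which point the inductive hypothesis applied to $f|_{S_j}$ concludes the argument.

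The main obstacle is the base case. The stabilization step is largely formal once a robust change-of-coordinates toolkit for admissible arcs is in place, but the base case requires substantive mapping class group theory tying the combinatorics of the $E_6$ Dynkin diagram to the algebraic structure of $r$-spin stabilizers. The genus bound $h \ge 5$ is essential both to accommodate the $E_6$ subgraph inside tripod configurations like those of \Cref{lemma:tripods} and for the connectivity arguments on the complex of admissible curves that underpin generation results for $r$-spin stabilizers; relaxing it would require a genuinely new idea, which is why the present paper inherits the same restriction in \Cref{theorem:main}.
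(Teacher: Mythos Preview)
The paper does not prove this theorem. It is quoted verbatim from \cite{strata3} (hence the attribution ``c.f.\ Theorem B.II of \cite{strata3}'') and is used throughout the paper as a black-box input, most notably in \Cref{subsection:proof}. There is therefore nothing in the present paper to compare your proposal against.

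As for the proposal itself: the easy inclusion and the inductive structure along the filtration $S_k \subset S_{k+1} \subset \dots \subset S_\ell$ are the natural shape such an argument would take, and you correctly identify that the base case carries the real weight. However, what you have written is a roadmap rather than a proof. The reduction of the base case to ``the $r$-spin generation theorem'' is itself the content of the cited result, not a step toward proving it; and the description of the relative framing stabilizer as an intersection of $r$-spin stabilizers over all $r$ is not how \Cref{proposition:relwnf} works (that proposition characterizes relative isotopy classes via winding numbers on curves and legal arcs, not via reductions modulo $r$). Likewise, in the inductive step, the assertion that one can modify an arbitrary $f \in \Mod(S_{j+1})[\phi]$ by twists so that it fixes $S_j$ setwise and acts trivially on the handle is exactly the nontrivial transitivity/connectivity statement that requires proof; invoking an unnamed ``change-of-coordinates principle for admissible arcs'' defers rather than discharges this obligation.
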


	\subsection{The canonical framing} \label{subsection:canonicalframing}
	
	In this section, we show how the Milnor fiber of an isolated plane curve singularity is equipped with a canonical relative framing arising from a ``Hamiltonian vector field''.	
	
	\begin{proposition}\label{prop:framed}
	Let $f: \C^2 \to \C$ be an isolated plane curve singularity. There is a vector field $\xi_f$ on $\C^2$ that is everywhere tangent to the fibers of the Milnor fibration, such that the restriction to a given fiber $\Sigma(f)$ is nowhere-vanishing.
	\end{proposition}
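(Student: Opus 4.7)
The plan is to exhibit an explicit vector field using a complex-Hamiltonian construction associated to the holomorphic function $f$. In holomorphic coordinates $(z_1,z_2) = (x,y)$ on $\C^2$, I would define the complex vector field
\[
H_f \;=\; -\frac{\partial f}{\partial z_2}\frac{\partial}{\partial z_1} \;+\; \frac{\partial f}{\partial z_1}\frac{\partial}{\partial z_2},
\]
and let $\xi_f$ denote the real vector field $H_f + \overline{H_f}$ on $\C^2 \cong \R^4$.

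Next I would verify the two required properties. For tangency to the fibers of $f$: since $f$ is holomorphic, $df$ annihilates $T^{0,1}\C^2$, and the explicit computation
\[
df(H_f) \;=\; -\frac{\partial f}{\partial z_2}\frac{\partial f}{\partial z_1} \;+\; \frac{\partial f}{\partial z_1}\frac{\partial f}{\partial z_2} \;=\; 0
\]
shows that $df$ also kills $H_f$; hence $df(\xi_f) = 0$, so $\xi_f$ is tangent to every fiber of $f$. For nonvanishing on $\Sigma(f)$: the vector field $\xi_f$ vanishes at a point $p$ precisely when $\partial_{z_1}f(p) = \partial_{z_2}f(p) = 0$, i.e.\ exactly at the critical points of $f$. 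Since $f$ has an isolated singularity at the origin, after shrinking $B_\epsilon$ if necessary the only critical point of $f$ in $B_\epsilon$ is $(0,0)$, with critical value $0$. The Milnor fiber $\Sigma(f)$ sits over a point of $\partial D_\delta$ with $\delta > 0$, hence over a regular value, so $(0,0) \notin \Sigma(f)$ and $\xi_f$ is nowhere-vanishing on $\Sigma(f)$.

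I do not anticipate any real obstacle: the only subtle point is the choice of $B_\epsilon$ small enough that $(0,0)$ is the unique zero of $\xi_f$ in $B_\epsilon$, which is immediate from the definition of isolated singularity. The content of the proposition is essentially just the observation that the skew-gradient of a holomorphic function of two variables is holomorphic, tangent to the level sets, and vanishes exactly on the critical locus.
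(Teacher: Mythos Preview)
Your proof is correct and follows essentially the same approach as the paper: both exhibit the Hamiltonian (skew-gradient) vector field of $f$, check that it is tangent to the fibers since it lies in the kernel of $df$, and note that it vanishes only at the isolated critical point $(0,0)$, which does not lie on $\Sigma(f)$. You are simply more explicit about passing from the holomorphic vector field to its underlying real vector field via $H_f+\overline{H_f}$, whereas the paper writes the components $(\partial f/\partial y,\,-\partial f/\partial x)$ directly.
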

	
	\begin{proof}  Consider the $1$-form $df$. The kernel of this form defines the Milnor fibration, and each fiber is invariant under the flow of the Hamiltonian vector field 
	\[
	\xi_f:=\left( \frac{\partial f}{\partial y}, -\frac{\partial f}{\partial x} \right). 
	\]
	
	Since $f$ is an isolated singularity, $\xi_f$ vanishes only at $(0,0) \in \C^2$. Thus $\xi_f$ determines a nowhere-vanishing vector field on each Milnor fiber $\Sigma(f)$.
	\end{proof}
	
	
	The presence of the canonical framing has strong consequences for the structure of the geometric monodromy group. 
	\begin{lemma}\label{lemma:framedmonodromy}
		Let $f: \C^2 \to \C$ be an isolated plane curve singularity. Then the geometric monodromy group $\Gamma_f \le \Mod(\Sigma(f))$ stabilizes the relative isotopy class of the canonical framing $\phi$ associated to $\xi_f$:
		\[
		\Gamma_f \le \Mod(\Sigma(f))[\phi].
		\]
	\end{lemma}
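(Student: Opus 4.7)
The plan is to exploit the fact that winding numbers are discrete invariants, so any continuous family of framings preserves them automatically.

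First I would fix a loop $\gamma\colon [0,1]\to V_f\setminus\Disc$ with $\gamma(0)=\gamma(1)=\lambda_0$ and realize its monodromy explicitly. Choose an Ehresmann connection on the fiber bundle $X_f\to V_f\setminus\Disc$ whose horizontal distribution is trivial in a neighborhood of $\partial B_\epsilon$, so that parallel transport is the identity on the boundary. This produces a continuous family of diffeomorphisms $\psi_t\colon \Sigma(f)\to\Sigma(f_{\gamma(t)})$ with $\psi_0=\id$, $\psi_t|_{\partial\Sigma(f)}=\id$, and $[\psi_1]=\rho(\gamma)\in\Gamma_f$.

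Next I would build a continuous family of framings on $\Sigma(f)$ itself. For every $\lambda\notin\Disc$, the argument of \Cref{prop:framed} applies verbatim to $f_\lambda$: its Hamiltonian vector field $\xi_{f_\lambda}$ vanishes only at critical points of $f_\lambda$, none of which lie on the smooth fiber $\Sigma(f_\lambda)$. Define
\[
\phi_t \;=\; \psi_t^{\,*}\bigl(\xi_{f_{\gamma(t)}}|_{\Sigma(f_{\gamma(t)})}\bigr),
\]
a continuously varying non-vanishing vector field on $\Sigma(f)$. By construction $\phi_0=\phi$ and $\phi_1=\psi^{*}\phi$, where $\psi=\psi_1$ represents $\rho(\gamma)$.

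The remaining step is to deduce that $\phi_0$ and $\phi_1$ are relatively isotopic, for which I would invoke \Cref{proposition:relwnf}. Both hypotheses must be checked. (a) Since $\psi|_{\partial\Sigma(f)}=\id$ and $\gamma$ is a loop based at $\lambda_0$, we have $\phi_1|_{\partial\Sigma(f)}=\xi_{f_{\lambda_0}}|_{\partial\Sigma(f)}=\phi_0|_{\partial\Sigma(f)}$, so the two framings literally coincide on the boundary. (b) For any isotopy class $c\in\mathcal S^+$ of oriented simple closed curve or legal arc for $\phi_0$, the function $t\mapsto \phi_t(c)$ is continuous in $t$; for closed curves it takes values in $\Z$, and for legal arcs the values lie in $\tfrac12+\Z$ provided the endpoint conditions remain legal, which holds at $t=0,1$ since the boundary framings agree there. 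A continuous function to a discrete set is constant, so $\phi_0(c)=\phi_1(c)$ for all $c\in\mathcal S^+$. By \Cref{proposition:relwnf}, $\phi_0$ and $\phi_1$ are relatively isotopic; equivalently, $\psi\cdot\phi=\phi$ in the action of $\Mod(\Sigma(f))$ on relative isotopy classes, i.e.\ $\rho(\gamma)\in\Mod(\Sigma(f))[\phi]$.

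The main technical point — the place I would need to argue carefully rather than wave hands — is the legal-arc part of (b): along the family, the boundary framing $\phi_t|_{\partial\Sigma(f)}$ moves, so a fixed arc need not be legal for $\phi_t$ at intermediate $t$. I would handle this either by perturbing the arc continuously in $t$ to keep it legal (possible since the boundary framing never vanishes), or, more cleanly, by noting that the winding integral defining $\phi_t(\alpha)$ is continuous in $t$ regardless of legality and agrees with the legal-arc winding number exactly at $t=0$ and $t=1$, where the boundary framings match. Either route produces the needed constancy and closes the argument.
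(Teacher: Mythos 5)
Your overall strategy coincides with the paper's: reduce to invariance of the relative winding number function via \Cref{proposition:relwnf}, and obtain that invariance from continuity together with discreteness of winding numbers. (The paper parallel-transports curves and arcs through the moving fibers and evaluates the Hamiltonian winding number there; you pull the Hamiltonian framings back to a fixed fiber and evaluate on fixed curves. These are the same computation, and your treatment of closed curves is fine.) Like the paper, you correctly isolate the legal-arc case as the crux; the problem is that neither of your two proposed resolutions closes it as stated.

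Route (ii) is fallacious: for a fixed arc $\alpha$ the integral $t \mapsto \int_\alpha d\angle(\alpha',\phi_t)$ is indeed continuous, but it lies in $\tfrac12+\Z$ only at $t=0,1$, where the boundary framings happen to agree; at intermediate $t$ it is an unconstrained real number, so continuity forces nothing. Concretely, $\phi_1(\alpha)-\phi_0(\alpha)=\int_\alpha d\angle(\phi_0,\phi_1)$ is a priori an arbitrary element of $2\pi\Z$ (it is exactly the relative obstruction class evaluated on $[\alpha]\in H_1(\Sigma(f),\partial\Sigma(f))$, e.g.\ a boundary twist in a collar produces a family $\phi_t$ with all your stated properties but a nonzero such difference), and your argument supplies no mechanism to kill it. Route (i) is the correct idea --- it is what the paper does, by constructing a continuously varying system of legal basepoints and parallel-transporting legal arcs --- but your justification ``possible since the boundary framing never vanishes'' is insufficient: a non-vanishing framing can be everywhere outward-pointing along a boundary component, leaving no legal basepoints for some intermediate $\phi_t$. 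What is actually needed is that the winding number $n_i$ of each boundary component is nonzero, so that the inward-pointing locus is a nonempty union of $\abs{n_i}$ intervals for every $t$; the paper secures this by first adjusting $\xi_f$ to be monotone on the boundary (using triviality of the fibration near $\partial B_\epsilon$) and then citing \cite[pp.~330]{Kha} for $n_i<0$. With that input the legal loci form a trivial bundle over the base, a continuous choice of legal arcs $\alpha_t$ with $\phi_t(\alpha_t)\in\tfrac12+\Z$ for all $t$ exists, and the discreteness argument goes through. Without it, your proof has a genuine gap at exactly the step you flagged.
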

	\begin{proof}
	To simplify notation, throughout this argument we choose $\epsilon >> \delta > 0$ suitably small, and restrict the domain of $f$ to $B_\epsilon \cap f^{-1}(D_\delta)$. In this way, the Milnor fiber is simply the preimage $f^{-1}(x_0)$ for a suitable $x_0 \in D_\delta$. 
	
	Let $\tilde f$ be an arbitrary morsification of $f$ with critical set $X \subset D_\delta \subset \C$; we obtain $\Gamma_f$ as the image of the monodromy map 
	\[
	\rho: \pi_1(D_\delta \setminus X) \to \Mod(\Sigma(f)),
	\]
	where $\Sigma(f) = f^{-1}(x_0)$ is a chosen Milnor fiber. By \Cref{proposition:relwnf}, it suffices to show that $\Gamma_f$ preserves the relative winding number function $\phi$ associated to $\xi_f$.
	
	We begin by seeing that $\Gamma_f$ preserves the value of $\phi$ on simple closed curves. Let $c: S^1 \to \Sigma(f)$ be a $C^1$-embedded simple closed curve, and let $\gamma \subset D_\delta \setminus X$ be a loop based at $x_0$. Then $c$ can be parallel transported around the fibers above $\gamma$; after completing this process we obtain the curve $\rho(\gamma)(c)$, well-defined up to isotopy. For each point $x_t \in \gamma$, the vector field $\xi_f$ endows the fiber $f^{-1}(x_t)$ with an associated {\em absolute} winding number function. The winding number of the parallel transport of $c$ is valued in the discrete set $\Z$ and clearly varies continuously under parallel transport, hence is invariant. This shows that $\phi(c) = \phi(\rho(\gamma)(c))$ for every $c \in \mathcal S$ and every $\gamma \in \pi_1(D_\delta \setminus X)$, proving that $\phi$ is $\Gamma_f$-invariant when restricted to simple closed curves. 
	
	It remains to show that $\phi(\alpha) = \phi(\rho(\gamma)(\alpha))$ for $\alpha$ an arbitrary legal arc on $\Sigma(f)$. In order to argue as we did for simple closed curves, it is necessary to give a construction of parallel transport for {\em legal} arcs. To do so, it suffices to specify a system of legal basepoints on each fiber $f^{-1}(x)$ that varies continuously with $x \in D_\delta \setminus X$. 
	
	To give such a construction, let $\epsilon>>\delta>0$ be as above, and let  $0<\epsilon'<\epsilon$. Consider the projection map $(B_\epsilon \setminus B_{\epsilon'})\cap f^{-1}(D_\delta) \to D_\delta$. Since $f$ has an {\em isolated} singularity at the origin, the above map is a locally trivial fibration, and since $D_{\delta}$ is contractible, the fibration is trivial. We are therefore free to adjust $\xi_f$ by an isotopy so that it is {\em monotone} on each boundary component $\Delta_i(x)$ of each fiber $f^{-1}(x)$. In particular, the locus of legal points on each $\Delta_i(x)$ (i.e. those points where $\xi_f$ is inward-pointing) is a union of $\abs{n_i}$ disjoint open intervals, where $n_i$ is the winding number of $\Delta_i(x)$ viewed as a curve on $f^{-1}(x)$. After adjusting for differing normalization and sign conventions, \cite[pp. 330]{Kha} finds that each $n_i< 0$, so that the locus of legal points is nonempty on each fiber.
	
	To summarize, the above paragraph shows that the locus of legal points across the boundary components $\Delta_i(x)$ forms a (necessarily trivial) fiber bundle over $D_\delta$ with fiber a union of $\abs{n_i}$ disjoint open intervals. It is therefore possible to choose a continuously-varying system of legal basepoints across all fibers of the Milnor fibration. As explained above, this data allows for a notion of parallel transport of {\em legal} arcs, and then it is clear that the winding number of a legal arc is invariant under the action of $\Gamma_f$. We have therefore shown that the relative winding number function $\phi$ associated to the canonical framing $\xi_f$ is invariant under $\Gamma_f$, and by \Cref{proposition:relwnf},  $\Gamma_f$ stabilizes the relative isotopy class of $\xi_f$ as claimed. 
	\end{proof}

	\begin{corollary}\label{corollary:vcadmiss}
	Let $a \subset \Sigma(f)$ be a nonseparating curve that can be represented as a vanishing cycle for some nodal degeneration. Then $a$ is admissible for the Hamiltonian framing $\phi$ of \Cref{prop:framed}, i.e. $\phi(a) = 0$ (necessarily for either choice of orientation of $a$).
	\end{corollary}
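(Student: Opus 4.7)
The plan is to combine three results already in hand: the Picard--Lefschetz formula, \Cref{lemma:framedmonodromy}, and \Cref{remark:admissible}. The argument is essentially a one-liner once these are assembled.

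First, I would recall that if $a \subset \Sigma(f)$ can be represented as a vanishing cycle for some nodal degeneration, then the Picard--Lefschetz formula (discussed in the introduction) identifies the monodromy around that nodal point in the discriminant with the Dehn twist $T_a$. In particular, $T_a$ lies in the geometric monodromy group $\Gamma_f$.

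Next, invoking \Cref{lemma:framedmonodromy}, we have the containment $\Gamma_f \le \Mod(\Sigma(f))[\phi]$, so $T_a$ preserves the relative isotopy class of the Hamiltonian framing $\phi$. Thus $T_a \cdot \phi = \phi$.

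Finally, I would apply \Cref{remark:admissible}: for a nonseparating simple closed curve $c$, the twist-linearity formula shows that $T_c$ preserves $\phi$ only when $\phi(c) = 0$. Applying this with $c = a$ (which is nonseparating by hypothesis), we conclude $\phi(a) = 0$, as desired. There is no real obstacle here; the entire content is packaged into the earlier lemmas, and this corollary is just the contrapositive of \Cref{remark:admissible} combined with Picard--Lefschetz.
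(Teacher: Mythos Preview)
Your proof is correct and follows exactly the same line as the paper's own argument: the paper's proof reads ``If $a$ is a vanishing cycle, then $T_a \in \Gamma_f$ and hence $T_a$ preserves $\phi$ by \Cref{lemma:framedmonodromy}. By \Cref{remark:admissible}, $a$ is admissible.'' You have simply made the Picard--Lefschetz step explicit, which is fine.
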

	\begin{proof}
	If $a$ is a vanishing cycle, then $T_a \in \Gamma_f$ and hence $T_a$ preserves $\phi$ by \Cref{lemma:framedmonodromy}. By \Cref{remark:admissible}, $a$ is admissible. 
	\end{proof}

	\section{Constructing the core}\label{section:base}
	\Cref{theorem:Egens} provides a criterion for a configuration of admissible curves $\mathcal C \subset (\Sigma(f), \phi)$ to generate the framed mapping class group $\Mod(\Sigma(f))[\phi]$, asserting that it suffices for $\mathcal C$ to be an $h$-assemblage of type $E$ for $h \ge 5$ (recall \Cref{definition:config,definition:assemblage}). To describe an assemblage, one must specify two pieces of data: the {\em core} $S_k \subset \Sigma(f)$, by definition a regular neighborhood of an $E$-arboreal spanning configuration, and the {\em attaching sequence}, the order in which the remaining curves are attached on to the subsurface. In this section we tackle the first of these problems, describing in the ``general case'' (multiplicity at least $5$) how to find an $E$-arboreal spanning configuration of genus $5$. Throughout, we assume that the genus of $\Sigma(f)$ is at least $5$, and that $f$ is not the $A_n$ or $D_n$ singularity (these latter cases are discussed in \Cref{section:AD}). 	

\para{Divides with ordinary singularities} Our approach proceeds by finding a suitable ``universal portion'' of a divide near the origin. This will require us to temporarily relax the definition of divide given above and allow $\calD \subset D^2$ to have all kinds of ``ordinary singularities'' instead of just double points. An {\em ordinary singularity} is one that is topologically equivalent to $x^r-y^r$, that is, a union of $r$ pairwise-transverse lines. The method developed by A'Campo in \cite{NorGI,NorGII} produces divides for plane curve singularities that are generic deformations of divides with ordinary singularities. We use this fact to prove in \Cref{prop:generating_tree_mult_5} that when the multiplicity of a singularity $f$ is at least $5$, then $f$ admits a divide $\mathcal D$ for which there is a subgraph $\mathcal C_0 \le \ld$ that has the structure of an $E$-arboreal spanning configuration on a surface of genus $5$. For lower multiplicity, some more ad-hoc arguments are required; these are deferred to \Cref{sec:sporadic}.

The basic existence result for divides with ordinary singularities is the following.

\begin{lemma}\label{lem:divide_with_ordinary}
	Let $f: \C^2 \to \C$ be a real isolated plane curve singularity of multiplicity $m$. Then $f$ admits a real deformation $\tilde{f}$ such that
	\begin{enumerate}
		\item \label{it:i}The set $f^{-1}(0) \cap D^2 \subset \R^2 \subset \C^2$ is a divide $\tilde\calD$ with ordinary singularities.
		\item \label{it:ii} All branches of $f$ pass through the origin $(0,0) \in \R^2$.
		\item \label{it:iii}In a neighborhood of the origin, $\tilde{f}$ is topologically equivalent to the ordinary singularity $x^{m}-y^{m}$. 
	\end{enumerate}  
\end{lemma}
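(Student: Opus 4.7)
The plan is to construct $\tilde f$ in two stages. First, modify the lowest-degree homogeneous part of $f$ so that the singularity at origin becomes an ordinary $m$-fold point; then apply a generic small perturbation of higher-order terms to eliminate any non-ordinary singularities elsewhere in $D^2$.

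Write $f = f_m + h$, with $f_m$ the degree-$m$ homogeneous part (nonzero since the multiplicity is $m$) and $h$ the remainder of order $\ge m+1$. Because $f$ is a real singularity with real local branches, each branch of $f$ has a real tangent line at origin, and so $f_m$ factors over $\R$ as $f_m = \prod_i \ell_i^{m_i}$ with real linear forms $\ell_i$ and $\sum_i m_i = m$. For each repeated factor $\ell_i^{m_i}$, choose a real linear form $\ell_i^\perp$ transverse to $\ell_i$ and replace it by $\prod_{k=1}^{m_i}(\ell_i + \epsilon_{i,k}\ell_i^\perp)$ for small distinct $\epsilon_{i,k} \in \R$; after arranging that all $m$ resulting linear factors are pairwise non-proportional, call the product $\tilde g_m$. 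Set
\[
\tilde f_0 := \tilde g_m + h.
\]
Since $\tilde g_m - f_m$ is small, this is a small real deformation of $f$. The leading term of $\tilde f_0$ at the origin is $\tilde g_m$, a product of $m$ distinct real lines, so $\tilde f_0$ has an ordinary $m$-fold singularity at origin, topologically equivalent to $x^m-y^m$, and all $m$ of its local branches at origin pass through the origin. This establishes conditions (\ref{it:ii}) and (\ref{it:iii}).

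For condition (\ref{it:i}), set $\tilde f := \tilde f_0 + \delta p$ for a real polynomial $p$ of order $\ge m+1$ at origin and small $\delta > 0$. Because $p$ vanishes to order strictly greater than $m$ at origin, the leading term of $\tilde f$ at origin is still $\tilde g_m$, so (\ref{it:ii}) and (\ref{it:iii}) persist. For (\ref{it:i}), use a standard transversality argument: within the finite-dimensional space of real polynomials of bounded degree vanishing to order $\ge m+1$ at origin, the condition that $\tilde f_0 + \delta p$ have a non-ordinary singularity (cusp, tacnode, non-transverse multiple crossing, etc.) at a fixed point $x \in D^2$ is codimension at least $3$ on $p$; sweeping over $x \in D^2$, the locus of perturbations producing any non-ordinary singularity inside $D^2$ still has positive codimension, so a generic choice of $p$ yields $\tilde f^{-1}(0) \cap D^2$ with only ordinary singularities.

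The main technical obstacle is the transversality argument of the second stage: one must verify that the chosen perturbation family has enough freedom to realize transversality with every non-ordinary singularity stratum. This is handled by taking $p$ to range over a sufficiently large-dimensional space of polynomials of high enough degree, after which standard Bertini--Sard machinery guarantees generic transversality. A secondary subtlety is ensuring that no new connected components of $\tilde f^{-1}(0)\cap D^2$ appear disjoint from origin, which is arranged by choosing $\delta$ small enough that $\tilde f$ is $C^0$-close to $\tilde f_0$ on the compact disk $D^2$, so that every local branch at origin extends to a component that deforms continuously with $\delta$.
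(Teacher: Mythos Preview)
The paper does not give its own proof of this lemma; it simply cites \cite[Corollary 1.11]{Jong} and, for a detailed construction, \cite[Section 2.1]{Cast}. The method underlying those references is A'Campo's: resolve the singularity by a sequence of blow-ups, inductively deform the strict transform, and push the result back down. Your approach---split the tangent cone by perturbing the initial form, then make a generic higher-order correction---is genuinely different and more elementary in spirit, but as written it has two gaps.

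First, your verification of condition~(\ref{it:ii}) is not what is required. You check that the $m$ \emph{local} analytic branches of $\tilde f_0$ at the origin pass through the origin, which is tautological. What is needed is that every \emph{global} component of the real curve $\tilde{\mathcal D}=\tilde f^{-1}(0)\cap D^2$ passes through the origin. You touch on this only for the stage-2 perturbation, and even there you argue only that the branches through $0$ persist under a $C^0$-small change; you do not rule out \emph{new} real zero components appearing away from $0$ (and $C^0$-closeness alone does not prevent this). The same issue is already present at stage~1: replacing $f_m$ by $\tilde g_m$ could in principle create real zeros of $\tilde f_0$ away from the original real branch locus of $f$. This is fixable---bound $|f|$ from below on the complement of a tubular neighborhood of the real branches in $D^2$ and take the $\epsilon_{i,k}$ small enough---but it has to be argued, for both stages. (Incidentally, you invoke the hypothesis that $f$ has real local branches when factoring $f_m$; this is not part of the lemma's statement, so you should say explicitly that you are using the A'Campo--Gussein-Zade reduction recorded just before the lemma.)

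Second, the transversality step hides a uniformity issue. A perturbation $p$ of order $\ge m+1$ at the origin has vanishingly small effect on low-order jets at points near $0$, so if stage~1 produced non-ordinary singular points of $\tilde f_0^{-1}(0)$ accumulating at the origin, your family of perturbations could not repair them. The correct order of operations is: first note that because $\tilde f_0$ has an \emph{ordinary} $m$-fold point at $0$, there is a punctured disc $B_\rho(0)\setminus\{0\}$ on which the zero set is $m$ smooth pairwise-transverse arcs with no singular points; only then run the Bertini--Sard argument on the compact set $D^2\setminus B_\rho(0)$, where the jet-evaluation map from your perturbation space is uniformly surjective. Without isolating this neighborhood first, the appeal to generic transversality is not justified.
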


\begin{proof}
The lemma is a consequence of \cite[Corollary 1.11]{Jong}. For a detailed construction of such divide with ordinary singularities following A'Campo's technique one may look at \cite[Section 2.1]{Cast}.
\end{proof}

\begin{figure}[ht!]
	\includegraphics[scale=1]{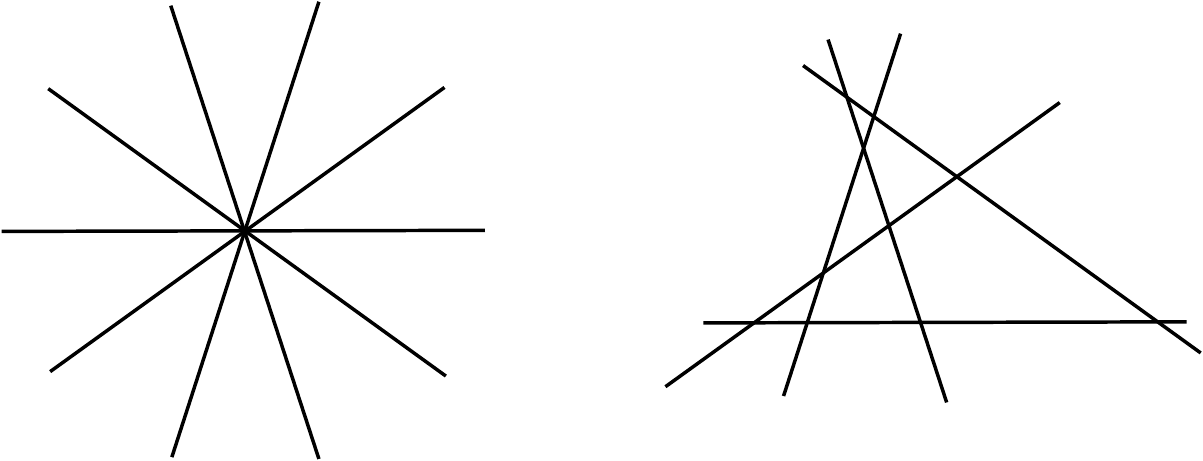}
	\caption{On the left the ordinary singularity $x^5-y^5$. On the right a  divide after a generic deformation.}
	\label{figure:ordinary_deformation}
\end{figure}

Of course, a divide with ordinary singularities can be deformed into a divide in the classical sense. The lemma below asserts that moreover, the deformations at the various singular points can be performed independently of each other. 

\begin{lemma}\label{lem:independence_deformations}
	The process of deforming from a divide with ordinary singularities to a divide can be done at each ordinary singularity in an independent way. That is, we may perform perturbations of the divide near each ordinary singularity assuming that they do not affect the divide outside a small neighborhood of the ordinary singularity.
\end{lemma}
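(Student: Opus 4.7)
The plan is a cutoff/partition-of-unity argument built on the locality of the deformation of an ordinary singularity. First, enumerate the ordinary singularities $p_1,\dots,p_N$ of the divide with ordinary singularities $\tilde{\mathcal D}$ produced by \Cref{lem:divide_with_ordinary}. Because each $p_i$ is an isolated real singular point locally modelled on $x^{r_i}-y^{r_i}$ (hence conical), I can choose pairwise disjoint closed disks $U_i \subset D^2$ centered at $p_i$ such that $\tilde f$ has no other critical points in $U_i$ and such that $\tilde{\mathcal D}\cap U_i$ meets $\partial U_i$ transversely in finitely many arcs. Outside $\bigcup_i U_i$ the divide $\tilde{\mathcal D}$ already consists only of smooth arcs in general position, so no further perturbation is needed there.

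Next, at each $p_i$ the concrete deformation is of the standard form obtained by writing $\tilde f|_{U_i}=\prod_{j=1}^{r_i}\ell_j^{(i)}$ as a product of transverse real linear forms and replacing it by a small generic translation
\[
\tilde f^{(i)} \;=\; \prod_{j=1}^{r_i}\bigl(\ell_j^{(i)}-\varepsilon_{ij}\bigr),
\]
which has only real double points in $U_i$. To assemble these into a single global deformation, choose slightly larger disks $V_i \supset U_i$ whose closures are still pairwise disjoint, together with smooth cutoffs $\rho_i\colon D^2\to[0,1]$ supported in $V_i$ and identically $1$ on $U_i$. Define
\[
\tilde f' \;=\; \tilde f + \sum_{i=1}^{N} \rho_i\,\bigl(\tilde f^{(i)}-\tilde f\bigr).
\]
Then $\tilde f'=\tilde f$ outside $\bigcup V_i$, while $\tilde f'=\tilde f^{(i)}$ on $U_i$, so on each $U_i$ one sees exactly the chosen local deformation, and outside $\bigcup V_i$ the divide is literally unchanged.

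The essential point to check — and the one mild technical obstacle — is that the cutoff on each collar $V_i\setminus U_i$ does not introduce spurious critical points or tangencies in the zero locus. But $\tilde f$ has no critical points on the compact collar by construction, so by continuity $\|\nabla \tilde f\|$ is bounded below there by some $\delta_i>0$; taking the parameters $\varepsilon_{ij}$ small enough makes $\tilde f^{(i)}-\tilde f$ arbitrarily $C^1$-small on $V_i$, hence $\nabla \tilde f'$ remains nonvanishing on the collar and the zero locus of $\tilde f'$ there stays a small $C^1$-perturbation of the arcs of $\tilde{\mathcal D}$, still meeting $\partial U_i$ transversely. Since the $V_i$ are pairwise disjoint, the choices of $\varepsilon_{ij}$ at distinct $p_i$ are completely decoupled, which is precisely the independence claimed. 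Thus each ordinary singularity of $\tilde{\mathcal D}$ may be perturbed individually within its own neighborhood, yielding a classical divide that agrees with $\tilde{\mathcal D}$ outside these neighborhoods and realizes any prescribed generic local perturbation at each $p_i$.
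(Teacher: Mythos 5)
Your gluing construction is fine as far as it goes: the $C^1$-smallness estimate on the collars is correct (the cutoff derivatives $\nabla\rho_i$ are bounded independently of the $\varepsilon_{ij}$, and $\tilde f^{(i)}-\tilde f\to 0$ in $C^1$ on $\overline{V_i}$ as $\varepsilon_{ij}\to 0$), so no spurious critical points or tangencies appear outside the $U_i$, and the choices at distinct singular points are indeed decoupled. The only cosmetic issue there is that near an ordinary singularity the branches are transverse smooth arcs rather than literal lines, so you should write $\tilde f|_{U_i}=u\cdot\prod_j g_j$ with $u$ a unit and each $g_j$ cutting out one branch, and translate the $g_j$ rather than linear forms; this changes nothing essential.

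The genuine gap is that you have produced a smooth function $\tilde f'$ whose zero set is the desired picture, but not a \emph{real morsification of $f$} realizing that picture. The lemma is only useful (e.g.\ in \Cref{prop:generating_tree_mult_5}) if the perturbed object is still a divide \emph{of the singularity $f$}, so that the distinguished curves $a_v$ in A'Campo's model are honest vanishing cycles of $f$; that requires the perturbation to be realized by an analytic deformation of $f$ inside the versal family, with all critical points real and Morse and all saddles on the zero level. A cutoff function leaves this family, and nothing in your argument returns you to it. The standard way to close this gap --- and, as far as one can tell, the content of the source the paper cites (the paper gives no proof of its own, deferring to Remark 2.1.19 of Castellini's thesis, attributed to A'Campo; compare also the use of \cite[Corollary 1.11]{Jong} in \Cref{lem:divide_with_ordinary}) --- is openness of versality: the germ of the global deformation space of $\tilde f$ submerses onto the product of the local deformation spaces at the ordinary singular points, so any prescribed tuple of independent local perturbations is realized by a single small analytic deformation, which for generic real parameters is a real morsification. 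Your smooth model is a useful picture of what that deformation does, but the versality statement is the step that actually proves the lemma.
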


\begin{proof}
	This is the content of \cite[Remark 2.1.19]{Cast} whose proof, according to Castellini, was to communicated to them by A'Campo.
\end{proof}

\begin{proposition}\label{prop:generating_tree_mult_5}
Let $f:\C^2\to \C$ be an isolated singularity with multiplicity $m \geq 5$. Then there exists a divide for $f$ whose intersection diagram contains an $E$-arboreal spanning configuration on $10$ vertices, determining a subsurface of genus $5$.
\end{proposition}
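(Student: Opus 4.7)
The plan is to apply Lemma \ref{lem:divide_with_ordinary} to produce a divide $\tilde{\mathcal D}$ with ordinary singularities for $f$ in which, near the origin, all $m$ branches meet transversely as the ordinary $m$-uple point $x^m - y^m = 0$. By Lemma \ref{lem:independence_deformations}, I may then deform this central ordinary singularity independently of the rest of $\tilde{\mathcal D}$, replacing it in a small neighborhood of the origin by an arrangement of $m$ real lines in general position inside the disk. The result is a classical divide $\mathcal D$ for $f$ whose intersection graph $\Lambda_{\mathcal D}$ contains a local contribution consisting of $\binom{m}{2}$ saddle vertices (one for each transverse crossing of the $m$ lines) together with $\binom{m-1}{2}$ region vertices (one for each bounded region cut out by the arrangement).

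Since $m \ge 5$, I would restrict attention to any five of these lines and work entirely inside the subgraph of $\Lambda_{\mathcal D}$ supported on the $10$ saddles and $6$ bounded regions determined by this sub-arrangement. The adjacency rules for $\Lambda_{\mathcal D}$ here are completely controlled by planar incidence data: a saddle vertex is adjacent to a region vertex exactly when the corresponding double point lies on the boundary of that region, two region vertices are adjacent exactly when the regions share an edge of $\mathcal D$, and two saddle vertices are adjacent exactly when the corresponding double points are joined by an edge of $\mathcal D$ with no intervening crossings. In particular, the subgraph is determined purely by the combinatorics of five lines in general position, and the matter reduces to a finite inspection.

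The main step is then to exhibit, inside this local piece of $\Lambda_{\mathcal D}$, an induced subgraph on ten vertices isomorphic to one of the tripod trees $(1,4,4)$, $(1,2,6)$, $(2,3,4)$, or $(2,2,5)$ listed in Lemma \ref{lemma:tripods}. A natural candidate is to walk along a single line $\ell_1$ of the arrangement through its four crossings with $\ell_2,\dots,\ell_5$ to form one long branch, then use a chain of region vertices adjacent to these crossings to form the other two branches of the tripod, with the trivalent vertex placed at a crossing on $\ell_1$. Once such a configuration is found, the fact that it is an \emph{induced} subgraph is automatic: distinct pairs of lines meet in a single point, so no unwanted extra adjacencies arise between the chosen vertices. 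Lemma \ref{lemma:tripods} then guarantees that a regular neighborhood of the corresponding vanishing cycles is a genus-$5$ subsurface and that the configuration is $E$-arboreal.

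The principal obstacle I anticipate is precisely this combinatorial exhibition: one must draw the five-line arrangement explicitly, label saddles and regions, and pick out the correct ten vertices realizing one of the allowed tripod shapes. This is a bounded, purely finite check, but it is the only part of the argument that does not follow formally from the previously cited lemmas; the rest of the proof is simply assembling Lemma \ref{lem:divide_with_ordinary}, Lemma \ref{lem:independence_deformations}, and Lemma \ref{lemma:tripods} around this explicit local model.
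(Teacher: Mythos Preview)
Your overall architecture---invoke Lemma~\ref{lem:divide_with_ordinary}, deform the central ordinary point locally via Lemma~\ref{lem:independence_deformations}, and then read off a tripod from Lemma~\ref{lemma:tripods}---matches the paper's, but there are two genuine gaps in the execution.

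\textbf{Restricting to five lines does not give a subgraph of $\Lambda_{\mathcal D}$.} When $m>5$ and you place all $m$ lines in general position, the ``$6$ bounded regions determined by this sub-arrangement'' of five chosen lines are \emph{not} vertices of $\Lambda_{\mathcal D}$: each such region is cut into several pieces by the remaining $m-5$ lines, so it corresponds to many region vertices, not one. Likewise, two crossings that are consecutive along a line $\ell_i$ in the $5$-line picture need not be adjacent in $\Lambda_{\mathcal D}$, since one of the lines $\ell_6,\dots,\ell_m$ may cross $\ell_i$ between them. So the ``subgraph supported on the $10$ saddles and $6$ bounded regions'' you describe does not exist inside $\Lambda_{\mathcal D}$ as stated. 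The paper avoids this by first \emph{translating} $L_6,\dots,L_m$ a small distance away so that they do not enter the disk where $L_1,\dots,L_5$ are being deformed; only then does the local intersection graph of the five lines sit inside $\Lambda_{\mathcal D}$ as a complete subgraph.

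\textbf{The induced-subgraph claim is not automatic, and the generic arrangement is not what is used.} Your sentence ``distinct pairs of lines meet in a single point, so no unwanted extra adjacencies arise'' does not establish that the candidate tripod is an induced subgraph: adjacency in $\Lambda_{\mathcal D}$ is governed by incidences of crossings and regions, not by the number of intersection points of the underlying lines, and a priori many pairs among your ten chosen vertices could be joined by edges you did not want. The paper sidesteps this entirely by \emph{not} using the generic $5$-line arrangement. Instead it performs a specific, hand-drawn deformation of $L_1,\dots,L_5$ (Figure~\ref{figure:divide_o5}) engineered so that a $(1,2,6)$ tripod appears directly as a complete subgraph of the resulting intersection diagram---no search, no verification of absent edges. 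Your proposal leaves exactly this step as the ``principal obstacle,'' and the shortcut you offer for it is incorrect.
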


\begin{proof}
	By \Cref{lem:divide_with_ordinary} there exists a divide $\tilde\calD$ with ordinary singularities for $f$, and a point $p \in \tilde\calD$ such that a neighborhood of $p$ looks like the divide of the ordinary singularity $x^{m}+ y^{m}$, that is, a collection of $m$ segments intersecting transversely at $p$.
	
		Let $\xi_1, \ldots, \xi_{2m}$ be the $2m$-th roots of unity. Then the segments of the previous paragraph can be identified with  the union of the $m$ segments $L_i$ that pass through $\xi_i$ and $-\bar{\xi_i} = \xi_{i+m}$ for $i\in\{1, \ldots,m\}$. With this notation $L_1$ corresponds to a horizontal segment lying in the $x$-axis in $\R^2$.
	
	    Take the segments $L_6, \ldots, L_{m}$ and translate them a distance $\epsilon$ along $L_1$ to the negative direction in the $x$ axis. Let $\epsilon'$ be the distance from the union of these segments to $p$.
		
		Now we perturb the segments $L_1, \ldots, L_5$ as in \Cref{figure:divide_o5}. We take the deformation small enough so that all double points emanating from this deformation occur in a small disk of radius $\epsilon'/2$ centered at $p$. Now we perform  perturbations at all remaining ordinary singularities of the divide with ordinary singularities in order to obtain a divide with only double point singularities. Applying \Cref{lem:independence_deformations}, we ensure that after all these perturbations have been made, no new double points appear in the small disk of radius $\epsilon'/2$ centered at $p$.
		
		We have therefore constructed a divide $\mathcal D$ for $f$ such that the red graph of \Cref{figure:divide_o5} is contained as a complete subgraph of the intersection diagram $\ld$. To conclude, we observe that this graph is the tripod graph of type (1,2,6), which is an $E$-arboreal spanning configuration by \Cref{lemma:tripods}.
			 \begin{figure}[ht!]
	\includegraphics[scale=1]{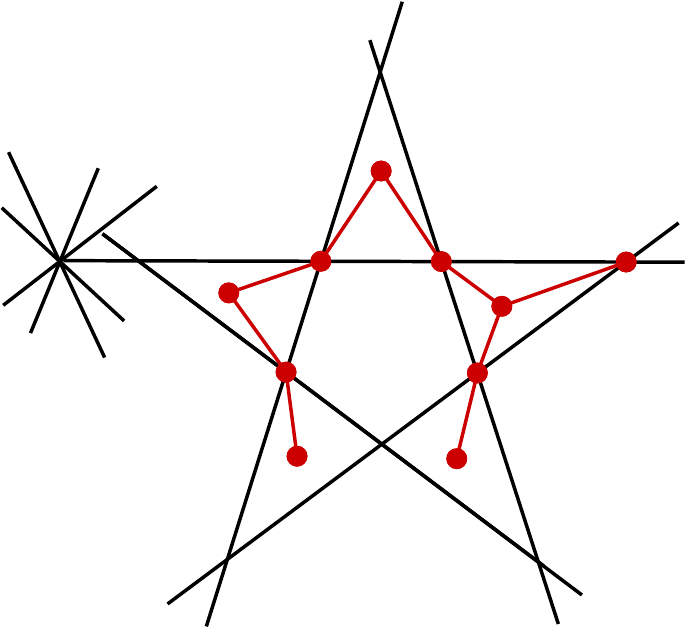}
	\caption{The deformation of $x^m - y^m$ to a divide whose intersection diagram contains a $10$-vertex $E$-arboreal spanning tree (in red). On the left part we see the displaced segments $L_6, \ldots, L_{m}$.}
	\label{figure:divide_o5}
\end{figure}
\end{proof}

We note that \Cref{prop:generating_tree_mult_5} does not address the singularities of multiplicity less or equal than $4$. So as to not break the flow of the paper, and since these cases are analyzed with more ad-hoc methods, we treat them in \Cref{sec:sporadic}. There, we will establish the following counterpart to \Cref{prop:generating_tree_mult_5} (the only substantive difference between these two statements is that in \Cref{prop:generating_tree_mult_5}, the curves in the core are distinguished vanishing cycles for the divide, whereas in \Cref{corollary:lowmult}, the curves are merely in the orbit of such curves under the monodromy action).

	\begin{proposition}\label{corollary:lowmult}
	Let $f:\C^2 \to \C$ be an isolated plane curve singularity of multiplicity $m \le  4$ such that $\Sigma(f)$ has genus at least $5$. If $f$ is not the $A_n$ or $D_n$ singularity, then $f$ admits a divide $\mathcal D$ with a complete subgraph $\mathcal C_0 \le \ld$ such that $\Sigma(\mathcal C_0)$ has genus $5$ and such that there is an $E$-arboreal spanning configuration on $\Sigma(\mathcal C_0)$ consisting of vanishing cycles.
	\end{proposition}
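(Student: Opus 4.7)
The plan is to treat the remaining singularities case-by-case, organized by multiplicity and branch structure. Since multiplicity $1$ gives a smooth function and multiplicity $2$ forces $f$ to be of type $A_n$ (excluded by hypothesis), only multiplicities $m=3$ and $m=4$ must be considered, and within $m=3$ the $D_n$ family is excluded as well.

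First I would enumerate the relevant topological types. By the classification via Puiseux pairs of each branch and pairwise intersection multiplicities between branches (see \Cref{eq:puiseux_ineq} and the surrounding discussion), the singularities of multiplicity $3$ split into: irreducible cusps with Puiseux data beginning with $p_1 = 3$; two-branch singularities consisting of a smooth branch together with an $A$-type cusp; and three-branch singularities built from three smooth branches with various pairwise intersection multiplicities. A similar enumeration applies for multiplicity $4$, with branch-multiplicity partitions $(4)$, $(1,3)$, $(2,2)$, $(1,1,2)$, and $(1,1,1,1)$. Although each class contains infinitely many topological types (since the Puiseux exponents and pairwise intersection multiplicities are unbounded), the members of a given class share the same local structure near the origin of the divide, and only this local piece will be needed to construct the core.

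For each class I would produce an explicit divide via A'Campo's method, possibly permitting immersed circles as in \Cref{rem:extended_version}, and select a subgraph $\mathcal{C}_0 \leq \ld$ for which $\Sigma(\mathcal{C}_0)$ has genus exactly $5$. The corresponding distinguished vanishing cycles form some configuration on $\Sigma(\mathcal{C}_0)$ whose intersection graph is read off the divide via \Cref{lemma:fiberproperties}. If this intersection graph does not already contain one of the tripods from \Cref{lemma:tripods}, I would modify the configuration by replacing certain distinguished vanishing cycles with their images under iterated Dehn twists about other distinguished vanishing cycles. By the Picard--Lefschetz formula such twists lie in the geometric monodromy group $\Gamma_f$, so their action sends vanishing cycles to vanishing cycles; and since all the twists involved are supported in $\Sigma(\mathcal{C}_0)$, the modified configuration still spans the same subsurface.

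The main obstacle will be verifying, for each of the infinite families, that a suitable finite sequence of Dehn-twist modifications exists and produces an $E$-arboreal spanning configuration of one of the types catalogued in \Cref{lemma:tripods}. This will require an explicit understanding of the polygonal structure on $\Sigma(\mathcal{C}_0)$ via \Cref{construction:milnorfiber} and a direct check that the new curves have the required intersection pattern. The delicate cases will be those with small Milnor number for which $\Sigma(f)$ has genus just above $5$: here there is little room to maneuver, and different families may require different tripod types $(1,4,4)$, $(1,2,6)$, $(2,3,4)$, or $(2,2,5)$ from \Cref{lemma:tripods}. Still, the finite menu of tripods combined with the flexibility of the monodromy action should be enough to cover all the sporadic cases.
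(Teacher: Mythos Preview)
Your outline for multiplicity $3$ matches the paper's approach closely: the paper also splits by the number of branches, classifies each family explicitly (via Puiseux data or pairwise intersection multiplicities), produces divides using Chebyshev morsifications, and then modifies the distinguished vanishing cycles by Dehn twists to obtain one of the tripods of \Cref{lemma:tripods}.

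For multiplicity $4$, however, the paper takes a genuinely different route from your branch-partition enumeration $(4),(1,3),(2,2),(1,1,2),(1,1,1,1)$. Instead of tracking branches at all, the paper deforms the ordinary singularity at the origin into four lines in general position (as in \Cref{lem:divide_with_ordinary} and \Cref{lem:independence_deformations}) and then observes, via a count of bounded regions versus genus, that the divide must contain at least two additional bounded regions adjacent to this four-line configuration. Up to \emph{admissible homotopy} of the divide (a tool you do not mention), the placement of these extra regions falls into just five local pictures, independent of the global branch structure. This reduces the entire multiplicity-$4$ analysis to five explicit diagrams, each handled by a short computation. Your branch-partition scheme would in principle work, but it multiplies the casework substantially and obscures the uniformity that the ordinary-singularity viewpoint provides.

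There is also a gap in technique. You describe the Dehn-twist modifications only abstractly (``replacing certain distinguished vanishing cycles with their images under iterated Dehn twists''), but the paper isolates a specific combinatorial move, \emph{triangle toggling}, together with a coherence lemma guaranteeing that all triangles in an intersection graph arising from a divide can be toggled. This is what makes the ``finite sequence of Dehn-twist modifications'' you allude to actually findable: one reads the required toggles directly off the intersection graph rather than searching the mapping class group. Without naming this mechanism (and the companion tool of admissible homotopies of divides), your proposal leaves precisely the hard part---the explicit verification you flag as ``the main obstacle''---entirely unaddressed.
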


	\section{The attaching sequence} \label{section:stabilize}
	We recall the outline of the proof of \Cref{theorem:main} presented at the start of \Cref{section:base}. Following the work of the previous section (\Cref{prop:generating_tree_mult_5}, \Cref{corollary:lowmult}), we have succeeded in producing the core subsurface $\Sigma(\mathcal C_0) \le \Sigma(\mathcal D)$ of our assemblage. The task in this section is to describe the attaching sequence, i.e. to specify the order in which the subsequent distinguished vanishing cycles are attached to the core via stabilization.
	
	Taken together, \Cref{lemma:legal,lemma:attaching} below show that it is always possible to express $\Sigma(f)$ via a sequence of stabilizations obtained by attaching further vanishing cycles in the divide $\mathcal D$ for $f$ to $\Sigma(\mathcal C_0)$, thereby setting the stage for an application of \Cref{theorem:Egens}.
		
	\subsection{Stabilization and legal attachments}\label{subsection:stabilize1} We discuss here a combinatorial criterion (\Cref{lemma:legal}) under which attaching a vanishing cycle $a_v$ to a subsurface $\Sigma(\mathcal C) \subset \Sigma(\mathcal D)$ yields a stabilization. As \Cref{lemma:legal} shows, only the {\em augmented} dual graph $\Lambda_{\mathcal D}^+$ contains sufficient information, which is why we work in this section with $\Lambda_{\mathcal D}^+$ instead of the perhaps more intuitive $\Lambda_{\mathcal D}$.

	Let $\mathcal P$ be a planar graph and $v \in \mathcal P$ a vertex. The {\em combinatorial tangent space} $T_v \mathcal P$ is the cyclic planar graph whose vertex set consists of the adjacent vertices to $v$, with $w$ and $w'$ adjacent if and only if $w$ and $w'$ are contained in the same face of $\mathcal P$.

	\begin{definition}[Colored, uncolored sets]\label{definition:colored}
		Let $\mathcal C \subset \Lambda_{\mathcal D}^+$ be a subgraph, and let $v \in \Lambda_{\mathcal D}^+ \setminus \mathcal C$ be a vertex. A vertex of $T_v \Lambda_{\mathcal D}^+$ is said to be {\em colored} if the adjacent vertex of $\Lambda_{\mathcal D}^+$ is contained in $\mathcal C$, and {\em uncolored} otherwise. The {\em colored set} $C(\mathcal C, v)$ is the complete subgraph of $T_v \Lambda_{\mathcal D}^+$ spanned by the colored vertices; the {\em uncolored set} $U(\mathcal C, v)$ is defined analogously.
	\end{definition}
	
	\begin{definition}[Legal]\label{definition:legal}
		We say that $v$ is {\em legal rel. $\mathcal C$} if $C(\mathcal C,v)$ is nonempty and connected.
	\end{definition}
	For an illustration of this notion, refer back to \Cref{figure:legalproof2}. There, $C(\mathcal C,v_1)$ has two components, so $v_1$ is not legal rel $\mathcal C$, but $v_2$ and $v_3$ are legal. Observe that correspondingly, $a_{v_1}$ enters and exits $\Sigma(\mathcal C)$ twice, while $a_{v_2}$ and $a_{v_3}$ only do so once. We encode this latter observation in the following lemma.

	\begin{lemma}\label{lemma:legal}
		Let $\mathcal C \subset \Lambda_{\mathcal D}^+$ be a connected subgraph not containing $v_\infty$ and let $\Sigma(\mathcal C) \subset \Sigma(\mathcal D)$ be the corresponding subsurface. Let $v \in \Lambda_{\mathcal D}^+ \setminus \mathcal C$ be a bounded vertex, and let $a_v \subset \Sigma(\mathcal D)$ denote the corresponding vanishing cycle. Then the number of components of $C(\mathcal C, v)$ is equal to the number of components of $a_v \cap \Sigma(\mathcal C)$. In particular, $v$ is legal rel. $\mathcal C$ if and only if $a_v \cap \Sigma(\mathcal D)$ is a single component - if $U(\mathcal C,v)$ is nonempty, $a_v \cap \Sigma(\mathcal D)$ is an arc, and otherwise $a_v$ is contained in $\Sigma(\mathcal D)$.
	\end{lemma}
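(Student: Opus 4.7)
The plan is to trace the curve $a_v$ around its cyclic trip through $\Sigma(\mathcal D)$ and decide, at each step, whether it lies inside or outside $\Sigma(\mathcal C)$, using \Cref{lemma:fiberproperties} to translate between the polygonal structure of $\Sigma(\mathcal D)$ and the combinatorics of $\Lambda_{\mathcal D}^+$. By \Cref{construction:milnorfiber}, the curve $a_v$ alternately crosses vertical edges and follows horizontal edges. Item (2) of \Cref{lemma:fiberproperties} identifies the vertical edges it crosses, in cyclic order, with the edges of $\Lambda_{\mathcal D}^+$ incident to $v$, hence with the vertices of $T_v \Lambda_{\mathcal D}^+$ in their cyclic order. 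The intermediate horizontal edges lie in the polygonal regions associated by item (3) to the faces of $\Lambda_{\mathcal D}^+$ containing $v$; if $w,w'$ are adjacent in $T_v$, the horizontal edge of $a_v$ linking the crossings at $w$ and $w'$ sits in the polygon corresponding to the common face containing $v$, $w$ and $w'$.

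With this dictionary in hand, I would next identify which portions of $a_v$ lie in $\Sigma(\mathcal C)$. By \Cref{construction:subsurface}, a vertical edge belongs to $\Sigma(\mathcal C)$ iff it is crossed by some $a_u$ with $u \in \mathcal C$, which by item (1) of \Cref{lemma:fiberproperties} occurs iff the corresponding neighbor of $v$ in $T_v$ is colored. A horizontal edge traversed by $a_v$ between crossings at adjacent vertices $w,w'$ of $T_v$ belongs to $\Sigma(\mathcal C)$ precisely when the ambient polygon does, i.e.\ when at least two distinct vanishing cycles $a_u, a_{u'}$ with $u,u' \in \mathcal C$ pass through that polygon. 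For a triangular face, the other two vertices are exactly $w$ and $w'$, so this condition becomes $w, w' \in \mathcal C$, i.e.\ that the edge $ww'$ of $T_v$ lies in $C(\mathcal C, v)$. (For bigonal faces, $w = w'$ is a leaf of $\tilde{\mathcal D}$; only one curve besides $a_v$ passes through, the polygon is never incorporated into $\Sigma(\mathcal C)$, and both edges to $w$ in $T_v$ are simultaneously colored or not, consistent with the statement.)

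It follows that the intersection $a_v \cap \Sigma(\mathcal C)$ corresponds bijectively, compatibly with cyclic order, to the colored subgraph $C(\mathcal C,v) \subset T_v$: its components are exactly the maximal colored arcs of the cycle $T_v$ in which every consecutive pair of vertices is joined by an edge of $C(\mathcal C,v)$, so their count matches the number of components of $C(\mathcal C,v)$. For the \textit{in particular} clause, $v$ is legal rel.\ $\mathcal C$ iff $C(\mathcal C,v)$ is nonempty and connected, iff $a_v \cap \Sigma(\mathcal C)$ is a single component. When additionally $U(\mathcal C,v)$ is nonempty, at least one transition of $a_v$ leaves $\Sigma(\mathcal C)$, so this component is an arc; when $U(\mathcal C,v)$ is empty, every vertex of $T_v$ is colored, the entire cycle $T_v$ lies in $C(\mathcal C,v)$, and $a_v$ lies entirely in $\Sigma(\mathcal C)$.

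The principal bookkeeping burden is the careful cyclic identification between $T_v$ and the path of $a_v$, especially around bigonal faces and leaves of $\tilde{\mathcal D}$, where the generic triangular analysis must be replaced by the degenerate square case described above. Once this correspondence is set up, the rest of the argument is essentially formal combinatorics on the cycle $T_v$.
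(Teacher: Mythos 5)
Your proposal is correct and follows essentially the same route as the paper's proof: both trace $a_v$ through the polygonal regions corresponding to the faces of $\Lambda_{\mathcal D}^+$ incident to $v$, in the cyclic order of the planar embedding, and match the components of $a_v \cap \Sigma(\mathcal C)$ with the maximal colored arcs of $T_v\Lambda_{\mathcal D}^+$. Your write-up is in fact more explicit than the paper's (which dispatches the counting with an ``it is easy to see''), particularly in spelling out the vertical/horizontal edge dictionary and the degenerate bigon case.
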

	
	\begin{proof}
		Consider some $v \in \Lambda_{\mathcal D}^+ \setminus \mathcal C$ and the corresponding vanishing cycle $a_v \subset \Sigma(\mathcal D)$. As $a_v$ runs through $\Sigma(\mathcal D)$, it visits the polygonal regions corresponding to the faces of $\Lambda_{\mathcal D}^+$ incident to $v$, doing so in the order prescribed by the planar embedding of $\Lambda_{\mathcal D}^+$. Given the standard model for $\Sigma(\mathcal C)$ described in \Cref{construction:subsurface}, it is easy to see that the number of times that $a_v$ enters $\Sigma(\mathcal C)$ is equal to the number of times that the corresponding point in $T_v \Lambda_{\mathcal D}^+$ changes from being uncolored to colored, so in total, the number of components of $a_v \cap \Sigma(\mathcal C)$ is equal to the number of colored regions in $T_v \Lambda_{\mathcal D}^+$ as claimed. 
	\end{proof}

	\subsection{Existence of a legal attachment}\label{subsection:stabilize2}
	\Cref{lemma:legal} gives a graph-theoretic description of when a vanishing cycle can be added to a subsurface as a stabilization. We show in \Cref{lemma:attaching} that such a ``legal'' vertex always exists.
	
	\begin{figure}[ht]
		\labellist
		\tiny
		\pinlabel $w_1$ at 30 120
		\pinlabel $w_2$ at 40 100
		\pinlabel $v_1$ at 10 105
		\pinlabel $v_N$ at 60 130
		\small
		\pinlabel $\mathcal{R}$ at 100 50
		\pinlabel $\mathcal{W}$ at 300 45
		\pinlabel $(A)$ at 100 0
		\pinlabel $(B)$ at 325 0
		\endlabellist
		\includegraphics[scale=1]{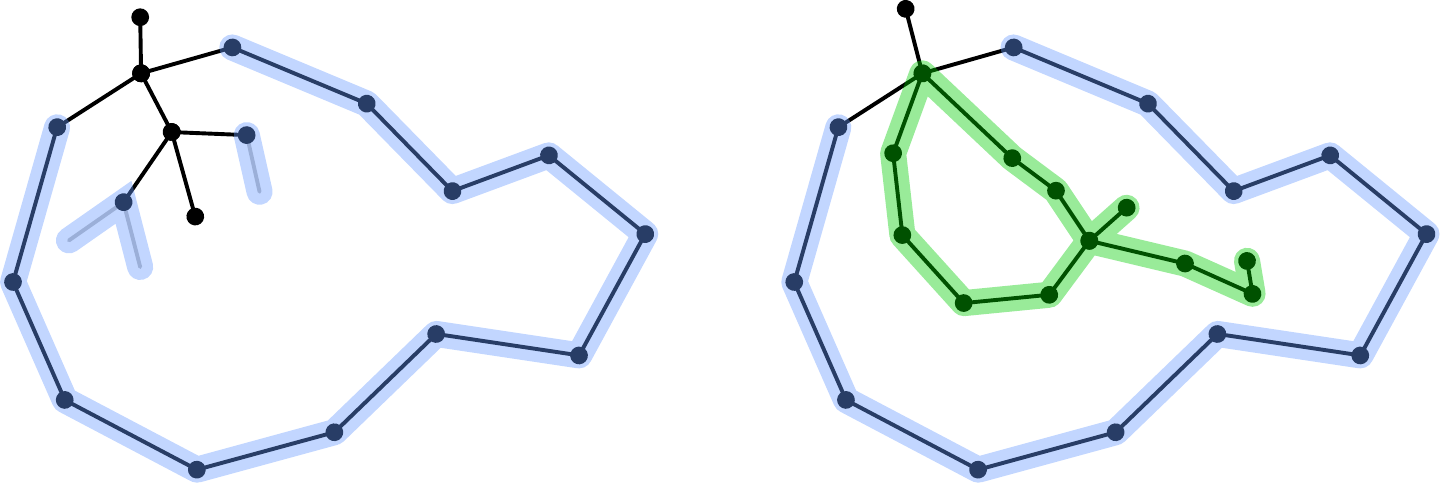}
		\caption{(A): The construction of the bounded region $\mathcal R$. The vertices of $\mathcal C$ are highlighted in blue. (B): The situation where $w_2$ is not adjacent to any elements of $\mathcal C$. At a minimum, $\mathcal W$ could consist just of a single edge between $w_1$ and $w_2$, and $v_1$ and $v_n$ could be joined by a single edge, but the face $F$ would have five sides in this case.}
		\label{figure:attaching}
	\end{figure}

	\begin{lemma}\label{lemma:attaching}
		Let $\mathcal D$ be a divide, and let $\mathcal C \subset \Lambda_{\mathcal D}^+$ be a proper connected complete subgraph. Then there is some bounded vertex $v \in \Lambda_{\mathcal D}^+ \setminus \mathcal C$ such that $v$ is legal with respect to $\mathcal C$ in $\Lambda_{\mathcal D}^+$. 
	\end{lemma}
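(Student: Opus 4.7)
\bigskip

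\noindent\textbf{Proof proposal.} The plan is to argue by contradiction, leveraging the strong constraint on bounded faces of $\Lambda_{\mathcal{D}}^+$ provided by \Cref{lemma:dualproperties}. Assume that no bounded vertex of $\Lambda_{\mathcal{D}}^+\setminus\mathcal{C}$ is legal; since $\Lambda_{\mathcal{D}}$ is connected by \Cref{lem:properties_divides_singularities}\eqref{prop:i} and $\mathcal{C}$ is a proper subgraph, we can pick a bounded vertex $v_1 \in \Lambda_{\mathcal{D}}^+\setminus\mathcal{C}$ adjacent to some $w_1 \in \mathcal{C}$. The colored set $C(\mathcal{C},v_1)$ is nonempty, so by assumption it is disconnected, and going around $v_1$ cyclically past $w_1$ we encounter a consecutive maximal string of uncolored neighbors $v_2,v_3,\ldots,v_N$ before reaching the next colored neighbor $w_2\in \mathcal{C}$.

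Because each face of $\Lambda_{\mathcal{D}}^+$ incident to $v_1$ between two consecutive neighbors is a triangle or bigon (\Cref{lemma:dualproperties}), each consecutive pair $(w_1,v_2),(v_2,v_3),\ldots,(v_N,w_2)$ is joined by an edge in $\Lambda_{\mathcal{D}}^+$ (treating the bigonal case as a degenerate limit where two neighbors coincide). Using connectedness of $\mathcal{C}$, choose a walk $\mathcal{W}$ in $\mathcal{C}$ from $w_1$ to $w_2$; then
\[
\gamma \;=\; w_1 \to v_2 \to v_3 \to \cdots \to v_N \to w_2 \to \mathcal{W} \to w_1
\]
is a closed walk in $\Lambda_{\mathcal{D}}^+$ avoiding $v_1$. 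Since $v_1$ is joined directly to each vertex $w_1,v_2,\ldots,v_N,w_2$ of $\gamma$ in cyclic order, $v_1$ sits on one side of $\gamma$ in the plane, and the other side is a bounded region $\mathcal{R}$. I would choose the various data (the walk $\mathcal{W}$ and, if necessary, the starting vertex $v_1$ and the arc $w_1,v_2,\ldots,v_N,w_2$) so that $\mathcal{R}$ is \emph{minimal}, e.g.\ contains no bounded vertex of $\Lambda_{\mathcal{D}}^+$ in its interior. This is the main obstacle: one must show minimality can be achieved, using finiteness together with a reduction step whereby any vertex of $\mathcal{C}$ or of $\Lambda_{\mathcal{D}}^+\setminus\mathcal{C}$ inside $\mathcal{R}$ either lets us shorten $\mathcal{W}$ (if colored) or lets us replace $v_1$ by a vertex closer to $\mathcal{R}$ (if uncolored, using its assumed non-legality to find a shorter cycle).

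Once $\mathcal{R}$ is minimal, its boundary $\gamma$ itself must be the boundary of a single face $F$ of $\Lambda_{\mathcal{D}}^+$. Counting the edges of $\gamma$, one finds $F$ has at least $N + \mathrm{length}(\mathcal{W}) + 1$ sides. Even in the most efficient case, where $\mathcal{W}$ is a single edge $w_1w_2$ and $N=2$ (a single uncolored neighbor $v_2$ between $w_1$ and $w_2$ around $v_1$, with $v_1$ and $v_N=v_2$ joined by a direct edge as in figure (B)), $F$ has at least $4$ sides, and more careful bookkeeping gives $5$. This contradicts \Cref{lemma:dualproperties}, which forces every bounded face of $\Lambda_{\mathcal{D}}^+$ to be a triangle or bigon. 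Thus some bounded vertex $v$ in $\Lambda_{\mathcal{D}}^+\setminus\mathcal{C}$ must have connected nonempty colored set, i.e.\ be legal with respect to $\mathcal{C}$.
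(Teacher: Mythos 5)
Your overall strategy---trap the failure of legality inside a bounded planar region and play it off against the constraint of \Cref{lemma:dualproperties}---is the same one the paper uses, and your structural observation that consecutive neighbours of a \emph{bounded} vertex of $\Lambda_{\mathcal D}^+$ are joined by an edge (every corner at such a vertex lies in a bounded, hence triangular or bigonal, face) is sound and useful. However, the step you explicitly defer is where the actual content of the proof lives, and your sketch of it has a hole. The proposed reduction---``a colored interior vertex shortens the walk; an uncolored interior vertex, being non-legal, yields a shorter cycle''---does not cover an uncolored vertex in the interior of $\mathcal R$ that is adjacent to \emph{no} vertex of $\mathcal C$: for such a vertex non-legality just means its colored set is empty, which produces no new cycle and no way to shrink the region. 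This is exactly the case the paper spends most of its effort on. It considers the connected cluster of uncolored vertices inside $\mathcal R$ reachable from the starting vertex and shows that if no vertex of that cluster besides the starting one were adjacent to $\mathcal C$, the cluster would be sealed off by a single face of $\Lambda_{\mathcal D}^+$ with at least five sides, contradicting \Cref{lemma:dualproperties}; this forces a new attachment point and a strictly smaller region, driving the descent. Some version of that argument is unavoidable in your scheme as well.

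The terminal contradiction is also miscounted. In your ``most efficient case'' ($N=2$, the walk a single edge $w_1w_2$) the cycle $\gamma = w_1 \to v_2 \to w_2 \to w_1$ has three edges, so the minimal face is a triangle---which \Cref{lemma:dualproperties} explicitly \emph{permits}. The ``$+1$'' in your count and the unexplained ``more careful bookkeeping gives $5$'' are not justified; the five-sided face arises only in the isolated-cluster subcase just described (where a pendant edge is traversed twice along the face boundary), not in your generic minimal configuration. What actually rescues the triangular case is a different mechanism: if $w_1 v_2 w_2$ bounds a face, then $w_1$ and $w_2$ become adjacent in $T_{v_2}\Lambda_{\mathcal D}^+$, so the colored set of $v_2$ acquires a nonempty connected piece, and the contradiction one must extract is ``an interior vertex is legal,'' not ``a face is too big.'' As written, your proof conflates these two mechanisms, and the face-counting one fails exactly where you invoke it.
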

	
	\begin{proof}
		We begin by emphasizing that we must produce a {\em bounded} vertex, i.e. a vertex of $\Lambda_{\mathcal D}$, but the legality we consider is with respect to the larger graph $\Lambda_{\mathcal D}^+$. In the argument below we will work exclusively with $\Lambda_{\mathcal D}^+$, and so we must take care that the vertex we identify does not correspond to the unbounded face. We will make note of this below at the appropriate stage of the argument. 
		
		Suppose that $w_1$ is a bounded vertex of $\Lambda_{\mathcal D}^+ \setminus \mathcal C$ that is adjacent to some vertex of $\mathcal C$. If $w_1$ is legal rel. $\mathcal C$, there is nothing to show. Suppose then that $v_1, v_N \in \mathcal C$ are adjacent to $w_1$ and lie in adjacent but distinct components of the colored set $C(\mathcal C, w_1)$. Since $\mathcal C$ is connected, there is a path $v_1, \dots, v_N$ of vertices of $\mathcal C$ connecting $v_1$ to $v_N$. Along with the edges joining $v_1$ and $v_N$ to $w_1$, this forms a closed loop in the plane which therefore bounds a closed planar region $\mathcal R$ containing a finite number of vertices of $\Lambda_{\mathcal D}^+$. By abuse of notation we will use $\mathcal R$ to refer both to the planar region and the set of vertices of $\ld^+$ contained in $\mathcal R$.
		
		We observe that the region $\mathcal R$ necessarily does not contain the unbounded vertex $v_\infty$. If $v_\infty \in \mathcal C$ this is clear, so we assume that $v_\infty \not \in \mathcal C$. Observe that $v_\infty$ is adjacent to the unbounded face of $\Lambda_{\mathcal D}^+$ and hence lies in the unbounded component determined by any circuit in $\Lambda_{\mathcal D}^+$ that does not pass through $v_\infty$. Thus $v_\infty$ is not contained in the bounded region $\mathcal R$. The arguments to follow will show that some (necessarily bounded) vertex of $\mathcal R$ is legal rel. $\mathcal C$, and accordingly the remainder of the argument discusses only bounded vertices; we will not mention this every time.

		Since $v_1$ and $v_N$ lie in different components of $C(\mathcal C,{w_1})$, there is some $w_2 \in \mathcal R \setminus \mathcal C$. If $w_2$ is legal rel. $\mathcal C$ the argument concludes. If $w_2$ is not legal, there are two possibilities: either $w_2$ is adjacent to no vertices of $\mathcal C$, or else $C(\mathcal C, w_2)$ has at least two components. 
		
		Suppose first that $w_2$ is adjacent to no vertices of $\mathcal C$. We consider the subgraph $\mathcal W \subset \Lambda_{\mathcal D}^+$ defined as follows: a vertex $v$ is in $\mathcal W$ if and only if $v \in \mathcal R \setminus \mathcal C$ and there is some path of vertices in $\mathcal R$ connecting $v$ to $w_1$. We then take $\mathcal W$ to be the complete subgraph on this vertex set. Note that by construction, $\mathcal W$ is a connected planar graph contained in the planar region $\mathcal R$. 
		
		We claim that some $w_3 \in \mathcal W \setminus \{w_1\}$ must be adjacent to $\mathcal C$. If not, we observe that the exterior of $\mathcal W$ is bounded entirely by a single face $F$ of $\Lambda_{\mathcal D}^+$. Such $F$ must have at least five sides (see \Cref{figure:attaching}(B)). But $F$ is a bounded face of $\Lambda_{\mathcal D}^+$, and hence has at most three sides by \Cref{lemma:dualproperties}. Thus we can repeat the above argument with $w_3$ in place of $w_1$; note that the planar region $\mathcal R'$ produced can be taken to be a strict subset of $\mathcal R$, and so this process can be repeated only finitely many times. 
		
		In the latter case where $C(\mathcal C,{w_2})$ has at least two components, we can repeat the above arguments with $w_2$ in place of $w_1$. Observe that as in the previous case, the new planar region $\mathcal R'$ produced can be taken to be a strict subset of $\mathcal R$. Thus this process must terminate after finitely many steps, showing the existence of $v$ as claimed. 
	\end{proof}

	\subsection{Proof of \Cref{theorem:main}}\label{subsection:proof} Let $f: \C^2 \to \C$ be an isolated plane curve singularity with $\Sigma(f)$ of genus at least $5$. According to \Cref{theorem:Egens}, we must describe an $h$-assemblage of type $E$ on $\Sigma(f)$. Thus we will specify a core subsurface $\Sigma(\mathcal C_0)$ of genus $5$ equipped with an $E$-arboreal spanning configuration of vanishing cycles, and subsequently we will give an attaching sequence of vanishing cycles. 
	
	If $f$ is not the $A_n$ or $D_n$ singularity, then by \Cref{prop:generating_tree_mult_5} or \Cref{corollary:lowmult} as appropriate, there exists a divide $\mathcal D$ for $f$ and a connected subgraph $\mathcal C_0 \le \ld$ such that $\Sigma(\mathcal C_0)$ has genus $5$ and such that $\Sigma(\mathcal C_0)$ contains an $E$-arboreal spanning configuration of vanishing cycles. We take such $\Sigma(\mathcal C_0)$ as the core.
		
	We now describe the attaching sequence of the assemblage. Form the augmented intersection graph $\Lambda_{\mathcal D}^+$ of $\mathcal D$ as in \Cref{construction:dual}. By \Cref{lemma:fiberproperties}.\ref{item:vertices}, the bounded vertices of $\Lambda_{\mathcal D}^+$ correspond to the vanishing cycles of $\mathcal D$. Color the vertices of $\mathcal C_0 \subset \Lambda_{\mathcal D}^+$. If $\mathcal C_0 = \Lambda_{\mathcal D}$, then $\Sigma(\mathcal C_0) = \Sigma(f)$ and \Cref{theorem:main} is proved. Otherwise, apply \Cref{lemma:attaching} to produce a legal vertex $v \in \Lambda_{\mathcal D}$ rel. $\mathcal C_0$, corresponding to a vanishing cycle $a_v$. 
	
	By \Cref{lemma:legal}, since $v$ is legal rel $\mathcal C_0$, the intersection $a_v \cap \Sigma(\mathcal C_0)$ is either all of $a_v$ or else a single arc. In the former case, add $v$ to the set of colored vertices, but do not add $a_v$ to the attaching sequence (topologically, there is no effect to attaching a curve already contained in the subsurface). Otherwise, add $v$ to the colored vertices and add $a_v$ to the attaching sequence.  We now repeat this argument until the uncolored vertices are exhausted, adding the corresponding curves to the attaching sequence whenever they are not already contained in the current subsurface. 
	
	The result of this procedure gives an attaching sequence for a $5$-assemblage of type $E$ on $\Sigma(f)$; each constituent curve $c$ is a vanishing cycle, and hence the corresponding $T_c \in \Gamma_f$. By \Cref{theorem:Egens}, 
	\[
	\Mod(\Sigma(f))[\phi] \le \Gamma_f.
	\]
	As also $\Gamma_f \le \Mod(\Sigma(f))[\phi]$ by \Cref{lemma:framedmonodromy}, \Cref{theorem:main} follows.  \qed
	
	\subsection{Proof of \Cref{theorem:VC}}Let $f: \C^2 \to \C$ be an isolated plane curve singularity and assume that its Milnor fiber $\Si(f)$ has genus greater or equal than $5$ and that $f$ is not $A_n$ or $D_n$. 
	
	Let $a \subset \Sigma(f)$ be a nonseparating simple closed curve such that the associated Dehn twist $T_a$ is in  $\Gamma_f$. By \Cref{corollary:vcadmiss}, $a$ must be an admissible curve for the canonical framing $\phi$ of $\Sigma(f)$. Conversely, if $a$ is an admissible curve, then $T_a \in \Mod(\Sigma(f))[\phi]$. By \Cref{theorem:main}, there is an equality $\Gamma_f = \Mod(\Si(f))[\phi]$ and hence $T_a \in \Gamma_f$.
	 We claim that if $T_a \in \Gamma_f$, then necessarily $a$ is a vanishing cycle. To see this, choose any geometric vanishing cycle $b \subset \Si(f)$ associated to a distinguished basis associated with a morsification of $f$. This is, again, an admissible curve. By \cite[Lemma 7.5]{NickToric} there exists a collection of admissible curves $a=a_1, \ldots, a_k=b$ such that $i(a_i,a_{i+1})=1$ for all $i=1, \ldots, k-1$. Then the automorphism \[\psi:=\prod_{i=1}^{k-1} T_{a_i}T_{a_{i+1}}\] satisfies $\psi(a)=b$ and moreover $\psi \in \Mod(\Si(f))[\phi]= \Gamma_f$.  It is a classical theorem that the set of geometric vanishing cycles forms an orbit of the geometric monodromy group (c.f. \cite[Theorem 3.4]{ArnII}), so $a$ is also a vanishing cycle.	\qed

\section{Types $A$ and $D$}\label{section:AD}
	
	In this section we analyze the exceptions to \Cref{theorem:main}: the singularities $A_n$ and $D_n$. We will see that there is a strong obstruction for \Cref{theorem:main} to apply: these singularities are ``hyperelliptic'', a property which makes its presence felt in both topological and algebro-geometric ways. However, in many respects, the hyperelliptic setting is more tractable, and we are able to obtain the counterpart to \Cref{theorem:VC} (\Cref{theorem:ADVC}) in an essentially straightforward way.

	\begin{figure}[ht]
		\labellist
		\Small
		\pinlabel $\Delta_0$ at 5 5
		\pinlabel $\Delta_0$ at 5 95
		\pinlabel $\Sigma(D_{n+1})$ at 135 95
		\pinlabel $\Sigma(A_n)$ at 340 95
		\pinlabel $\Sigma(D_{n+1})$ at 145 5
		\pinlabel $\Sigma(A_n)$ at 350 5
		\pinlabel $\iota$ at 135 45
		\pinlabel $\iota$ at 345 45
		\pinlabel $\iota$ at 135 135
		\pinlabel $\iota$ at 345 135
		\pinlabel $p$ at 160 45
		\pinlabel $p$ at 160 135
		\endlabellist
		\includegraphics[scale=1]{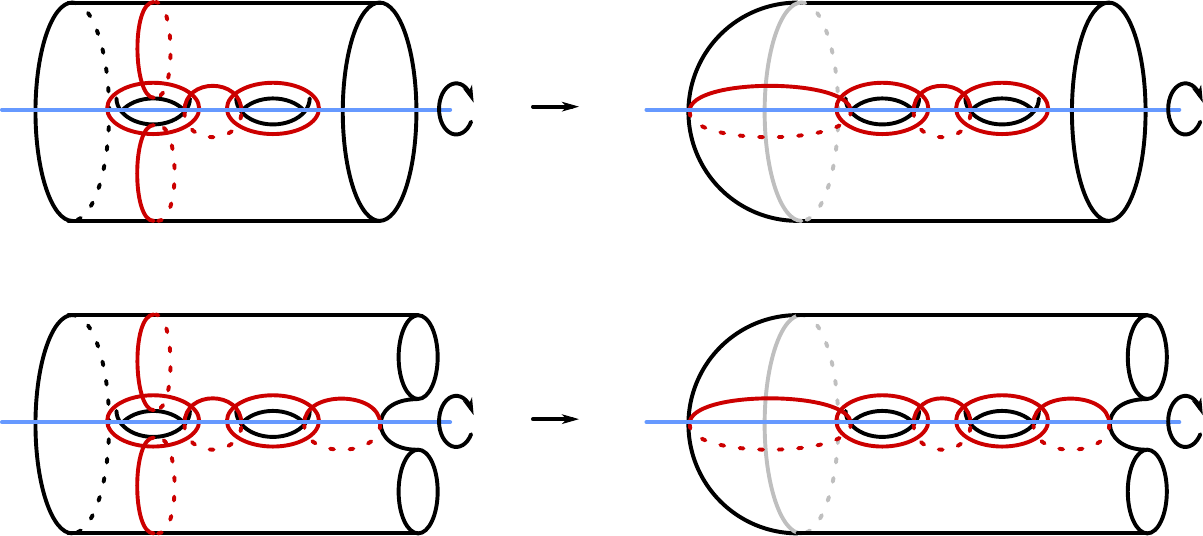}
		\caption{The Milnor fibers $\Sigma(A_n)$ (at right) and $\Sigma(D_{n+1})$ (at left), equipped with the usual distinguished collections of vanishing cycles. Top and bottom represent different parity regimes. As indicated, the Milnor fibers $\Sigma(D_{n+1})$ and $\Sigma(A_n)$ are related by the operation of capping the boundary component $\Delta_0$. We have also illustrated the hyperelliptic involution $\iota$ about which the vanishing cycles are (pre-) symmetric.}
		\label{figure:AnDn}
	\end{figure}

\Cref{figure:AnDn} shows models for the Milnor fibers $\Sigma(A_n), \Sigma(D_n)$ of the $A_n$ and $D_n$ singularities along with the standard distinguished bases of vanishing cycles. As indicated in the figure, on both $\Sigma(A_n)$ and $\Sigma(D_n)$, there is an involution $\iota$ that is ``hyperelliptic'' in the sense that the induced map on homology is the map $x \mapsto - x$. The existence of such an involution can be understood algebro-geometrically as follows: the equations $y^2 - x^{n-1}$ and $x(y^2 - x^{n-2})$ defining $A_n$ and $D_n$ each admit the obvious symmetry $y \mapsto -y$.

In the case of $A_n$, each of the distinguished vanishing cycles is invariant under $\iota$, and hence the entire monodromy group $\Gamma_{A_n}$ is centralized by $\iota$. In the case of $D_n$, the involution $\iota$ does not directly fix each vanishing cycle, but as shown in \Cref{figure:AnDn}, after capping the boundary component $\Delta_0$, the vanishing cycles for $D_n$ become symmetric. We formalize this as follows. 

\begin{definition}[(Pre-)symmetric]
A non-separating simple closed curve $c \subset \Sigma(A_n)$ is {\em symmetric} if it is invariant (up to isotopy) under the topological hyperelliptic involution $\iota: \Sigma(A_n) \to \Sigma(A_n)$ shown in \Cref{figure:AnDn}. A non-separating simple closed curve $c \subset \Sigma(D_n)$ is {\em pre-symmetric} if the image $p(c) \subset \Sigma(A_{n-1})$ is symmetric. 
\end{definition}

\begin{theorem}\label{theorem:ADVC}
A non-separating simple closed curve $c \subset \Sigma(A_n)$ is a vanishing cycle if and only if it is symmetric. Likewise, such $c \subset \Sigma(D_n)$ is a vanishing cycle if and only if it is pre-symmetric. 
\end{theorem}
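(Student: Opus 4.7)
The plan is to exploit the fact that both $\Sigma(A_n)$ and $\Sigma(D_n)$ admit a presentation as branched double covers of a disk. From the equations $y^2 - x^{n+1}$ for $A_n$ and $x(y^2 - x^{n-1})$ for $D_n$, the projection $(x,y) \mapsto (x,-y)$ realizes each Milnor fiber as a double cover of a disk $D$ branched over a finite set of marked points, with the hyperelliptic involution $\iota$ acting as the deck transformation. In this Birman--Hilden picture the distinguished vanishing cycles depicted in \Cref{figure:AnDn} are lifts of embedded arcs between adjacent marked points of $D$ (for $D_n$, after passing to $\Sigma(A_{n-1})$ via $p$), and so are manifestly (pre-)symmetric.

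For the forward implication, observe that the Dehn twist about any $\iota$-invariant simple closed curve commutes with $\iota$; hence the monodromy group $\Gamma_{A_n}$, being generated by twists about symmetric curves, centralizes $\iota$ in $\Mod(\Sigma(A_n))$. The set of vanishing cycles is a single $\Gamma_{A_n}$-orbit by \cite[Theorem 3.4]{ArnII}, so if $c = \psi(c_0)$ with $c_0$ a distinguished (symmetric) vanishing cycle, then $\iota(c) = \psi(\iota(c_0)) = \psi(c_0) = c$. The $D_n$ case runs identically downstairs via $p$, using that $p$ intertwines the $\Gamma_{D_n}$-action with its image in $\Mod(\Sigma(A_{n-1}))$ and that the $p$-images of the distinguished $D_n$ cycles are either symmetric or nullhomotopic.

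For the converse, the key combinatorial input is that nonseparating $\iota$-invariant simple closed curves on $\Sigma(A_n)$ are in bijection under Birman--Hilden with isotopy classes of embedded arcs between distinct marked points of $D$. Indeed, such an arc always lifts to a single nonseparating simple closed curve; conversely, a symmetric simple closed curve disjoint from the branch locus has either disconnected preimage or (when it encloses an odd number of branch points) a separating lift. Under this correspondence each distinguished twist $T_{c_i}$ descends to the standard half-twist generator $\sigma_i$ of the braid group $B_{n+1}$, so the image of $\Gamma_{A_n}$ in $B_{n+1}$ is all of $B_{n+1}$. Since $B_{n+1}$ acts transitively on isotopy classes of embedded arcs between distinct marked points, $\Gamma_{A_n}$ acts transitively on the set of symmetric nonseparating simple closed curves, and the presence of a single vanishing cycle in this orbit then forces every such curve to be a vanishing cycle.

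The $D_n$ case reduces analogously to $A_{n-1}$ through $p$: pre-symmetric nonseparating curves in $\Sigma(D_n)$ correspond to symmetric curves downstairs, and the $A_{n-1}$ transitivity combined with the Dehn twist about the extra vanishing cycle surrounding $\Delta_0$ (which lies in $\Gamma_{D_n}$) upgrade transitivity to $\Sigma(D_n)$. The principal obstacle I anticipate is bookkeeping in the $D_n$ case: one must verify that $p$ induces a genuine bijection between pre-symmetric nonseparating classes upstairs and symmetric classes downstairs (tracking both the connectedness of $p^{-1}$ and the effect of capping $\Delta_0$ on the nonseparating property), and that the twists generating $\Gamma_{D_n}$ are sufficient to realize every braid-orbit relation needed upstairs.
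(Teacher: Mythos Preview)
Your treatment of the $A_n$ case is correct and matches the paper's argument; you have simply made the Birman--Hilden correspondence explicit where the paper invokes ``the hyperelliptic mapping class group'' and its transitivity on symmetric nonseparating curves as a known fact.

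The $D_n$ case, however, has a genuine gap. The map $p$ is \emph{not} a bijection between pre-symmetric nonseparating classes in $\Sigma(D_n)$ and symmetric nonseparating classes in $\Sigma(A_{n-1})$: over each symmetric curve downstairs there sit infinitely many isotopy classes upstairs, differing by how they wind around $\Delta_0$. So the ``bookkeeping'' you anticipate is not bookkeeping at all---it is the heart of the matter. Your proposed fix, a single Dehn twist ``about the extra vanishing cycle surrounding $\Delta_0$'', is both imprecise (no distinguished vanishing cycle surrounds $\Delta_0$; the relevant extra curve is $a_1'$, which together with $a_1$ cobounds a pair of pants with $\Delta_0$) and insufficient: one element of the disk-pushing kernel cannot act transitively on these infinite fibers.

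The paper resolves this via the Birman exact sequence for capping $\Delta_0$. The required statement is that $\Gamma_{D_n}\cap\ker(p_*)$ surjects onto the entire point-pushing subgroup $\pi_1(\Sigma(A_{n-1}))$ inside $\Mod(\overline{\Sigma(D_n)})$, which \emph{does} act transitively on each fiber. Concretely, the element $T_{a_1}T_{a_1'}^{-1}$ lies in the kernel (since $p(a_1)$ and $p(a_1')$ are isotopic), and its conjugates $(T_{a_k}\cdots T_{a_2})*(T_{a_1}T_{a_1'}^{-1})$ for $2\le k\le n$ give a full set of point-push generators. This is the missing step you need to supply.
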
	

\begin{proof}
The assertion in the $A_n$ case is classical: the set of vanishing cycles is the orbit of a single vanishing cycle under the action of the geometric monodromy group $\Gamma_{A_n} \le \Mod(\Sigma(A_n))$ (c.f. \cite[Theorem 3.4]{ArnII}). This latter group is the well-known {\em hyperelliptic mapping class group}; in particular, all symmetric curves are in the same orbit of $\Gamma_{A_n}$, and any given vanishing cycle is easily seen to be symmetric. 

The case of $D_n$ is similarly easy to analyze. The boundary-capping map $p: \Sigma(D_n) \to \Sigma(A_{n-1})$ induces the Birman exact sequence
\[
1 \to \pi_1(UT\Sigma(A_{n-1})) \to \Mod(\Sigma(D_n)) \to \Mod(\Sigma(A_{n-1})) \to 1,
\]
where $UT\Sigma(A_{n-1})$ is the unit tangent bundle of $\Sigma(A_{n-1})$ and $\pi_1(UT\Sigma(A_{n-1}))$ acts as the ``disk-pushing subgroup'' of $\Sigma(D_n)$ about the distinguished boundary component $\Delta_0$ (c.f. \cite[Section 4.2.5]{Farb}). Restricted to the geometric monodromy groups, this yields
\[
1 \to \pi_1(UT\Sigma(A_{n-1}))\cap \Gamma_{D_n} \to \Gamma_{D_n} \to \Gamma_{A_{n-1}} \to 1.
\]
The assertion of \Cref{theorem:ADVC} is therefore equivalent to seeing that $\pi_1(UT\Sigma(A_{n-1})) \cap \Gamma_{D_n}$ acts {\em transitively} on the simple closed curves in $\Sigma(D_n)$ sitting above a fixed isotopy class in $\Sigma(A_{n-1})$. 

Let $\overline{\Sigma(D_n)}$ denote the surface obtained from $\Sigma(D_n)$ by replacing the boundary component $\Delta_0$ with a puncture. Observe that there is a canonical bijection between the isotopy classes of curves on $\Sigma(D_n)$ and on $\overline{\Sigma(D_n)}$. It therefore suffices to consider the image $\overline{\Gamma(D_n)} \le \Mod(\overline{\Sigma(D_n)})$ and its action on curves on $\overline{\Sigma(D_n)}$. 

We claim that the image of $\pi_1(UT\Sigma(A_{n-1})) \cap \Gamma_{D_n}$ in $\Mod(\overline{\Sigma(D_n)})$ is the entire point-pushing subgroup $\pi_1(\Sigma(A_{n-1}))$; the theorem follows from this claim. To see this, observe that the set of of elements
\[
(T_{a_k} T_{a_{k-1}} \dots T_{a_2})*(T_{a_1} T_{a_1'}^{-1}) \in \overline{\Gamma_{D_n}}
\]
for $2 \le k \le n$ (where $a*b$ denotes the conjugation $aba^{-1}$) determines a set of point-pushes that generate $\pi_1(\Sigma(A_{n-1}))$.
\end{proof}

\section{Low multiplicity}\label{sec:sporadic}

In this final section we analyze the low-multiplicity cases not covered in \Cref{prop:generating_tree_mult_5}. Recall that our objective is \Cref{corollary:lowmult}:\\

\noindent \textbf{\Cref{corollary:lowmult}.} {\em 		Let $f:\C^2 \to \C$ be an isolated plane curve singularity of multiplicity $m \le  4$ such that $\Sigma(f)$ has genus at least $5$. If $f$ is not the $A_n$ or $D_n$ singularity, then $f$ admits a divide $\mathcal D$ with a complete subgraph $\mathcal C_0 \le \ld$ such that $\Sigma(\mathcal C_0)$ has genus $5$ and such that there is an $E$-arboreal spanning configuration on $\Sigma(\mathcal C_0)$ consisting of vanishing cycles. }\\

We analyze multiplicities $2,3,4$ separately. The case of multiplicity $2$ is trivial: every singularity of multiplicity $2$ is topologically equivalent to an $A_n$ singularity. For multiplicity $3,4$, more analysis is required, and in \Cref{subsection:tools}, we present some tools for working with divides. In \Cref{section:strategy}, we describe the general structure of the argument. We then employ this strategy to analyze multiplicity $3$ in \Cref{section:m3}, and treat multiplicity $4$ in \Cref{section:m4}. 

\subsection{Further tools}\label{subsection:tools}

In this section, we present three tools needed in the sequel to produce suitable divides and to manipulate collections of vanishing cycles.

\para{Admissible isotopies} As explained in \cite[Chapter 4]{ArnII} one can modify a divide $\calD$ associated with an isolated plane curve singularity $f$ without changing all the invariants associated with it (including the topology of the associated Milnor fiber and the geometric monodromy group). Such modifications are realized by homotopies of the divide such that all modifications of the embedded isotopy type of the divide are as in \Cref{figure:admissible_homotopy}. That is, a segment may cross a double point of the divide.

\begin{figure}[ht]
	\includegraphics[scale=1]{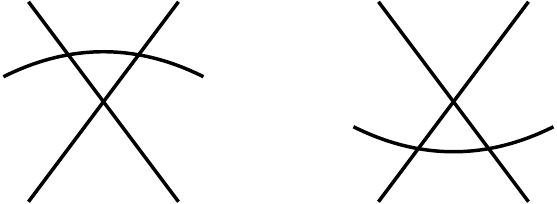}
	\caption{A change of the isotopy type of a divide by an admissible homotopy. Left and right figures are interchangeable in a given divide.}
	\label{figure:admissible_homotopy}
\end{figure}

\para{Chebyshev polynomials} There is a particularly nice way of producing divides for singularities of the type $f(x,y)=x^p-y^q$ using Chebyshev polynomials (we assume for this discussion that $\gcd(p,q) = 1$).  Without getting into details, (see \cite{Gus}), we assert that the function \begin{equation}\label{eq:chebyshev}\tilde{f}(x,y) =  \cos(p\cdot \cos^{-1}(x))-\cos(q\cdot \cos^{-1}(y))\end{equation} is a real morsification of $f(x,y)$ and that $\tilde{f}^{-1}(0)\cap [-1,1] \times [-1,1]$ is a divide for the singularity defined by $f$.

\begin{remark}\label{rem:reducible}
	In the above construction, if $\gcd(p,q)=1$ then the divide given by $\tilde f$ is irreducible and consists of an immersion of an interval $[-1,1]$ in $[-1,1] \times [-1,1]$. If $\gcd(p,q) \neq 1$ then we may encounter a divide with some immersed circles (recall \Cref{rem:extended_version}), but the resulting intersection diagram can still be taken as in \Cref{figure:int_diagram_56}.
	
\end{remark}

	It is a straightforward induction using the morsification given by the Chebyshev polynomials as \cref{eq:chebyshev} to check that the resulting divides have the form depicted in \Cref{figure:int_diagram_56}. In that picture we see a grid of $p-1$ by $q-1$ vertices corresponding to the vanishing cycles of the singularity $x^p+ y^q$. See also the cited article \cite{Gus} for a more general construction including other types of singularities.
	
	\begin{figure}[ht]
	\includegraphics[scale=1]{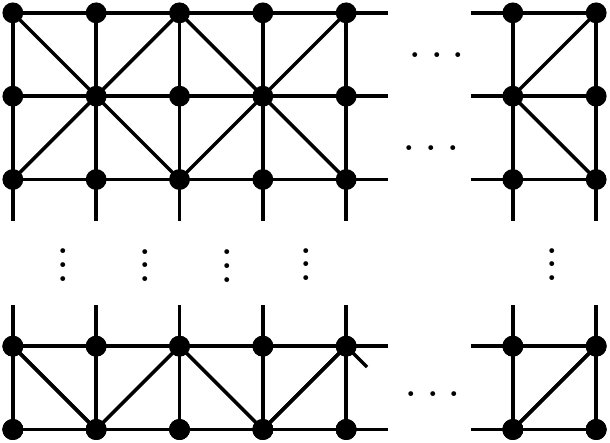}
	\caption{}
	\label{figure:int_diagram_56}
\end{figure}

\para{Change of basis: toggling triangles}
If $a$ and $b$ are vanishing cycles, the Dehn twists $T_a^{\pm 1}(b)$ are as well. If moreover $a$ and $b$ are elements of a distinguished basis (corresponding to vertices in an intersection graph $\Lambda_\mathcal D$), then by equipping $\Lambda_\mathcal D$ with the structure of a directed graph recording signed intersections, it is possible to read off the intersection pattern of $T_a^{\pm 1}(b)$ with the remaining elements of the distinguished basis. This has a simple encoding involving triangles in $\Lambda_\mathcal D$; we call this graphical procedure ``triangle toggling''. 

\begin{definition}[Oriented intersection graph, (in)coherent triangle]
	Let $\mathcal C = \{c_1, \dots, c_k\}$ be a distinguished set of vanishing cycles (i.e. a collection of pairwise non-isotopic curves such that $i(c_i, c_j) \le 1$ for all pairs); denote the associated intersection graph by $\Lambda_\mathcal D$. Endow each $c_i \in \mathcal C$ with an arbitrary orientation. The {\em oriented intersection graph} $\vec{\Lambda}_\mathcal C$ is the directed graph on the underlying undirected graph $\Lambda_\mathcal C$, where the edge between $c_i$ and $c_j$ is oriented towards $c_j$ if and only if the algebraic intersection $\pair{c_i, c_j} = 1$. 
	
	Suppose that $c_i, c_j, c_k$ form a triangle $T$ in $\Lambda_\mathcal C$. We say that $T$ is {\em coherent} if the directed subgraph forms a directed $3$-cycle, and is {\em incoherent} otherwise. 
\end{definition}

Suppose that there is a directed edge in $\vec{\Lambda}_\mathcal C$ from $c_i$ to $c_j$, i.e. that $\pair{c_i, c_j} = 1$. The lemma below says that when all triangles involving $c_i$ and $c_j$ are coherent, the effect of replacing $c_j$ with $T_{c_i}(c_j)$ is to ``toggle triangles'': if $c_k$ is adjacent to $c_i$, then remove the edge between $c_j$ and $c_k$ if it exists, and add in such an edge if it does not. 

\begin{lemma}[Triangle toggling]\label{lemma:toggle}
	Let $\vec{\Lambda}_{\mathcal C}$ be the oriented intersection graph associated to a configuration of curves $\mathcal C = \{c_1, \dots, c_k\}$.
	\begin{enumerate}
		\item\label{item:noedge} Suppose $i(c_i, c_k) = 1$ and $i(c_j, c_k) = 0$. Then $i(T_{c_i}(c_j), c_k) = 1$ and $\pair{T_{c_i}(c_j), c_k} =1$.
		\item\label{item:edge} Suppose $c_i, c_j, c_k$ determine a coherent triangle in $\vec{\Lambda}_{\mathcal C}$ with an edge directed from $c_i$ to $c_j$. Then $i(T_{c_i}(c_j), c_k) = 0$.
	\end{enumerate}
\end{lemma}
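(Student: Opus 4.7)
The plan is to verify both parts using the standard local model of a Dehn twist. Fix a small annular neighborhood $A$ of $c_i$ such that $c_j$ crosses $A$ in a single transverse arc, and (in both cases) $c_k$ also crosses $A$ in a single transverse arc, disjoint from the $c_j$-arc. Outside $A$ the twist $T_{c_i}$ is the identity, and inside $A$ a representative of $T_{c_i}(c_j)$ is obtained from the $c_j$-arc by appending a parallel copy of $c_i$ traversed once; equivalently, $T_{c_i}(c_j)$ is isotopic to the oriented smoothing $c_i \#_q c_j$ at the point $q = c_i \cap c_j$.

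For part \eqref{item:noedge}, the hypothesis $i(c_j, c_k) = 0$ combined with minimal position of the collection $\mathcal C$ means $T_{c_i}(c_j) = c_j$ is disjoint from $c_k$ outside $A$. Inside $A$, the appended copy of $c_i$ crosses the $c_k$-arc exactly once, so $T_{c_i}(c_j)$ and $c_k$ meet in a single transverse point. This yields $i(T_{c_i}(c_j), c_k) = 1$, and the algebraic sign is inherited from $\langle c_i, c_k\rangle$ via the orientation of the appended $c_i$ copy, which gives $\langle T_{c_i}(c_j), c_k\rangle = \pm 1$; a consistent orientation convention on the oriented intersection graph produces the sign $+1$.

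For part \eqref{item:edge}, the same local model presents $T_{c_i}(c_j) \cap c_k$ as two transverse points: the original intersection $r = c_j \cap c_k$ outside $A$, and a new point inside $A$ arising from the appended $c_i$. I would first compute the algebraic intersection via the Picard--Lefschetz formula
\[
[T_{c_i}(c_j)] = [c_j] + \langle c_i, c_j\rangle\, [c_i]
\]
in $H_1$; applied to the cyclic sign pattern of a coherent triangle (the edges $c_i \to c_j \to c_k \to c_i$ yield signs that cancel in the final pairing), this gives $\langle T_{c_i}(c_j), c_k\rangle = 0$, so the two geometric intersections have opposite signs. To upgrade to the geometric statement $i(T_{c_i}(c_j), c_k) = 0$, I would invoke the bigon criterion by producing an explicit bigon cobounded by arcs of $T_{c_i}(c_j)$ and $c_k$.

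The main obstacle is exhibiting this bigon, since algebraic cancellation alone does not imply geometric intersection zero for simple closed curves. I would handle this by drawing the configuration in a regular neighborhood $N$ of $c_i \cup c_j \cup c_k$: for a coherent triangle this $N$ has Euler characteristic $-3$ and a canonical combinatorial structure (arising from the pairwise single intersection points) in which the two candidate arcs of $T_{c_i}(c_j) \setminus c_k$ and $c_k \setminus T_{c_i}(c_j)$ can be seen to cobound an embedded disk, permitting the removal of both intersection points by isotopy.
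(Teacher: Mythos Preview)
Your approach is correct and is exactly what the paper has in mind: the paper's own proof simply cites standard Dehn-twist identities from Farb--Margalit for part~\eqref{item:noedge} and declares part~\eqref{item:edge} ``easy to prove by direct pictorial computation,'' which is precisely the local-model-plus-bigon argument you outline. Your write-up is in fact more explicit than the paper's, particularly in isolating the Picard--Lefschetz cancellation and then invoking the bigon criterion in the regular neighborhood of the triangle; nothing further is needed.
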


\begin{proof}
	Claim \eqref{item:noedge} simply records a basic fact concerning the algebra of Dehn twisting (c.f. \cite[Proposition 3.2, Proposition 6.3]{Farb}). Claim \eqref{item:edge} is similarly easy to prove by direct pictorial computation.
\end{proof}

The effect of toggling triangles is to ``change basis''. In particular, neither the topology of the subsurface spanned by the vertices in question, nor the geometric monodromy group, is affected by a toggle.

\begin{lemma}\label{lemma:nochange}
Let $\mathcal D$ be a divide, and let $\mathcal C \subset \ld$ be a subgraph. Let $\mathcal C'$ be the graph obtained from $\mathcal C$ by any sequence of triangle toggles applied to pairs of vertices both in $\mathcal C$. Then the subsurfaces $\Sigma(\mathcal C)$ and $\Sigma(\mathcal C')$ are isotopic, and there is an equality of subgroups of $\Mod(\Sigma(\mathcal D))$
\[
\pair{T_c \mid c \in \mathcal C} = \pair{T_c \mid c \in \mathcal C'}.
\]
\end{lemma}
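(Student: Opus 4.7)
The plan is to reduce to a single toggle and then establish the two claims separately. Suppose $\mathcal C'$ is obtained from $\mathcal C$ by a single toggle: choose curves $c_i, c_j \in \mathcal C$ with $i(c_i, c_j) = 1$, and replace $c_j$ with $c_j' := T_{c_i}^{\pm 1}(c_j)$, leaving all other curves of $\mathcal C$ unchanged. By induction on the number of toggles, it suffices to prove both assertions for such a single-toggle replacement.

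For the equality of subgroups, I would invoke the standard conjugation formula $T_{\phi(c)} = \phi \, T_c \, \phi^{-1}$, valid for any orientation-preserving homeomorphism $\phi$ and any simple closed curve $c$. Applied with $\phi = T_{c_i}^{\pm 1}$, this gives
\[
T_{c_j'} = T_{c_i}^{\pm 1} \, T_{c_j} \, T_{c_i}^{\mp 1} \in \pair{T_c \mid c \in \mathcal C},
\]
so $\pair{T_c \mid c \in \mathcal C'} \le \pair{T_c \mid c \in \mathcal C}$. The reverse inclusion follows symmetrically from $T_{c_j} = T_{c_i}^{\mp 1} T_{c_j'} T_{c_i}^{\pm 1}$.

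For the isotopy of subsurfaces, the key observation is that the Dehn twist $T_{c_i}$ admits a representative supported in an arbitrarily small annular neighborhood $A$ of $c_i$. Hence $c_j'$ agrees with $c_j$ outside $A$, which gives the containment $c_j' \subset c_j \cup A$. Since $A$ may be chosen inside any given neighborhood of $c_i$, and both $c_i, c_j$ lie in $\Sigma(\mathcal C)$, we conclude that $c_j' \subset \Sigma(\mathcal C)$ after a small isotopy. All other curves of $\mathcal C'$ coincide with those of $\mathcal C$, so $\bigcup_{c \in \mathcal C'} c \subset \Sigma(\mathcal C)$, and therefore $\Sigma(\mathcal C') \subset \Sigma(\mathcal C)$ up to isotopy (since $\Sigma(\mathcal C')$ may be taken to be a regular neighborhood inside $\Sigma(\mathcal C)$). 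Running the same argument with the inverse toggle $c_j = T_{c_i}^{\mp 1}(c_j')$ gives $\Sigma(\mathcal C) \subset \Sigma(\mathcal C')$ up to isotopy.

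The only mild subtlety, which I expect to be the main technical point, is converting these two-way inclusions (holding up to isotopy) into an honest isotopy between $\Sigma(\mathcal C)$ and $\Sigma(\mathcal C')$. This follows from the general fact that a connected compact subsurface $S \subset \Sigma$ that deformation retracts onto a graph $G$ is determined up to isotopy in $\Sigma$ by the isotopy class of $G$: concretely, pick specific regular neighborhoods $N_1 \supset \bigcup \mathcal C$ and $N_2 \supset \bigcup \mathcal C'$, and from the inclusions above produce a slightly enlarged regular neighborhood $N_1' \supset N_2$, still isotopic to $N_1$. The sandwich $N_1 \subset N_2 \subset N_1'$ forces $N_2$ to be isotopic to $N_1$, completing the proof.
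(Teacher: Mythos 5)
The paper states \Cref{lemma:nochange} without any proof at all---it is offered as an evident consequence of the discussion of triangle toggling---so there is no argument of the authors' to compare yours against. Your proposal is a correct and essentially complete justification. The reduction to a single toggle, the conjugation identity $T_{\phi(c)} = \phi T_c \phi^{-1}$ for the equality of twist subgroups, and the observation that $T_{c_i}^{\pm 1}(c_j) \subset c_j \cup A$ for a small annulus $A$ about $c_i$ are all exactly the right ingredients; the concluding regular-neighborhood sandwich is a standard argument and is fine to leave at the level of detail you give.

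One point is worth making explicit, because it shows your choice to keep the twist supported in a small annulus is not merely a convenience but essential to the truth of the statement. If one instead isotoped the toggled curve $c_j'$ into \emph{minimal position} with the other curves of $\mathcal C'$ before taking the regular neighborhood, the lemma would be false as stated: toggling a coherent triangle deletes an edge of the intersection graph, and since a regular neighborhood of a minimal-position configuration has Euler characteristic equal to minus the number of pairwise intersection points, the two neighborhoods would have different Euler characteristics and could not be isotopic. Your reading---$\Sigma(\mathcal C')$ is a regular neighborhood of the literal curves $T_{c_i}^{\pm 1}(c_j)$, with the twist supported inside $\Sigma(\mathcal C)$---is the one under which the lemma holds and is surely what the authors intend, but it deserves the sentence of clarification you have effectively supplied.
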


Observe that it is always possible to orient any three $c_i, c_j, c_k$ such that the associated triangle is coherent, but it may not be possible to orient all $c_i \in \mathcal C$ such that all triangles in $\vec{\Lambda}_\mathcal C$ are simultaneously coherent. However, for intersection graphs arising from divides, this is the case.

\begin{lemma}\label{lemma:oriented}
	Let $\mathcal D$ be a divide with associated vanishing cycles $\mathcal C = \{c_1, \dots, c_k\}$ and intersection graph $\Lambda_\mathcal D$. Then there exists an orientation of the elements of $\mathcal C$ such that all triangles in the associated $\vec{\Lambda}_\mathcal D$ are coherent. 
\end{lemma}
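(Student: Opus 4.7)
The plan is to exploit the natural tripartite structure of $\Lambda_\mathcal D$ arising from the classification of vanishing cycles in \Cref{construction:milnorfiber}. I would partition the vertices of $\Lambda_\mathcal D$ into three sets: the set $S$ of \emph{saddle cycles} (associated to double points of $\mathcal D$), the set $P$ of \emph{maximum-region cycles} (associated to enclosed regions containing a maximum of the adapted Morse function $f_s$ of \Cref{definition:adapted_morse}), and the set $N$ of \emph{minimum-region cycles}. Then I would verify that $\Lambda_\mathcal D$ is tripartite with respect to this decomposition. Two saddle cycles lie in disjoint cylinders $C_p \subset \Sigma(\mathcal D)$ and so are disjoint, hence there are no edges within $S$. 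Two region cycles are adjacent only when their regions share an edge of $\mathcal D$; since $\mathcal D = f_s^{-1}(0)$ separates the plane into components where $f_s$ has a fixed sign, adjacent regions have opposite types, so there are no edges within $P$ or within $N$. In particular, every triangle of $\Lambda_\mathcal D$ has exactly one vertex in each of $S$, $P$, and $N$.

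Each triangle $(s, r_+, r_-)$ then carries a rigid local structure: the regions $r_+$ and $r_-$ must be adjacent quadrants at the double point $p$ carrying $s$, sharing exactly one edge of $\mathcal D$ emanating from $p$. Indeed, diagonally opposite quadrants at $p$ have the same sign of $f_s$ and hence cannot belong to regions of opposite type, while the adjacency of $r_+$ and $r_-$ in $\Lambda_\mathcal D$ requires them to share a single edge. Consequently all three curves $a_s, a_{r_+}, a_{r_-}$ lie in a standard local neighborhood of $p$ in $\Sigma(\mathcal D)$ consisting of the cylinder $C_p$ together with the twisted strip along the shared edge.

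Next, I would fix a geometric orientation convention for each vanishing cycle: orient each saddle cycle $a_s$ using the complex structure on its cylinder $C_s$; orient each maximum-region cycle as the counterclockwise boundary of its region (with respect to the ambient orientation of the plane containing $\mathcal D$); and orient each minimum-region cycle as the clockwise boundary. The assertion is that with these canonical choices, the three signed intersections $\pair{a_s, a_{r_+}}$, $\pair{a_{r_+}, a_{r_-}}$, $\pair{a_{r_-}, a_s}$ around every triangle share a common sign, making the directed triangle $s \to r_+ \to r_- \to s$ a $3$-cycle. Because the local model near $p$ is canonical and structurally identical at every triangle, a single local verification propagates to all triangles of $\Lambda_\mathcal D$ at once. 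The main obstacle is precisely this local sign computation: one must carefully track how the chosen orientations interact as the curves $a_{r_+}$ and $a_{r_-}$ traverse the cylinder $C_p$, cross the core circle $a_s$, and then meet each other in the twisted strip along the shared edge emanating from $p$.
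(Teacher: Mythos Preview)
Your approach is sound but takes a genuinely different route from the paper. The paper's proof is a one-sentence appeal to the literature: the existence of such an orientation is a reformulation of A'Campo's classical computation of the intersection form on the Milnor fiber in terms of a divide, and the paper simply cites \cite[Theorem 4.1]{ArnII} rather than reproving it.

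Your plan instead reconstructs this classical fact from scratch. The tripartite decomposition into saddle, maximum-region, and minimum-region cycles is exactly the structure underlying A'Campo's formula, and your reduction of every triangle to a local configuration at a single saddle is correct (since diagonally opposite quadrants carry the same sign of $f_s$, the two region vertices of a triangle must occupy adjacent quadrants and hence share a half-edge at $p$). The orientation convention you propose is essentially the standard one. What you flag as the ``main obstacle'' --- the local sign check --- is precisely the content of the theorem the paper cites; it is routine once set up, but you have not carried it out. One small inaccuracy worth correcting: the region cycles $a_{r_+}, a_{r_-}$ are global objects and are certainly not contained in a neighborhood of $p$; what matters is only that their unique intersection point, together with their intersections with $a_s$, all lie in the local model $C_p$ plus the adjacent strip. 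That is enough for the sign computation to be purely local.

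In short, the paper buys brevity by citing a known result; your approach would yield a self-contained proof at the cost of redoing that computation.
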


\begin{proof}
	This is a reformulation of a basic fact concerning the structure of the intersection form associated to a Milnor fiber. Compare \cite[Theorem 4.1]{ArnII}.
\end{proof}

\begin{remark}[Unoriented intersection graphs suffice]\label{remark:unoriented}
The general theory of triangle toggling certainly requires the extra information encoded in the {\em oriented} intersection graph. However, our use of toggling in this paper will be simple enough that we will not need to pay attention to orientations, and consequently the toggling computations in \Cref{section:m3,section:m4} take place on the unoriented intersection graphs. We will only ever use the toggling procedure to remove triangles that appear in the original intersection graphs arising from divides, and these are necessarily coherently oriented by \Cref{lemma:oriented}. In \Cref{section:m4}, we indicate the toggling procedure with a colored directed arrow; if the arrow points from $a$ to $b$, this indicates that $b$ is replaced by $T_a(b)$. 
\end{remark}

\subsection{Outline of the argument}\label{section:strategy} We explain here the strategy employed in analyzing the remaining cases. 

\para{Step 1: Produce (partial) divides} The first step in the arguments below is to produce divides for the singularities under consideration. As in \Cref{prop:generating_tree_mult_5}, it will often suffice to find some universal portion of a divide applicable to a class of singularities. For multiplicity $4$, we are able to exploit the same strategy as in \Cref{prop:generating_tree_mult_5}, by deforming a divide with ordinary singularities and proceeding by a reasonable quantity of casework. For multiplicity $3$, we first partition the analysis by the number of branches of the singularity, and employ some more ad-hoc methods. 

\para{Step 2 : Identify $\boldmath \mathcal C_0$} Step $1$ produces a divide $\mathcal D$ for the singularities under study. The second step is to find a connected subgraph $\mathcal C_0 \subset \ld$ with $10$ vertices, corresponding to a subsurface $\Sigma(\mathcal C_0)$ of genus $5$. In ideal circumstances, $\mathcal C_0$ will already be $E$-arboreal. However, much of the time, we will require the additional Step 3 below to manipulate the vanishing cycles until the intersection graph becomes arboreal.

\para{Step 3: Toggle triangles} The final step of the argument is to apply the triangle toggling procedure to $\mathcal C_0$. The goal is to take $\mathcal C_0$ to a new subgraph $\mathcal C_0'$ that has the structure of an $E$-arboreal spanning configuration. The curves corresponding to the new vertices are obtained from the distinguished vanishing cycles by a sequence of Dehn twists, and so are themselves vanishing cycles.

\subsection{Multiplicity $3$}\label{section:m3} We divide this case in three subcases depending on the number of branches of the singularity. 

\para{One branch}
We assume first that the singularity is irreducible. In this case it is topologically equivalent to an irreducible singularity with the Puiseux pair $(3,q)$ with $q > 3$ and $\gcd(3,q)=1$, i.e. up to change of coordinates, it has the form $x^3 + y^q$.

We construct the divide $\mathcal D$ associated to the Chebyshev polynomial as in \Cref{figure:int_diagram_56}. If $q \geq 10$ then the $(1,2,6)$ tripod graph is a complete subgraph $\mathcal C_0 \subset \ld$: take $9$ vertices from one of the rows of the intersection diagram  and one extra suitable vertex in an adjacent row (see again \Cref{figure:int_diagram_56}). This provides the required $E$-arboreal spanning configuration.

It remains to analyze the singularities of the form $x^3+y^q$ with $q<10$ for which the Milnor fiber has genus at least $5$. There are two such singularities: those with Puiseux pairs $(3,7)$ and $ (3,8)$.

In each case, we construct the divide $\mathcal D$ associated to the Chebyshev polynomial as before, but now we modify the divide via an admissible homotopy (\Cref{subsection:tools}), leading to a new divide $\mathcal D'$. We then identify a subgraph $\mathcal C_0 \subset \Lambda_{\mathcal D'}$ with $10$ vertices.

We simply draw a divide for each of these singularities and, after admissible homotopies and toggling triangles, we find a complete subgraph of the corresponding intersection diagram which is $E$-arboreal of genus at least $5$. See \Cref{figure:divide37,figure:dual37} and \Cref{figure:divide38,figure:dual38} for each of the two cases. For the Puiseux pair $(3,8)$ it is possible to find $\mathcal C_0$ with the structure of the $(1,2,6)$ tripod graph (and hence to conclude as above), but for Puiseux pair $(3,7)$, more work is required. We use the triangle-toggling procedure of \Cref{subsection:tools} to modify $\mathcal C_0$ to a new graph $\mathcal C_0'$ with the structure of the $(1,2,6)$ tripod graph and conclude.
\begin{figure}[ht!]
		\includegraphics[scale=1]{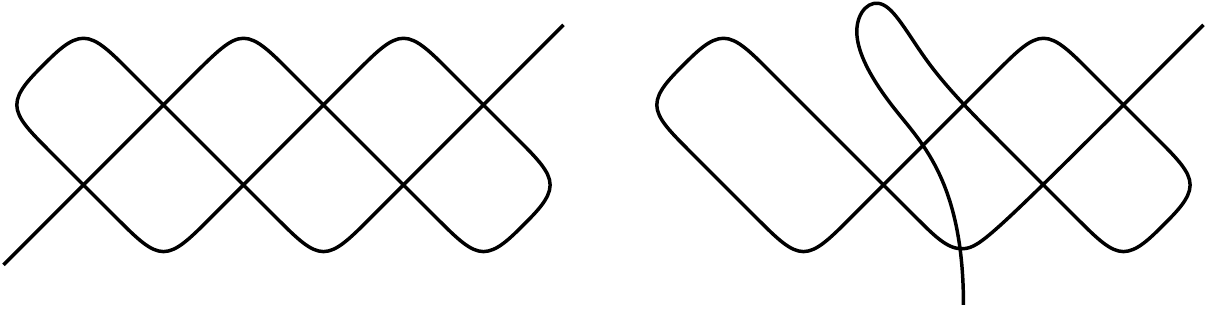}
	\caption{On the left, the standard divide for the singularity $x^3+y^7$. On the right the transformed divide after an admissible isotopy.}
	\label{figure:divide37}
\end{figure}

\begin{figure}[ht!]
	\includegraphics[scale=1]{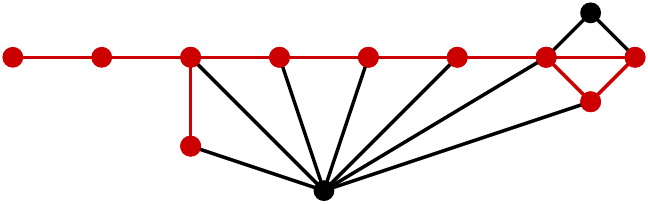}
	\caption{Here we see the associated dual graph to the deformed divide. The red subgraph becomes $E$-arboreal after toggling its only triangle.}
	\label{figure:dual37}
\end{figure}

\begin{figure}[ht!]
	\includegraphics[scale=1]{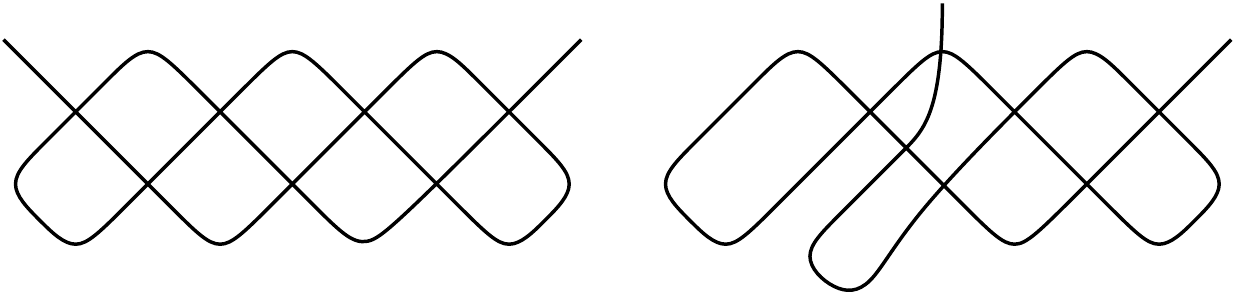}
	\caption{On the left, the standard divide for the singularity $x^3+y^8$. On the right the transformed divide after an admissible isotopy.}
	\label{figure:divide38}
\end{figure}

\begin{figure}[ht!]
	\includegraphics[scale=1]{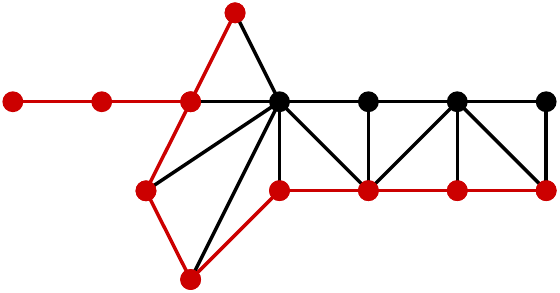}
	\caption{Here we see the associated dual graph to the deformed divide. In red, an $E$-arboreal spanning configuration is depicted.}
	\label{figure:dual38}
\end{figure}

\para{Two branches}  The following lemma completely classifies isolated plane curve singularities of multiplicity $3$ consisting of exactly $2$ branches.

\begin{lemma}\label{lem:class_mult3_2branch}
Let $f:\C^2 \to \C$ be an isolated plane curve singularity of multiplicity $3$ and assume that $f$ consists of exactly two branches. Then $f$ is topologically equivalent to a singularity consisting of a branch topologically equivalent to an $A_k$ singularity for $k$ even, and a branch which is smooth. The intersection multiplicity $\nu$ of the two branches is either $k+1$ or else $2d$ for some $d \in \N$ with $2 \leq 2d < k+1$. 

Up to a topological change of coordinates, any such $f$ has an expression of the form
\begin{equation}\label{equation:models}
f=(x+y^d)(x^2-y^{k+1})
\end{equation}
with $1 \le 2d \le k+2$. For $2d < k+2$, the intersection multiplicity is $\nu = 2d$, while for $2d = k+2$, it is $\nu = k+1$. 
\end{lemma}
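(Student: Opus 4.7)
The plan is to proceed in three stages. First, since the multiplicity of a product of branches equals the sum of multiplicities, two branches summing to multiplicity $3$ must have multiplicities $2$ and $1$, so the multiplicity-$1$ branch is smooth. The multiplicity-$2$ branch $f_1$ is irreducible, so its sole essential Puiseux pair has the form $(2, q)$ with $q \ge 3$ odd by the coprimality requirement in \eqref{eq:puiseux_ineq}. Setting $k := q - 1$ (necessarily even and at least $2$), the branch $f_1$ is topologically an $A_k$ singularity, modeled by $x^2 - y^{k+1}$; after a topological change of coordinates I may take $f_1 = x^2 - y^{k+1}$ exactly.

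Next, I classify the admissible intersection multiplicities. The branch $f_1$ admits the Puiseux parametrization $\gamma(t) = (t^{k+1}, t^2)$, so $\nu(f_1, f_2) = \operatorname{ord}_t f_2(\gamma(t))$. Using the implicit function theorem, after multiplying by a unit I can write $f_2$ either as $y + g(x)$ (if $f_2$ is transverse to the tangent line $\{x = 0\}$ of $f_1$) or as $x + h(y)$ (if tangent), with $g(0) = h(0) = 0$. In the transverse case, $f_2(\gamma(t)) = t^2 + g(t^{k+1})$ has order $2$ since $k+1 > 2$, giving $\nu = 2$. In the tangent case, writing $h(y) = c y^d + \cdots$ with $c \neq 0$, one gets $f_2(\gamma(t)) = t^{k+1} + c t^{2d} + \cdots$; since $k+1$ is odd while $2d$ is even, the two exponents cannot coincide, and the order equals $\min(2d, k+1)$. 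This yields $\nu = 2d$ for $d \le k/2$ and $\nu = k+1$ for $d \ge k/2 + 1$, so the achievable values of $\nu$ are exactly $\{2, 4, \ldots, k\} \cup \{k+1\}$.

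Finally, I exhibit explicit models. For the family $f = (x + y^d)(x^2 - y^{k+1})$, substituting $x = -y^d$ gives $\nu = \dim_\C \C\{y\}/(y^{2d} - y^{k+1}) = \min(2d, k+1)$. Thus $d \in \{1, \ldots, k/2\}$ realizes every even $\nu = 2d < k+1$, and $d = k/2 + 1$ (equivalently $2d = k+2$) realizes $\nu = k+1$. The lemma then follows from the classical fact that for a reducible plane curve singularity with two branches, the topological type is determined by the topological types of the individual branches together with their pairwise intersection multiplicity (see e.g.~\cite{Bri}). The step requiring the most care is the casework in the second stage; the point that makes it clean is that $k+1$ is odd and $2d$ is even, so the borderline case $2d = k+1$ is automatically ruled out, cleanly separating the regimes $\nu = 2d$ and $\nu = k+1$.
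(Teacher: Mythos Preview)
Your proof is correct and follows essentially the same approach as the paper's: both arguments parametrize the $A_k$ branch by $t \mapsto (t^{k+1}, t^2)$, split into cases according to whether the smooth branch is transverse or tangent to $\{x=0\}$, and read off $\nu$ as the $t$-order of the substitution. Your version is slightly cleaner in two respects---you normalize the smooth branch via the implicit function theorem rather than working with a general linear part $ax+by$, and you make the parity observation ($k+1$ odd versus $2d$ even) explicit---but these are presentational refinements of the same argument.
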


\begin{proof}
The only singularities of multiplicity $2$ and one branch are the $A_{k}$ singularities $x^2+y^{k+1}$ for $k \in 2 \N$. So we find that the only singularities of multiplicity $3$ and two branches must have a branch topologically equivalent to an $A_{k}$ singularity and a smooth branch. In order to determine its topological type we must only specify $k$ and the intersection multiplicity between the two branches.

First observe that in order for $x^2+y^{k+1}$ to be irreducible, $k+1$ must be odd, hence the strict inequality in the hypothesis. 

Up to change of coordinates, we can assume that the singularity $f$ is of the form $f_1(x,y) \cdot (x^2- y^{k+1})$ where $f_1(x,y)$ is a polynomial of order $1$. This means that 
\[
f_1(x,y)= ax+by + \textit{higher order terms},
\]
and $a,b \in \C$ with at least one of $a$ or $b$ not zero. In order to compute the intersection multiplicity, we parametrize the branch $x^2-y^{k+1}$ via $t \mapsto (t^{k+1},t^2)$ and compute the order of $f_1(t^{k+1}, t^2)$. If $b \neq 0$ then $\nu = 2$ because no other term can cancel the term $bt^2$. If $b=0$, then necessarily $a \neq 0$. The term $a t^{k+1}$ cannot be cancelled by any higher-order terms of $f_1(x,y)$, and so $\nu \le k+1$. The inequality is strict if there is a monomial of the form $y^d = t^{2d}$ with $2d < k+1$, in which case $\nu = 2d$ for the minimal such $d$. 

It is now clear that the explicit expressions in the statement of the lemma have the required properties.
\end{proof}

We can now proceed with the Steps 1 - 3 of \Cref{section:strategy}, as applied to the models given in \cref{equation:models}. The case $d=1$ yields the $D_k$ singularity. Assume therefore that $d \geq 2$. In this case the Milnor fiber has $2$ boundary components and, following \cref{eq:milnor_total}, genus $g= k/2 + \nu -1$. Recall that $\nu \le k+1$ and that $k$ is even, so that for $k =2$, the condition $g \ge 5$ is impossible. For $k \ge 4$, all values of $\nu$ for $d \ge 2$ yield Milnor fibers of genus $g \ge 5$.

To produce divides, we observe that each of the branches can be morsified by \cref{eq:chebyshev}. After doing so, it can be directly checked that the divides are as in \cref{figure:mult3cases}. 

\begin{figure}[ht!]
	\includegraphics[scale=1]{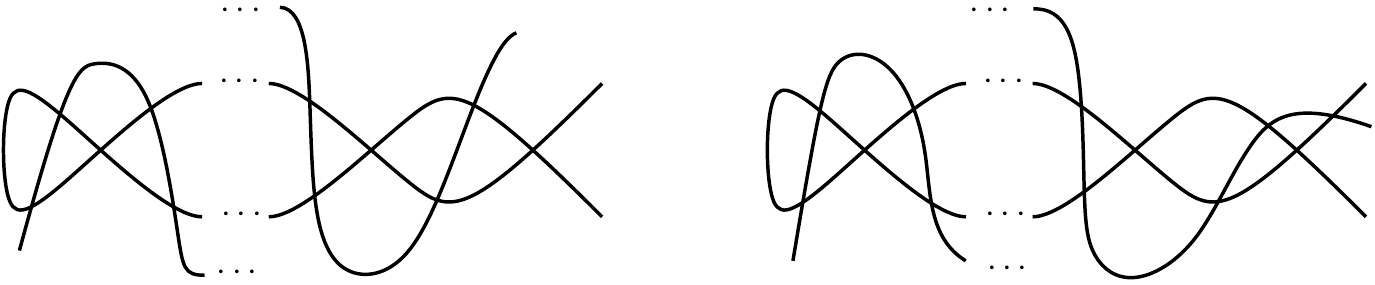}
	\caption{The divide of the singularity $(x+y^d)(x^2-y^{k+1})$. At left, we have depicted the maximal $d$ with $2d<k+1$, and on the right, we depict the maximal intersection multiplicity $d = (k+2)/2$. For smaller $d$, there are simply fewer ``oscillations'' of the smooth branch across the divide for $A_k$.}
	\label{figure:mult3cases}
\end{figure}

Every divide as in \Cref{figure:mult3cases} has a region as shown at left in \Cref{figure:mult32branches}. At right, we have applied an admissible isotopy, completing Step 1. In red, we see a complete subgraph of the intersection graph with two triangles (Step 2). After toggling (\Cref{lemma:toggle}) both triangles, the intersection graph becomes the tripod graph of type $(1,2,6)$, completing the argument. 

\begin{figure}[ht!]
	\includegraphics[scale=1]{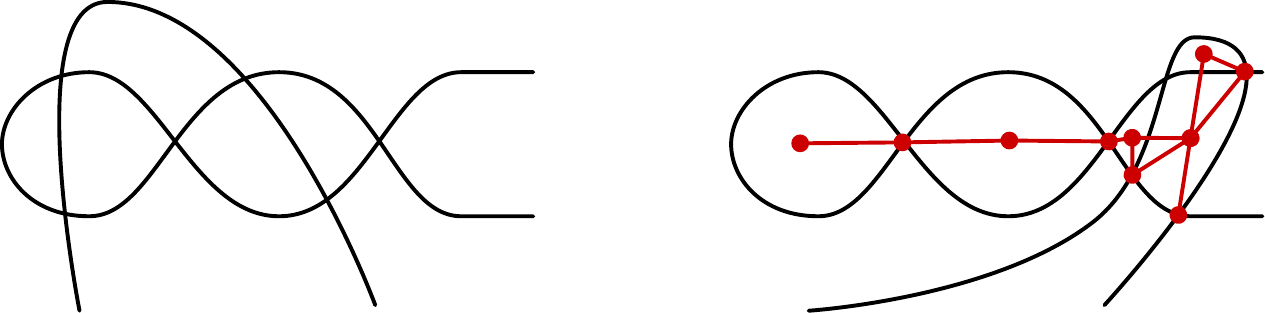}
	\caption{}
	\label{figure:mult32branches}
\end{figure}

\para{Three branches} The last case in multiplicity $3$ is when there are $3$ branches. In this case, all branches must be smooth so the topological type of the singularity is determined by the pairwise intersection multiplicities of the branches. We do a similar analysis to that of \Cref{lem:class_mult3_2branch} to classify the topological types of singularities that appear in this situation.

\begin{lemma}\label{lem:mult3branch3}
	Let $f:\C^2\to \C$ be an isolated plane curve singularity of multiplicity $3$ and assume that $f$ consists of $3$ branches. Then the branches $f_1,f_2$ and $f_3$ are smooth,  at least two of the intersection multiplicities coincide, and the remaining multiplicity is greater or equal than the other two. Moreover, all possible sets of natural numbers $\{\nu_{12},\nu_{13},\nu_{23}\}$ that satisfy the previous conditions can be realized.
\end{lemma}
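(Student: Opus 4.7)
The plan is as follows. Since the multiplicity of a product of convergent power series in $\C\{x,y\}$ equals the sum of the individual multiplicities, the total multiplicity $3$ combined with the hypothesis of three branches forces each branch to have multiplicity $1$, i.e. to be smooth. Because the topological type of such a configuration of smooth branches is determined by the three pairwise intersection multiplicities $\nu_{ij}=\dim_{\C}\C\{x,y\}/(f_i,f_j)$, the remaining task is to determine exactly which unordered triples of positive integers can occur as $\{\nu_{12},\nu_{13},\nu_{23}\}$.

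I would then split into three cases according to how many of the three tangent lines at the origin coincide. If all three tangent lines are distinct, a linear change of coordinates makes all pairs of branches meet transversely, yielding $\{\nu_{12},\nu_{13},\nu_{23}\}=\{1,1,1\}$. If exactly two tangent lines coincide, a rotation aligns the two tangent branches with the $x$-axis and the third branch with the $y$-axis, so that the two ``crossing'' intersection numbers are both $1$, while the remaining one $\nu_{12}=\mathrm{ord}(\phi_1-\phi_2)$ can be prescribed to be any integer $c\ge 2$ by a suitable choice of the parametrizations $t\mapsto(t,\phi_i(t))$ with $\phi_i(t)=O(t^2)$. This realizes $\{1,1,c\}$ for every $c\ge 2$.

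The main obstacle, and the heart of the lemma, is the case where all three tangent lines coincide. Parametrizing each branch as $t\mapsto(t,\phi_i(t))$ with $\phi_i(t)=O(t^2)$, one has $\nu_{ij}=\mathrm{ord}(\phi_i-\phi_j)\ge 2$. To prove that the two smallest $\nu_{ij}$ coincide I would invoke the ultrametric inequality for orders of power series: were $\nu_{12}$ the unique minimum, then the identity $\phi_2-\phi_3=(\phi_2-\phi_1)-(\phi_3-\phi_1)$ would force $\nu_{23}=\nu_{12}$, since the leading term of $\phi_2-\phi_1$ cannot be cancelled by any term of strictly higher order in $\phi_3-\phi_1$; this contradicts $\nu_{12}$ being the unique minimum, so the two smallest must be equal. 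For realization with $2\le a\le b$, the explicit choice $\phi_1=0$, $\phi_2=t^a$, $\phi_3=t^a+t^b$ produces the singularity $y(y-x^a)(y-x^a-x^b)$, whose intersection multiplicities are $\mathrm{ord}(t^a)=a$, $\mathrm{ord}(t^a+t^b)=a$, and $\mathrm{ord}(t^b)=b$, giving $\{\nu_{12},\nu_{13},\nu_{23}\}=\{a,a,b\}$. Together with the previous two cases this exhausts the admissible triples and completes the proof.
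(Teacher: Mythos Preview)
Your proof is correct and complete, but the route differs from the paper's. The paper normalizes two of the branches to the fixed form $x^2-y^{2\nu_{23}}=(x-y^{\nu_{23}})(x+y^{\nu_{23}})$ and then computes $\nu_{12},\nu_{13}$ by substituting the parametrizations $t\mapsto(\pm t^{\nu_{23}},t)$ into the remaining branch $f_1(x,y)=ax+by^k+\cdots$, obtaining the constraint by a case analysis on $k$ versus $\nu_{23}$. For realization it uses $x^3-y^{3\nu_{12}}$ when all three multiplicities coincide and $(x-y^{\nu_{12}})(x^2-y^{2\nu_{23}})$ otherwise.

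Your argument instead organizes the problem by the configuration of tangent lines and, in the key case where all three agree, parametrizes every branch as a graph $t\mapsto(t,\phi_i(t))$ and invokes the non-archimedean triangle inequality for $\mathrm{ord}$ to force the two smallest of $\nu_{ij}=\mathrm{ord}(\phi_i-\phi_j)$ to coincide. This is more conceptual and isolates the essential mechanism (the ultrametric) cleanly; your realization $y(y-x^a)(y-x^a-x^b)$ is also perfectly valid. The paper's approach, while more computational, has the advantage in context that the explicit models $x^3-y^{3m}$ and $(x-y^{\nu_{12}})(x^2-y^{2\nu_{23}})$ are exactly the equations to which Chebyshev morsifications are subsequently applied to build the needed divides; your models would require a small extra translation step before that construction.
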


\begin{proof}
	That each branch is smooth follows from the restriction that $f$ has multiplicity $3$ and that each branch has multiplicity at least $1$.
	
	We now analyze the possible intersection multiplicities between the branches. Take the branches $f_2$ and $f_3$. If their intersection multiplicity is $\nu_{23} \geq 1$, then, up to a change of coordinates, we claim that
	 \[
	 f=f_1(x,y) \cdot (x^2-y^{2\nu_{23}}),
	 \]
	  where $f_1(x,y)$ is a polynomial of order $1$ and $\nu_{23}$ can be any natural number. To see this, we observe that the topological type of a singularity consisting of two smooth branches is determined by the intersection multiplicity of these two branches. 
	  	
	Similar to the argument of \Cref{lem:class_mult3_2branch}, we now analyze the possible intersection multiplicities of $f_2=x-y^{\nu_{23}}$ and $f_3=x+y^{\nu_{23}}$ with $f_1(x,y)$. 
	
	We observe that $f_2= x-y^{\nu_{23}}$  has a parametrization $t \mapsto (t^{\nu_{23}}, t)$ and $f_3=x+y^{\nu_{23}}$ has the parametrization $t \mapsto (-t^{\nu_{23}}, t)$. We write $f_1(x,y)=ax+by^k+\textit{higher order terms}$ for some $k \ge 1$ (since $f_1$ is irreducible, necessarily some monomial of the form $b y^k$ must appear). 
	
	We subdivide the analysis in terms of the possible values for $k$. Throughout, we identify the lowest-order terms in the expressions $f_1(\pm t^{\nu_{23}}, t)$ which determine intersection multiplicities $\nu_{12}, \nu_{13}$. 
	
	\para{Case 1} If $k \ne \nu_{23}$, then $\nu_{12} = \nu_{13} = k$ (if $k < \nu_{23}$) or else $\nu_{12} = \nu_{13} = \nu_{23}$ (if $k > \nu_{23})$. In either case, the trio of multiplicities $\nu_{12}, \nu_{13}, \nu_{23}$ behaves as claimed.
		
	\para{Case 2} If $k=\nu_{23}$, then the coefficient on $t^{\nu_{23}}$ in the substitution $f_1(t^{\nu_{23}}, t)$ (computing the intersection multiplicity $\nu_{12}$) is $a+b$, and is $b-a$ in the substitution $f_1(-t^{\nu_{23}}, t)$ that computes $\nu_{13}$. Thus at least one of these coefficients is nonvanishing, at least one of $\nu_{12}$ or $\nu_{13}$ coincides with $\nu_{23}$, and the remaining one is at least as large.

	It remains to show that any three natural numbers $\nu_{12} = \nu_{13} \le \nu_{23}$ can be realized as multiplicities.
	
	\para{Case (a)} If $\nu_{12}=\nu_{13}=\nu_{23}$ then the singularity \[x^3-y^{3\nu_{12}}\] realizes the multiplicities.

	\para{Case (b)} For $\nu_{12}=\nu_{13} < \nu_{23}$, use \[(x-y^{\nu_{12}})\cdot(x^2-y^{2\nu_{23}}).\]

\end{proof}

Having classified the singularities in this regime, we apply Steps 1-3 of \Cref{section:strategy}. We begin with Step 1, constructing suitable divides. For the case $(a)$, (that is, the singularities of the form $x^3-y^{3m}$), we appeal to the construction given by the corresponding Chebyshev polynomial (recall \Cref{rem:reducible}). The genus of the Milnor fibers of these singularities is $g=3m-2$, and so $g \geq 5$ if and only if $m \geq 3$. Given the intersection diagram as in \Cref{figure:int_diagram_56} it is clear that we only need to check the limit case $x^3-y^9$. In \Cref{figure:case39} we see the intersection diagram (on top) where the graph $\mathcal C_0$ of Step 2 is depicted in red, completing Step 2. After toggling the only triangle it contains (using \Cref{lemma:toggle}), we obtain a tripod graph of type $(1,2,6)$, completing Step 3.

\begin{figure}[ht!]
	\includegraphics[scale=1]{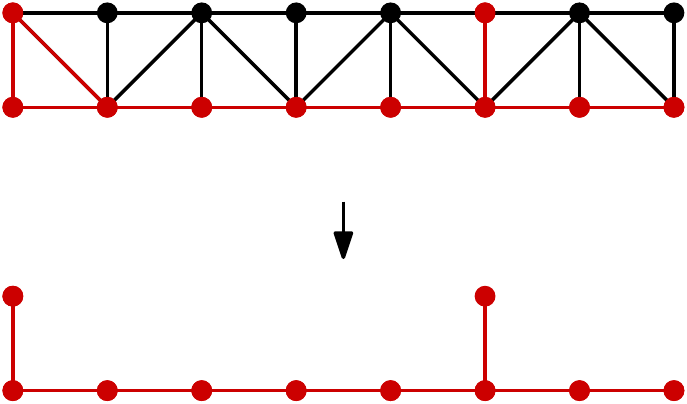}
	\caption{}
	\label{figure:case39}
\end{figure}

 The divides corresponding to singularities as in Case $(b)$ of the proof of \Cref{lem:mult3branch3} (of the form $(x-y^{\nu_{12}})\cdot(x^2-y^{2\nu_{23}})$) are constructed in a very similar way as in the case of two branches (recall \Cref{figure:mult3cases}). First we analyze which of these singularities has genus greater or equal than $5$. Using \cref{eq:milnor_total} we find $g = \nu_{12}+\nu_{13}+\nu_{23}-2$. So, assuming $\nu_{23}>\nu_{12}=\nu_{13}$, we find that  $g \geq 5$ if and only if $2\nu_{12}+\nu_{23} \geq 7$. Since $\nu_{23} > \nu_{12} \geq 1$, we find that, either $\nu_{12}=\nu_{13}=1$ and $\nu_{23} \geq 5$ or else $\nu_{12}=\nu_{13} \ge 2$ and $\nu_{23} \geq 3$. The first possibility is a singularity of type $D_k$ and so is excluded from consideration. In the latter case we apply the Chebyshev morsification of \cref{eq:chebyshev} to each of the branches and place them in generic position to produce a divide (Step 1). In \Cref{figure:mult3branch3cases} we see a portion of an arbitrary such divide after applying a suitable admissible isotopy. The complete subgraph $\mathcal C_0$ of Step 2 is shown in red. After toggling (\Cref{lemma:toggle}) the only triangle, we obtain a tripod graph of type $(1,2,6)$, completing Step 3.
 
 \begin{figure}[ht!]
 	\includegraphics[scale=1]{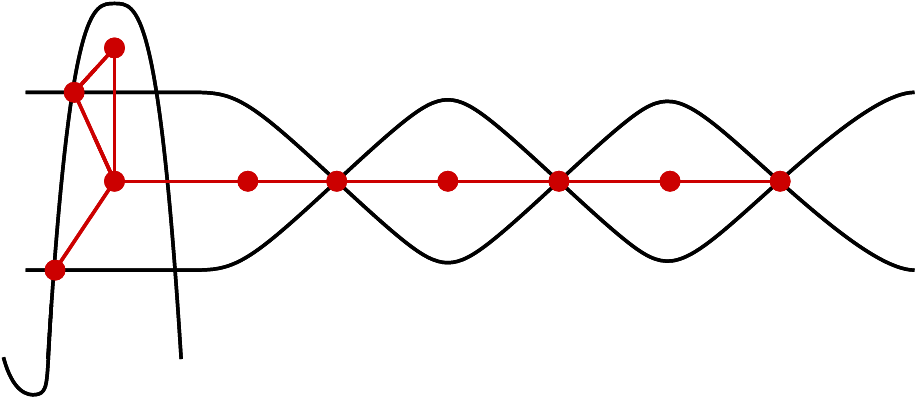}
 	\caption{After using the Chebyshev polynomial construction of the divide on each branch and after a suitable admissible isotopy, this is the divide corresponding to the singularity $(x-y^2)(x^2-y^6)$.}
 	\label{figure:mult3branch3cases}
 \end{figure}

\subsection{Multiplicity $4$}\label{section:m4}
Throughout this section, let $f: \C^2 \to \C$ be an isolated plane curve singularity of multiplicity $4$. Our first objective, corresponding to Step 1 of the outline given in \Cref{section:strategy}, is to analyze the structure of a divide under our standing assumption that the Milnor fiber $\Sigma_f$ has genus at least $5$. By \Cref{lem:divide_with_ordinary}, $f$ admits a divide $\mathcal D$ with an ordinary singularity equivalent to $x^4 + y^4$ at the origin. Arguing as in the proof of \Cref{prop:generating_tree_mult_5}, $\mathcal D$ can be deformed so as to have the structure of four lines in general position, enclosing three bounded regions. Taken as a divide unto itself, the corresponding subsurface has genus $3$ and $4$ boundary components. Thus, in order for $\Sigma_f$ to have genus at least $5$, there must be additional vertices in the intersection graph, corresponding to ordinary double points and enclosed regions in $\mathcal D$. In \Cref{lemma:twomore}, we show that in fact there must be at least two more bounded regions in $\mathcal D$.

\begin{lemma}\label{lemma:twomore}
	Let $\mathcal D$ be a divide associated to an isolated real plane curve singularity $f: \C^2 \to \C$. The genus $g$ of the Milnor fiber $\Sigma_f$ coincides with the number of bounded regions  of $\R^2$ enclosed by $\mathcal D$.
\end{lemma}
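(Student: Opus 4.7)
The plan is to derive $g=r$ from three expressions for the Milnor number $\mu_f$, the crucial one being Euler's formula applied to $\mathcal{D}$ itself as a planar graph.

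Two of the three expressions are immediate. By \Cref{construction:milnorfiber} and \Cref{lemma:fiberproperties}.\ref{item:vertices}, vanishing cycles are in bijection with the bounded vertices of $\Lambda_\mathcal{D}$, which are themselves indexed by double points of $\mathcal{D}$ together with enclosed regions of $\mathcal{D}$; since these distinguished vanishing cycles form a basis of $H_1(\Sigma_f)$, this yields $\mu_f = d + r$, where $d$ is the number of double points. Independently, the standard identity $\mu_f = 2g+b-1$, together with the observation that the number of branches $b$ equals the number $n$ of arc-components of $\mathcal{D}$, gives $\mu_f = 2g+n-1$.

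The heart of the argument is the third expression $r = d - n + 1$, obtained from Euler's formula for $\mathcal{D}$ in $\R^2$. Treating the $2n$ boundary endpoints as valence-one vertices and the $d$ double points as valence-four vertices, one counts $V = d+2n$ and, using $\sum_i k_i = 2d$ (each double point is visited exactly twice by arc-strands across all arcs), $E = 2d+n$. By \Cref{lem:properties_divides_singularities}.\ref{prop:iii}, any two branches of $\mathcal{D}$ share a double point, so $\mathcal{D}$ is connected as a graph, and Euler's formula in $S^2$ gives $F = d-n+2$. The unique unbounded face in $\R^2$ consists of the exterior of $D^2$ together with every region inside $D^2$ touching $\partial D^2$, since all of these components are joined to the exterior across the gaps in $\partial D^2\setminus\mathcal{D}$. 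The remaining $F-1 = d-n+1$ bounded faces are therefore exactly the enclosed regions, giving $r = d-n+1$.

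Combining the three expressions, $d+r = \mu_f = 2g+n-1$ together with $d = r+n-1$ forces $g=r$. No step poses a real obstacle; the only point requiring any care is the identification of the bounded $S^2$-faces of $\mathcal{D}$ with the enclosed regions in $\R^2$, and this reduces to the elementary observation that the components of $D^2 \setminus \mathcal{D}$ touching $\partial D^2$ are joined to the exterior of $D^2$ through the gaps in $\partial D^2 \setminus \mathcal{D}$.
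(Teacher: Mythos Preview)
Your proof is correct and takes a genuinely different route from the paper's. The paper uses three ingredients: the Betti-number identity $\mu = 2g + b - 1$, the divide identity $\mu = r + \delta$ (your $d + r$), and Milnor's classical formula $\mu = 2\delta - b + 1$ relating the Milnor number, the $\delta$-invariant, and the number of branches; a line of algebra then yields $g = r$. You replace the appeal to Milnor's formula with a direct Euler-characteristic computation on the divide as a planar graph, deriving $r = d - n + 1$ from $V - E + F = 2$. Since Milnor's formula is, given the other two identities, equivalent to exactly this relation, the two arguments are logically interchangeable; but yours is more self-contained, staying entirely within the combinatorics of the divide and avoiding a nontrivial citation from singularity theory. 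The paper's version is shorter to write down but leans on a heavier black box.
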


\begin{proof}
	Let $\mu$ be the Milnor number of $f$, which coincides with the first Betti number of the Milnor fiber. Let $b$ be the number of branches of $f$, which coincides with the number of boundary components of its Milnor fiber. Let $\delta$ be the {\em delta invariant} associated with $f$, i.e. the number of double points appearing in any generic divide $\calD$ associated with $f$. Finally, let $r$ denote the number of bounded regions enclosed by $\calD$.
	
	We have the following three formulas (the first simply expresses $\mu$ as the first Betti number of the fiber, the second computes $\mu$ from the divide and the third is well-known;  see \cite{Mil}).
	\begin{equation}\label{eq:milnor_betti}
	\frac{\mu  - b + 1}{2} = g
	\end{equation}
	\begin{equation}\label{eq:milnor_divide}
	\mu = r + \delta
	\end{equation}
	\begin{equation}\label{eq:delta_inv}
	\mu = 2\delta - b + 1.
	\end{equation}
	From \cref{eq:delta_inv} and \cref{eq:milnor_betti}, we find $g = \delta - b + 1 = \mu - \delta$, and by \cref{eq:milnor_divide}, $\mu-\delta = r$ as required. 	
\end{proof}

We now see that up to admissible homotopy, there are only five cases to consider. 

\begin{figure}[ht!]
	\labellist
	\tiny
	\pinlabel $5a$ at 10 15
	\pinlabel $5b$ at 30 10
	\pinlabel $0$ at 40 47
	\pinlabel $1$ at 25 75
	\pinlabel $2$ at 10 97
	\pinlabel $3$ at 12 115
	\pinlabel $4$ at 40 110
	\endlabellist
	\includegraphics[scale=1]{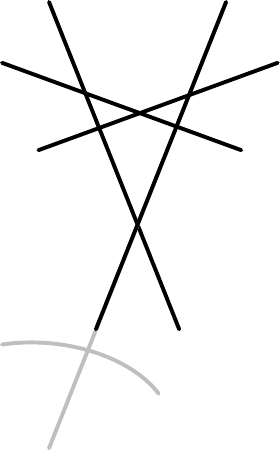}
	\caption{The possible arrangements of the two extra regions for multiplicity $4$. Every such singularity has a divide including the four lines in general position and such that the region labeled $0$ is enclosed. Additionally, at least one of the regions labeled $1$ through $5b$ is enclosed.}
	\label{figure:mult4cases}
\end{figure}

\begin{lemma}\label{lemma:mult4cases}
	Let $f: \C^2 \to \C$ be an isolated plane curve singularity of multiplicity $4$ such that $g(\Sigma_f) \ge 5$. Then $f$ admits a divide $\mathcal D$ containing a region of one of the forms $1$--$5$ as indicated in \Cref{figure:mult4cases}. 
\end{lemma}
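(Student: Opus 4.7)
The plan is as follows. First apply \Cref{lem:divide_with_ordinary} to obtain a divide $\tilde{\mathcal D}$ for $f$ with ordinary singularities, one of which---located at the origin---is equivalent to $x^4+y^4$ since $f$ has multiplicity $4$. Then, arguing exactly as in the proof of \Cref{prop:generating_tree_mult_5}, perturb this ordinary singularity to four lines in general position, using \Cref{lem:independence_deformations} to ensure that no other portion of the divide is disturbed. The resulting divide $\mathcal D$ restricts, near the origin, to four lines in general position, which enclose three bounded regions---including in particular the central region labeled $0$ in \Cref{figure:mult4cases}.

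Next, invoke \Cref{lemma:twomore}: since $g(\Sigma_f)\ge 5$, the divide $\mathcal D$ must enclose at least five bounded regions in total. Three of these are accounted for by the central four-line arrangement, so at least two additional bounded regions must appear elsewhere in $\mathcal D$, arising either from enclosures formed by the extensions of the four branches or from perturbations of other ordinary singularities of $\tilde{\mathcal D}$.

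Finally, use admissible homotopies (as in \Cref{figure:admissible_homotopy}) to bring at least one of these extra bounded regions into standard position adjacent to the central arrangement. Any bounded region of $\mathcal D$ is bordered by a cycle of arcs and double points, and the admissible move of sliding an arc across a double point permits us to deform such a region inward through successive intermediate double points until its boundary abuts the four-line configuration. The finitely many possible ways an extra region can attach to the boundary of the central arrangement---sorted by whether the new region shares a vertex, a single edge, or a pair of adjacent edges of the four-line picture, and on which side of the central cluster it sits---give exactly the configurations $1$--$5\mathrm b$ depicted in \Cref{figure:mult4cases}.

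The main obstacle is the verification that the list $1$--$5\mathrm b$ is exhaustive: one must check, by a local analysis in a neighborhood of the four-line arrangement, that after admissible simplification every attachment pattern of an extra enclosed region matches one of the six depicted cases, and in particular that the bifurcation of case $5$ into $5\mathrm a$ and $5\mathrm b$ records the only two essentially distinct ways such a region can meet two consecutive ``outer'' triangles of the central four-line configuration. This is a finite combinatorial check on local pictures and does not require any further input beyond the admissible-homotopy moves already introduced.
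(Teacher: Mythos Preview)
Your proposal has the right opening moves (invoke \Cref{lem:divide_with_ordinary}, deform the ordinary point of order $4$ to four lines in general position, and invoke \Cref{lemma:twomore} to produce two extra enclosed regions), but the remainder diverges from the paper's argument and contains two genuine gaps.

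First, you attempt to bring an arbitrary extra bounded region into position adjacent to the four-line core by ``sliding it inward'' through successive admissible homotopies. This is not how the paper proceeds, and your claim is not justified: an admissible homotopy slides a \emph{segment} across a double point, and it is not clear that an arbitrary bounded region elsewhere in $\mathcal D$ can be transported to abut the central configuration without destroying or merging regions along the way. The paper avoids this issue entirely by invoking \Cref{lem:properties_divides_singularities}.\ref{prop:i}: since the intersection diagram $\Lambda_{\mathcal D}$ is connected, at least one of the two extra regions is \emph{already} adjacent to the three regions enclosed by the four lines. No homotopy is needed to force adjacency.

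Second, and more seriously, you are missing the key reduction step. There are \emph{eight} local regions adjacent to the central configuration, so a priori there are many possible placements of the extra region(s). The paper observes (via the move shown in \Cref{figure:rotation}) that an admissible homotopy of one of the four lines effects a \emph{rotation} of the four-line picture relative to the surrounding regions. This rotation symmetry is what collapses the eight positions down to the small list in \Cref{figure:mult4cases}: one adjacent region may always be placed at position~$0$, and the second is then classified by the number of empty adjacent slots separating it from~$0$ (cases $1$--$4$), or, if only one of the eight adjacent slots is filled, by how the next region attaches to region~$0$ (cases $5a$, $5b$). Your description of the cases in terms of ``sharing a vertex, a single edge, or a pair of adjacent edges'' does not match this classification and does not by itself give the enumeration $1$--$5b$.
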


\begin{proof}
	By \Cref{lem:properties_divides_singularities}.\ref{prop:i}, the two additional enclosed regions guaranteed by \Cref{lemma:twomore} must be placed in the divide in such a way that the intersection diagram $\Lambda_\mathcal D$ is connected. There are eight local regions adjacent to the three regions enclosed by the four lines in general position; see \Cref{figure:mult4cases}. Thus at least one of these regions must be enclosed. There are two basic regimes: either exactly one such region is enclosed, or else at least two are. 
	
	\begin{figure}[ht!]
		\includegraphics[scale=1]{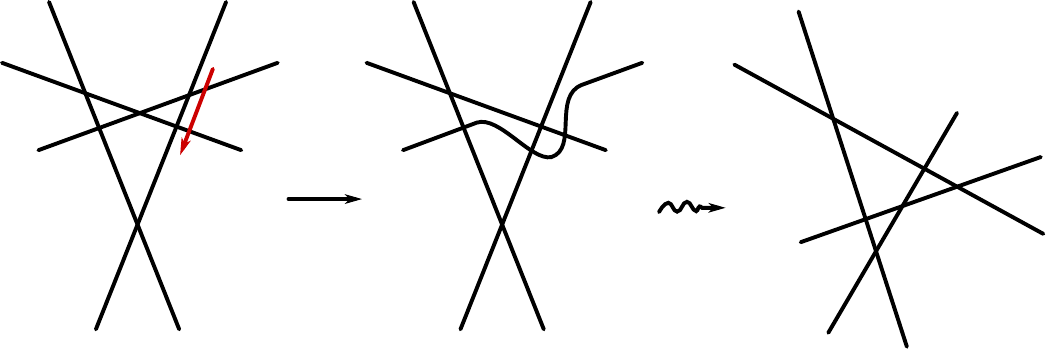}
		\caption{An admissible homotopy effects a rotation of the position of the four lines relative to the adjacent regions.}
		\label{figure:rotation}
	\end{figure}

	In either case, we claim that two configurations are related by admissible homotopies if the positions of the adjacent regions are equivalent up to a rotation of the configuration of four lines. This follows directly from \Cref{figure:rotation}, which shows that a single admissible homotopy of one of the four lines amounts to a rotation through three of the eight adjacent positions. Thus, a configuration with two adjacent enclosed regions is determined entirely by the number of empty regions separating the two enclosed regions; there are four such possibilities. Additionally, any two configurations with exactly one adjacent enclosed region are related by a sequence of admissible homotopies, leading to a fifth case to consider. There are two variants $5a$ and $5b$, depending on whether the fifth adjacent region shares an edge or only a vertex with the fourth.
\end{proof}


For some of the cases below, we will require a slight additional argument to complete Step 1; we will make some further observations that supply us with additional vanishing cycles. Once this is taken care of, the remainder of Steps 2 and 3 proceed as in the multiplicity $3$ case.

\para{Case 1}
\begin{figure}[ht!]
	\labellist
	\small
	\pinlabel $(a)$ at 5 120
	\pinlabel $(b)$ at 150 120
	\pinlabel $(c)$ at -8 5
	\pinlabel $(d)$ at 82 5
	\pinlabel $(e)$ at 172 5
	\tiny
	\pinlabel $v$ at 25 80
	\pinlabel $w$ at 33 23
	\endlabellist
	\includegraphics[scale=1]{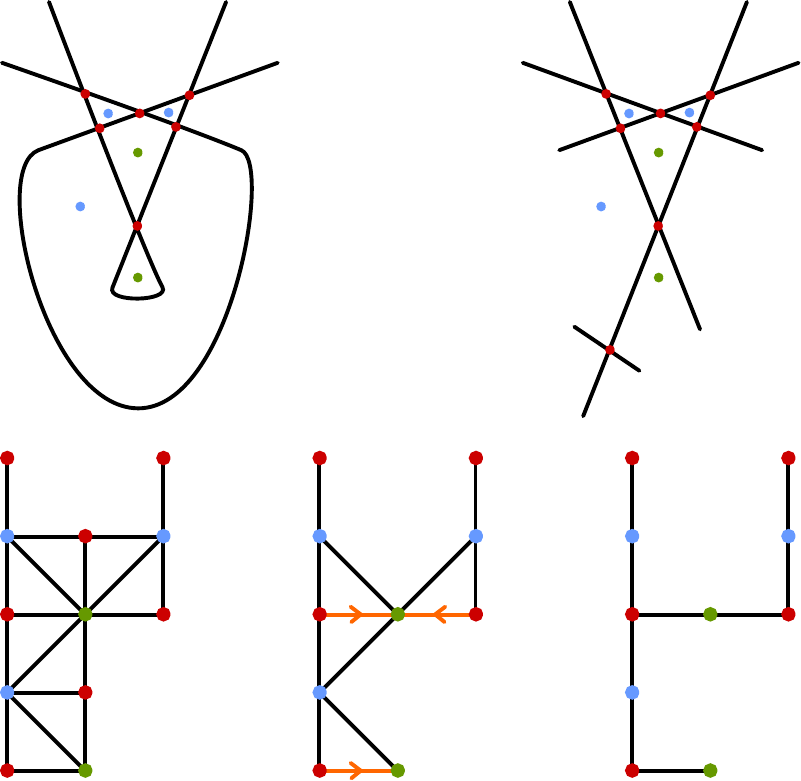}
	\caption{Case 1}
	\label{figure:case1}
\end{figure}
We refer to \Cref{figure:case1} throughout. The divides labeled $(a)$ and $(b)$ in the top half of the figure indicate two distinct possibilities. Either both of the extra enclosed regions has no additional edges, as shown in $(a)$, or else one of the regions has some additional vertex on its boundary. Without loss of generality (c.f. \Cref{figure:rotation}), we can assume that in this case the figure appears as in $(b)$. 

Assume first that the a portion of the divide appears as in \Cref{figure:case1} (a). We divide this in two more subcases.

\para{Subcase (a)'} The whole divide of the singularity is as in $(a)$ - there are no extra enclosed regions or double points apart from those in $(a)$. Then we observe that the singularity consists of two branches, each of them topologically equivalent to $x^2-y^3$ and with intersection multiplicity between the branches equal to $4$ (these are the number of intersection points between the two nodes in the picture). We find that in this case the singularity is topologically equivalent to $(x^2-y^3)(y^2-x^3)$. Using \cref{eq:chebyshev} we can produce a divide for each of the branches and find that the divide of \Cref{figure:dividedoublecusp} is a divide for the singularity and, furthermore, this divide falls under the purview of Case $2$ in the scheme of \Cref{lemma:mult4cases}.

\begin{figure}[ht!]
	\includegraphics[scale=1]{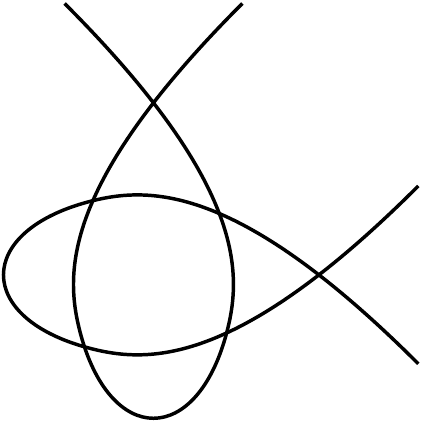}
	\caption{A divide for the singularity $(x^2-y^3)(y^2-x^3)$.}
	\label{figure:dividedoublecusp}
\end{figure}

\para{Subcase (a)''} Suppose that $(a)$ is not the whole divide of the singularity. Then there is some other region adjacent to the divide depicted in $(a)$ that is enclosed by the divide of the singularity. This case is then covered by one of the other cases.

Assume now the divide appears as in \Cref{figure:case1}.(b). The intersection diagram is shown in $(c)$, along with the distinguished vertices $v$ and $w$. In $(d)$, we show the portion of the intersection diagram remaining after ignoring the vertices $v$ and $w$; this is the subgraph $\mathcal C_0$ of Step 2. The highlighted edges in $(d)$ indicate the toggling moves to be performed for Step 3 (c.f. \Cref{remark:unoriented}). The result after toggling is shown in $(e)$. This is the $(2,3,4)$ tripod graph, which determines an $E$-admissible spanning configuration by \Cref{lemma:tripods} and completes Step 3 in this case.

\para{Case 2}
\begin{figure}[ht!]
	\labellist
	\small
	\pinlabel $(a)$ at 15 5
	\pinlabel $(b)$ at 105 5
	\pinlabel $(c)$ at 195 5
	\pinlabel $(d)$ at 285 5
	\tiny
	\pinlabel $v$ at 130 55
	\endlabellist
	\includegraphics[scale=1]{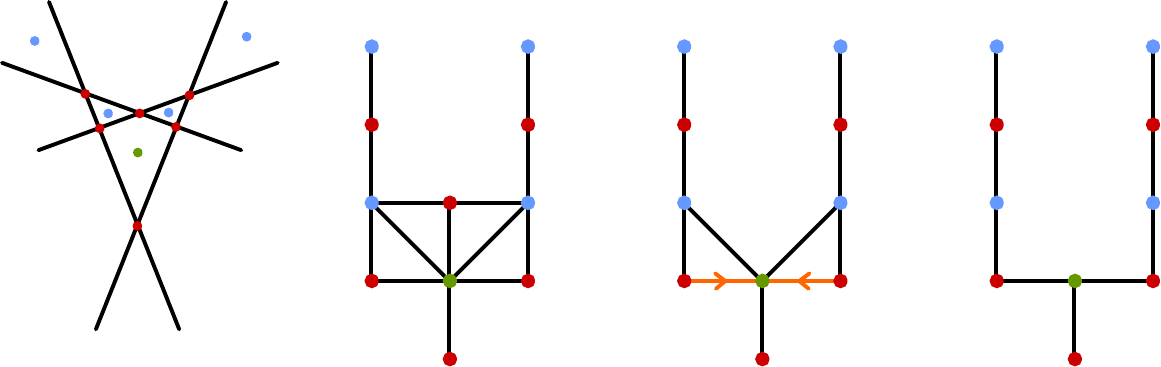}
	\caption{Case 2}
	\label{figure:case2}
\end{figure}
We refer to \Cref{figure:case2} throughout. For this case, it is simpler to place the bounded regions as shown in $(a)$; we are free to do this by the argument of \Cref{figure:rotation}. The intersection diagram is shown in $(b)$, along with the distinguished vertex $v$. In $(c)$, we show the portion of the intersection diagram remaining after ignoring $v$. The highlighted edges in $(c)$ indicate the toggling moves to be performed. The result is shown in $(e)$. This is the $(1,4,4)$ tripod graph. By \Cref{lemma:tripods}, this completes Step 3.

\para{Case 3}
\begin{figure}[ht!]
	\labellist
	\endlabellist
	\includegraphics[scale=1]{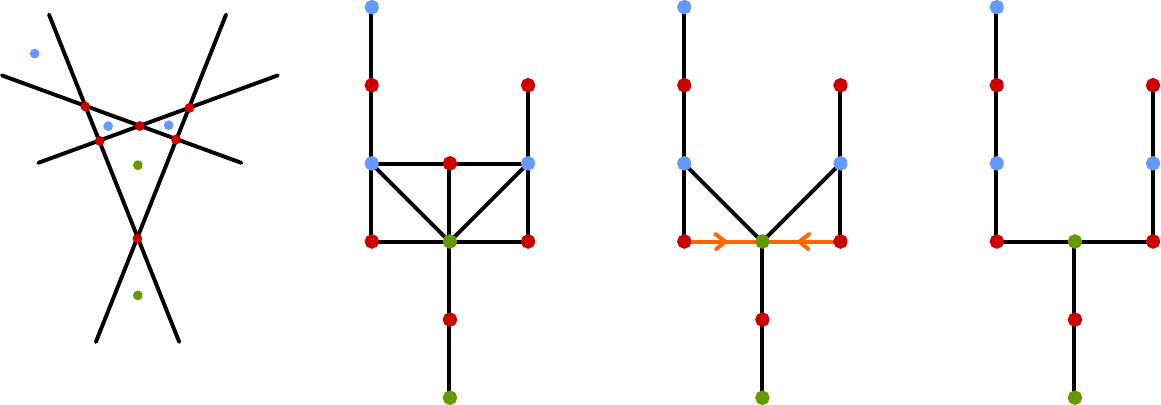}
	\caption{Case 3}
	\label{figure:case3}
\end{figure}
Refer to \Cref{figure:case3}. The argument proceeds exactly as in the previous two cases. 

\para{Case 4}
\begin{figure}[ht!]
	\labellist
	\endlabellist
	\includegraphics[scale=1]{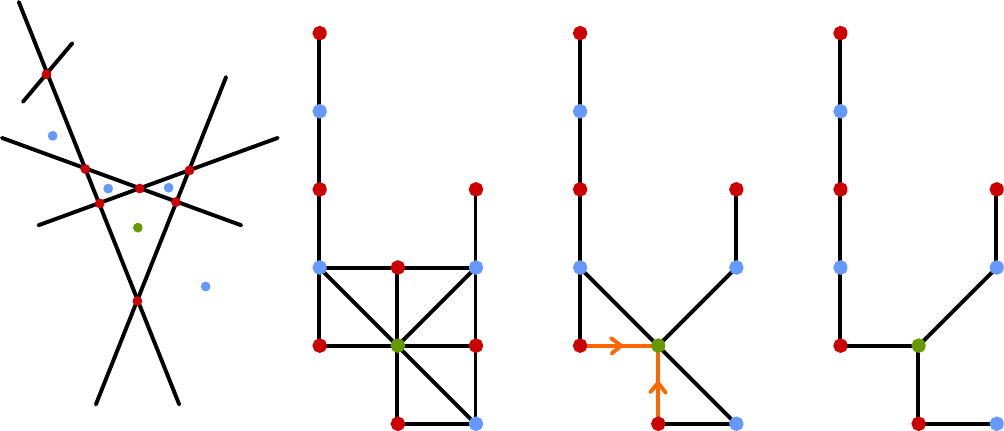}
	\caption{Case 4}
	\label{figure:case4}
\end{figure}
We refer to \Cref{figure:case4}. As in Case 2, we have chosen a different (but rotationally-equivalent) set of enclosed regions as compared to \Cref{lemma:mult4cases}. As we have depicted in the divide, there must be some additional edge crossing one of the two additional enclosed regions (otherwise, the divide would contain an immersed circle, in which case \Cref{lemma:twomore} does not apply, and the associated surface would only have genus $4$). The remainder of the argument proceeds as in the previous cases, terminating with a tripod graph of type $(2,2,5)$ which also falls under the purview of \Cref{lemma:tripods} and hence completes Step 3.

\para{Case 5}
\begin{figure}[ht!]
	\labellist
	\endlabellist
	\includegraphics[scale=1]{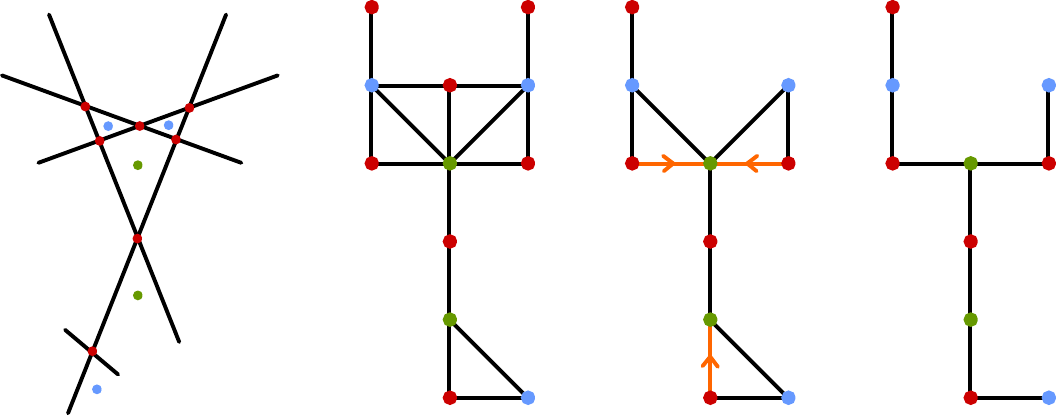}
	\caption{Case 5}
	\label{figure:case5}
\end{figure}
Finally we analyze Case 5 in \Cref{figure:case5}. There we have depicted the case labeled as 5b in \Cref{figure:mult4cases}. The analysis for case 5a is identical except that the bottom three vertices no longer form a complete triangle and there is one fewer toggling move required.

	\bibliographystyle{alpha}
	\bibliography{bibliography}
	
\end{document}